\definecolor{marin}{rgb}{0.,0.3,0.7}
\journalname{}
\date{ \phantom{b} \vspace{45mm}\phantom{e}}
\def\iu{{\rm i}}
\def\e{{\rm e}}
\def\d{{\rm d}}
\def\Z{{\mathbb Z}}
\def\N{{\mathbb N}}
\def\real{{\mathbb R}}
\def\eps{\varepsilon}
\def\bigo{{\mathcal O}}
\newcommand\calI{{\cal I}}
\newcommand\calK{{\cal K}}
\newcommand\calU{{\cal U}}
\newcommand\calZ{{\cal Z}}
\newcommand\bfj{{\mathbf j}}
\newcommand\bfk{{\mathbf k}}
\newcommand\bfx{{\mathbf x}}
\newcommand\bfy{{\mathbf y}}
\newcommand\bfz{{\mathbf z}}
\newcommand\bfZ{{\mathbf Z}}
\newcommand\bfomega{{\boldsymbol \omega}}
\newcommand\bfzero{{\mathbf 0}}
\newcommand\jvec{{\langle \bfj \rangle}}
\newcommand\const{{\hbox{Const}}}
\newcommand\trn{{\|\hskip-1pt|}}
\begin{document}

\title{Long-term analysis of semilinear wave equations with slowly varying wave speed
\thanks{This work has been supported by the Fonds National Suisse,
Project No.~200020-144313/1, and by Deutsche Forschungsgemeinschaft, SFB 1173 and project GA 2073/2-1.
}}
\titlerunning{Long-term analysis of semilinear wave equations with slowly varying wave speed}

\author{Ludwig Gauckler \and Ernst Hairer \and Christian Lubich}
\institute{
Ludwig Gauckler \at Institut f\"ur Mathematik, TU Berlin, Stra\ss e des 17.\,Juni 136,
D-10623 Berlin, Germany. \\
\email{gauckler@math.tu-berlin.de} 
\and
Ernst Hairer \at Section de math\'ematiques, 2-4 rue du Li\`evre,
Universit\'e de Gen\`eve, CH-1211
Gen\`eve 4,
Switzerland. \\
\email{Ernst.Hairer@unige.ch} 
\and
Christian Lubich \at Mathematisches Institut, Universit\"at T\"ubingen, Auf der Morgenstelle,
  D-72076 T\"ubingen, Germany.\\
  \email{Lubich@na.uni-tuebingen.de}}

\authorrunning{L. Gauckler, E. Hairer, and Ch. Lubich}

\date{}

\maketitle
\begin{abstract} A semilinear wave equation with slowly varying wave speed  is considered in one to three space dimensions on a bounded interval,  a rectangle or a box, respectively. It is shown that the action, which is the harmonic energy divided by the wave speed and multiplied with the diameter of the spatial domain, is an adiabatic invariant: it remains nearly conserved over long times,
 longer than any fixed power of the time scale of changes in the wave speed in the case of one space dimension, and longer than can be attained by standard perturbation arguments in the two- and three-dimensional cases. The long-time near-conservation of the action yields long-time existence of the solution. The proofs use modulated Fourier expansions in time.

\keywords{Semilinear wave equation  \and Adiabatic invariant \and Long-time existence \and Modulated Fourier expansion}
\subclass{35L70 \and 37K40 \and 35A01 \and 70H11}
\end{abstract}

\section{Introduction}
\label{sect:intro}
We consider semilinear wave equations $\partial_t^2 u = c(\eps t)^2 \Delta u + g(u)$ on a bounded spatial interval, a rectangle or a rectangular box of diameter $\ell$ with Dirichlet boundary conditions. 
The wave speed $c$ is slowly varying as $c(\eps t)$ with a small parameter $0<\eps\ll 1$. The nonlinearity $g(u)$ is cubic at $u=0$, and the small initial data  are assumed to have an energy of size $\bigo(\eps^2)$. We show long-time near-conservation of the  harmonic energy divided by the wave speed. 
Multiplied with the diameter $\ell$, this almost-conserved quantity is invariant under rescaling the spatial domain and has the physical dimension of an action,
$$
I = \frac \ell{2c}\bigl( \| \partial_t u \|_{L_2}^2 + c^2 \| \nabla_x u \|_{L_2}^2 \bigr) .
$$
The action dominates the square of the $H^1_0\times L_2$ norm of the solution $(u,\partial_t u)$. Its long-time near-conservation therefore yields long-time existence of the solution in  $H^1_0\times L_2$.

We here encounter  a situation with
\begin{itemize}
 \item a time-dependent principal operator $c(\eps t)^2\Delta$ where $c$ can vary in any given bounded interval that is bounded away from $0$,
  \item no conserved energy,
 \item fully resonant frequencies $j\pi c/\ell$ for $j=1,2,3,\dots$ in the one-dimensional case,
 \item an impenetrable thicket of resonances, almost-resonances and non-reso\-nan\-ces among the frequencies in higher dimensions.
\end{itemize}

We obtain near-conservation of the action over times $t\le \eps^{-N}$ for arbitrary ${N\ge 1}$  in the one-dimensional case (Theorem~\ref{thm:main-t}), and over times $t \le C_N\,\eps^{-3+1/N}$ for arbitrary ${N\ge 1}$  in the two- and three-dimensional cases (Theorem \ref{thm:main-t-3}).

On the one hand, our results can be viewed as an extension to a class of nonlinear wave equations of the classical adiabatic theorem, which states that a harmonic oscillator with a slowly varying frequency has the action (i.e., energy divided by the frequency) as an almost-conserved quantity over long times; see, e.g., \cite{Henrard1993} and \cite[Section 6.4]{ArnoldKozlovNeishtadt2006}.

On the other hand, our results are related to the recent literature on the long-time behaviour of nonlinear wave equations on bounded domains \cite{Bambusi2003,Bambusi2007,Bambusi2006,Bourgain1996,Bourgain2000,Cohen2008,Delort2009,Delort2009a,Delort2012,Delort2015,Delort2004,Delort2006,Gauckler2012}.
 
The tool for proving the results is a modulated Fourier expansion in time (MFE), which has previously been used in the long-term analysis of nonlinear wave equations in \cite{Cohen2008,Gauckler2012}; see also \cite{Hairer2013} for a review of MFE. The version of MFE used here is that for varying frequencies, which was  developed in \cite{Hairer2016,Sigg2009}. In this approach we do not use the canonical transformations of Hamiltonian perturbation theory, which should transform the system to a form from which the dynamical properties can be read off. With the MFE, we instead embed the system into a larger modulation system having
almost-invariants that allow us to infer the desired long-time properties.

As a referee suggests, a different approach to prove Theorem
\ref{thm:main-t} for the one-dimensional case might be to transform the considered
wave equation in the spirit of Neishtadt using action-angle variables, see, e.g.,
\cite[Chapter 6]{ArnoldKozlovNeishtadt2006} (in particular Propositions 6.3 and 6.7, which refer to a single-frequency finite-dimensional system), and then to apply an
abstract normal form result of Bambusi \& Giorgilli or a suitably adapted variant
thereof, see
\cite{BambusiGiorgilli1993,BambusiNekhoroshev1998,Bambusi1999}. The
conceptually different approach via modulated Fourier expansions that we
take here is self-contained and seems to be technically not more
complicated. This approach is the same for both, the one-dimensional 
case of Theorem \ref{thm:main-t} and the higher-dimensional cases of
Theorem \ref{thm:main-t-3}. The versatility of the approach manifests
itself also in the fact that it can be used to study related problems
for numerical discretizations; see, e.g., \cite{Hairer2013,Hairer2016}.

Our results are reminiscent of long-term results that are obtained from averaging and normal forms in other situations; see, e.g., \cite{Bambusi2003,Bambusi2005}. However, all such results for partial differential equations known to us invoke some resonance or nonresonance conditions. In the higher-dimensional case considered here, such conditions cannot be expected to be satisfied, and our Theorem~\ref{thm:main-t-3} addresses the question as to which time scales can be covered without {\it any} resonance or nonresonance condition. Theorem~\ref{thm:main-t-3}  shows that the attainable time scale by considering the interaction of any $N+1$ frequencies is a factor $\eps^{-1+1/N}$ longer than could be reached by approaches that do not take interactions between different frequencies into account. While this result is proved here using MFE, it is conceivable 
that it could also be proved using averaging and normal form techniques, but such a proof cannot be expected to be technically simpler.

Long-time almost-conservation results for finite-dimensional Hamiltonian systems without any nonresonance conditions are given in
\cite{Gauckler2013} and \cite{Bambusi2013}, with different proofs by MFE and by canonical transformations, respectively. Those results and techniques depend, however, heavily on the number of different frequencies and can therefore not be directly extended to partial differential equations. We further refer to \cite{Faou2013}, where a long-time stability result for plane waves in nonlinear Schr\"odinger equations on a torus is given with two different proofs, one using Birkhoff normal forms and  one using MFE. A detailed study of the relations between these two approaches to long-time results for Hamiltonian partial differential equations would certainly be of interest, but this is beyond the scope of this paper.

In Section 2 we give the precise formulation of the problem and state our main results. 
The proof of the result for the one-dimensional wave equation is given in Sections 3 and 4, that for two and three space dimensions in Sections 5 and 6. 

\section{Problem formulation and statement of the main results}

We consider the non-autonomous semilinear  wave equation on a  $d$-dimensional rectangular domain $Q=\prod_{i=1}^d (0,\ell_i)$, for $d\le 3$, with homogeneous Dirichlet boundary conditions: for $u=u(x,t)$ with $u=0$ on $\partial Q\times [0,T]$,
\begin{equation}\label{wave}
\partial_{t}^2 u = c(\eps t)^2 \Delta u + g(u,\eps t), \qquad x\in Q,\ t\ge 0,
\end{equation}
with a small parameter $0<\eps\ll 1$. The wave speed $c(\tau)$ is  assumed to be a smooth function of $\tau$ such that $c$ and all its derivatives are bounded for $\tau\ge 0$, and $c(\tau)\ge c_0 >0$. We consider this equation with small initial data satisfying
\begin{equation}\label{init}
\| \nabla_x u(\cdot,0)\| = \bigo(\eps), \quad \| \partial_t u(\cdot,0)\| = \bigo(\eps),
\end{equation}
where $\|\cdot\|$ denotes the $L_2(Q)^d$ or $L_2(Q)$ norm. For the nonlinearity we assume that it admits an expansion
$$
g(u,\tau) = \sum_{m\ge 1} a_m(\tau) u^{2m+1}
$$
such that the series and all its partial derivatives with respect to $\tau$ converge uniformly in $\tau$ for $|u|\le r$ with $r>0$ independent of $\eps$. For ease of presentation, we restrict our analysis to the case
$$
g(u,\tau)=a(\tau) u^3
$$
with a smooth coefficient function $a$ that is bounded for $\tau\ge 0$, as are all its derivatives. This particular nonlinearity shows all the difficulties present in the more general case.

We will show the following long-time existence results, which rely on the near-conservation of the harmonic energy divided by the wave speed,
\begin{equation}\label{action}
I(t) = \frac1{2c(\eps t)}\Bigl(\|\partial_t u(\cdot,t)\|^2 + c(\eps t)^2 \|\nabla_x u(\cdot,t)\|^2 \Bigr).
\end{equation}
Note that for initial values satisfying \eqref{init}, $I(0)=\bigo(\eps^2)$.

\begin{theorem}[One-dimensional case]
\label{thm:main-t} 
Consider the one-dimensional nonlinear wave equation \eqref{wave} with slowly time-dependent wave speed, with homogeneous Dirichlet boundary conditions and initial values satisfying \eqref{init}. Fix the integer $N\ge 1$ arbitrarily.
Under the above conditions, there exists $\eps_N>0$ such that for $\eps\le\eps_N$, the problem admits a solution $(u(\cdot,t),\partial_t u(\cdot,t))\in H^1_0(Q)\times L_2(Q)$ over long times $t\le \eps^{-N}$, and $I$
is an adiabatic invariant:
$$
|I(t)-I(0)|\le C_N\eps^3 \quad\hbox{ for }\ t\le \eps^{-N},
$$
with  $C_N$ independent of $\eps\le\eps_N$ and $t\le \eps^{-N}$.
\end{theorem}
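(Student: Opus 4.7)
The plan is to work in the sine basis on $(0,\ell)$, writing $u(x,t)=\sum_{j\ge 1} q_j(t)\,\sin(j\pi x/\ell)$ so that each modal coefficient satisfies
$$
\ddot q_j + \omega_j(\eps t)^2\, q_j = f_j(q,\eps t), \qquad \omega_j(\tau)=\frac{j\pi c(\tau)}{\ell},
$$
where $f_j$ is the $j$-th sine-coefficient of $-a(\eps t)u^3$. In these coordinates the action decomposes as $I(t)=\sum_j I_j(t)$ with $I_j=\frac1{2\omega_j}(\dot q_j^2+\omega_j^2 q_j^2)$, and the Dirichlet frequencies are fully resonant, $\omega_j=j\omega_1$. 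Thus the resonance module $\calK=\{\bfk\in\Z^{\N}:\,\sum_\ell \ell\,k_\ell=\pm j\ \text{for some $j$}\}$ is infinite at each mode, which makes standard small-divisor techniques useless and motivates the modulated Fourier expansion (MFE) framework of \cite{Hairer2016,Sigg2009}.

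The second step is to set up the MFE with time-dependent phases. I would seek an ansatz
$$
q_j(t)\approx \tilde q_j(t)=\sum_{\bfk\in\calK}z_j^{\bfk}(\eps t)\,\e^{\iu\Phi_\bfk(t)},\qquad \Phi_\bfk(t)=\sum_\ell k_\ell \int_0^t \omega_\ell(\eps s)\,\d s,
$$
and substitute into the modal equation. Matching coefficients of each oscillatory factor gives the \emph{modulation system}: for non-resonant $\bfk$ (i.e.\ $|\bfk\cdot\bfomega|\neq\omega_j$) the modulation $z_j^{\bfk}$ is determined algebraically by lower-order modulations, while the resonant components ($\bfk\cdot\bfomega=\pm\omega_j$) satisfy first-order ODEs in the slow time $\tau=\eps t$, obtained by matching the $\bigo(\eps)$ terms after two chain-rule differentiations. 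The modulation functions are constructed iteratively in powers of $\eps$, truncated at an order $N^\ast=N^\ast(N)$ to be chosen. Standard fixed-point arguments in a suitable weighted Sobolev-type norm (encoding the $H^1_0$ regularity of $u$) give existence of the modulations on a short slow-time interval $\tau\in[0,1]$, together with a defect bound $\|d\|\le C\eps^{N^\ast+1}$ when the truncated MFE is plugged back into the wave equation.

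The third step is to extract almost-invariants from the modulation system. The original non-autonomous wave equation is not Hamiltonian in $t$, but the augmented modulation system inherits an $S^1$ phase-invariance $z_j^{\bfk}\mapsto \e^{\iu k_j\theta}z_j^{\bfk}$ for each $j$ (together with the resonance constraint), and by Noether's theorem this yields a formal invariant $\calI_j[\bfz]$ whose leading part equals $I_j$ along the MFE. Summing over $j$ (with the correct Sobolev weights) gives an almost-invariant $\calI[\bfz]$ with $|\calI(\tau)-I(t)|=\bigo(\eps^3)$ along $\tilde q$, and $\frac{\d}{\d\tau}\calI=\bigo(\eps^{N^\ast})$ coming from the non-Hamiltonian correction terms produced by $\dot\omega_\ell=\bigo(\eps)$. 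Combining this with the standard MFE remainder estimate comparing $q$ to $\tilde q$ on $[0,1/\eps]$ yields $|I(t)-I(0)|\le C\eps^3$ on one short interval.

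The final step is a patching/bootstrap argument. Because the difference $|I(t)-I(0)|=\bigo(\eps^3)$ is much smaller than $I(0)=\bigo(\eps^2)$, the $H^1_0\times L_2$ smallness needed to restart the MFE construction on $[k/\eps,(k+1)/\eps]$ is preserved, and the cumulative error after $k$ restarts is $\bigo(k\eps^{N^\ast})\,+\,\bigo(\eps^3)$ on each new interval; choosing $N^\ast$ so that $N^\ast-1\ge N+3$ makes the accumulated defect negligible up to $t\le\eps^{-N}$, giving the claimed bound. Long-time existence follows because the control of $I$ controls $\|\nabla_x u\|^2+\|\partial_t u\|^2$. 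The main obstacle throughout is the full 1D resonance: it forces $\calK$ to be infinite, so all estimates on the modulation functions, on the almost-invariant, and on the defect must be uniform in the mode index $j$, which requires the convolution-type estimates in the weighted mode norm to close — this is where the cubic nonlinearity and the $H^1_0$ regularity (hence an algebra structure in 1D) enter decisively, and where the non-autonomous corrections $\dot\omega_\ell=\bigo(\eps)$ must be carefully tracked so as not to spoil the order of the almost-invariant.
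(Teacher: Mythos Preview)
Your overall architecture---MFE with slowly varying phases, an almost-invariant obtained from an $S^1$ symmetry of the modulation system, and a patching argument over intervals of length $\eps^{-1}$---is exactly the strategy of the paper. But you miss the key simplification that makes the one-dimensional case clean, and one of your claims about the almost-invariants is not right.

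\medskip
\noindent\textbf{The missed simplification.} Because $\omega_j(\tau)=j\omega_1(\tau)$, the multi-index phase collapses: $\Phi_{\bfk}(t)=\bigl(\sum_\ell \ell k_\ell\bigr)\int_0^t \omega_1(\eps s)\,\d s$. The paper therefore abandons the multi-index $\bfk$ entirely and uses a \emph{single} integer label $k\in\Z$ with ansatz $u_j(t)\approx\sum_{k\in\Z} z_j^k(\eps t)\,\e^{\iu k\phi(\eps t)/\eps}$, $\dot\phi=c$. In this collapsed picture there are exactly two resonant modulations per spatial mode, namely $z_j^{\pm j}$, and all non-diagonal $z_j^k$ with $|k|\ne|j|$ are determined algebraically and are $\bigo(\eps^3)$. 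The norm the paper uses, $\trn\bfz\trn^2=\sum_j\bigl(j^2(|z_j^j|+|z_j^{-j}|)^2+(\sum_k|j^2-k^2||z_j^k|)^2\bigr)$, is tailored to this structure and makes the convolution estimate for the cubic nonlinearity elementary. Your multi-index setup instead has, for each $j$, infinitely many resonant $\bfk$ (all with $\sum_\ell\ell k_\ell=\pm j$), each satisfying its own first-order ODE; you do not explain how the initial data are distributed among them or how the resulting estimates close, and ``standard fixed-point arguments'' does not cover this. The collapse to a single $k$ is precisely what turns the full resonance from an obstacle into a simplification.

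\medskip
\noindent\textbf{The per-mode invariants.} You claim an almost-invariant $\calI_j$ for each $j$ with leading part $I_j$. In the fully resonant case this cannot be right as stated: the cubic nonlinearity resonantly couples the modes, so individual actions $I_j$ are \emph{not} long-time adiabatic invariants---only the total $I=\sum_j I_j$ is. The paper accordingly extracts a \emph{single} almost-invariant $\calI(\bfy,\dot\bfy)=\eps\sum_{j,k}\iu k\,y_{-j}^{-k}\dot y_j^k$ from the single $S^1$ symmetry $y_j^k\mapsto\e^{-\iu k\theta}y_j^k$ of the collapsed modulation system, and shows $\calI=2c\sum_j j^2|z_j^j|^2+\bigo(\eps^3)=I+\bigo(\eps^3)$. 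Your per-mode $\calI_j$ (in the multi-index picture) would at best sum, with weights, to this single invariant; the individual relation $\calI_j\approx I_j$ is only an $\bigo(\eps^3)$ statement and does not survive the patching.

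\medskip
\noindent\textbf{A minor point.} The drift $\frac{\d}{\d t}\calI$ does not come from ``non-Hamiltonian corrections produced by $\dot\omega_\ell$''. The $\dot c$ terms are part of the modulation equations and are fully compatible with the $S^1$ invariance; the drift comes solely from the truncation defect, giving $\frac{\d}{\d t}\calI=\bigo(\eps^{N+3})$.
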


\begin{theorem}[Two- and three-dimensional case]
\label{thm:main-t-3}
Consider the two- or three-dimensional nonlinear wave equation \eqref{wave} with slowly time-dependent wave speed, with homogeneous Dirichlet boundary conditions and initial values satisfying \eqref{init}. Fix $N\ge 1$ arbitrarily.
Under the above conditions, there exist $\eps_N>0$ and $\kappa_N>0$ such that for $\eps\le\eps_N$, the problem admits a solution $(u(\cdot,t),\partial_t u(\cdot,t))\in H^1_0(Q)\times L_2(Q)$ over times $t\le \kappa_N\eps^{-3+1/N}$, and $I$
is an adiabatic invariant: 
$$
|I(t)-I(0)|\le C_N\eps^{3} +C_N'\,t\,\eps^{5-1/N}  \quad\hbox{ for }\ t\le \kappa_N\eps^{-3+1/N},
$$
with  $C_N,C_N'$  independent of $\eps\le\eps_N$ and $t\le \kappa_N\eps^{-3+1/N}$.
\end{theorem}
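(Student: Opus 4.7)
The plan is to adapt the modulated Fourier expansion (MFE) for slowly time-varying frequencies of \cite{Hairer2016,Sigg2009} to the multidimensional setting. The essential new feature compared to Theorem~\ref{thm:main-t} is that for $d\ge 2$ the eigenfrequencies $\omega_\bfj(\tau)=c(\tau)|\bfj|_\ell$ with $|\bfj|_\ell=\bigl(\sum_i (j_i\pi/\ell_i)^2\bigr)^{1/2}$ are no longer commensurable; near-resonances $\omega_\bfj\approx\bfk\cdot\bfomega$ accumulate and cannot be eliminated by any nonresonance hypothesis on the side lengths $\ell_i$.

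First I would expand $u(x,t)=\sum_\bfj q_\bfj(t)\varphi_\bfj(x)$ in the Dirichlet Laplacian eigenbasis, turning (\ref{wave}) into the infinite coupled system
$$
\ddot q_\bfj + \omega_\bfj(\eps t)^2 q_\bfj=-a(\eps t)\sum_{\bfj_1,\bfj_2,\bfj_3}\gamma^\bfj_{\bfj_1\bfj_2\bfj_3}\,q_{\bfj_1}q_{\bfj_2}q_{\bfj_3}
$$
with cubic coefficients $\gamma^\bfj_{\bfj_1\bfj_2\bfj_3}=\int_Q\varphi_{\bfj_1}\varphi_{\bfj_2}\varphi_{\bfj_3}\varphi_\bfj\,\d x$, and then substitute the ansatz
$$
q_\bfj(t)\approx\sum_{|\bfk|\le 2N+1} z^\bfk_\bfj(\eps t)\,\e^{\iu\Phi^\bfk(t)/\eps},\qquad \Phi^\bfk(t)=\eps\sum_{\mathbf{m}} k_{\mathbf{m}}\int_0^t\omega_{\mathbf{m}}(\eps s)\,\d s,
$$
with $\bfk=(k_{\mathbf{m}})_{\mathbf{m}\in\N^d}$ a finitely supported integer multi-index and the truncation order tied to the chosen $N$. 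Collecting terms oscillating with the same phase produces modulation equations of the schematic form
$$
\bigl(\omega_\bfj^2-(\bfk\cdot\bfomega)^2\bigr)z^\bfk_\bfj+\eps\,\mathcal{D}_1[z^\bfk_\bfj]+\eps^2\mathcal{D}_2[z^\bfk_\bfj]=-a(\tau)\!\!\sum_{\bfk^1+\bfk^2+\bfk^3=\bfk}\sum_{\bfj_1,\bfj_2,\bfj_3}\!\gamma^\bfj_{\bfj_1\bfj_2\bfj_3}\,z^{\bfk^1}_{\bfj_1}z^{\bfk^2}_{\bfj_2}z^{\bfk^3}_{\bfj_3},
$$
where the linear prefactor $\omega_\bfj^2-(\bfk\cdot\bfomega)^2$ carries the small divisors. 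I would separate \emph{resonant} pairs $(\bfj,\bfk)$ for which $\omega_\bfj\equiv\bfk\cdot\bfomega$ (for $|\bfk|\le 2N+1$ this forces $\bfk$ to be essentially $\pm\jvec$) from the \emph{nonresonant} ones: on the resonant set the leading term vanishes and one obtains a slow first-order ODE for $z^\bfk_\bfj$, whereas off it I would impose the soft cutoff $|\omega_\bfj^2-(\bfk\cdot\bfomega)^2|\ge\eps^{1/N}$ to invert the prefactor at cost $\eps^{-1/N}$. Solving the resulting system iteratively in $|\bfk|$, in a weighted $\ell^2$-Sobolev norm compatible with $H^1(Q)$, yields modulation functions of size $\|z^\bfk_\bfj(\tau)\|\lesssim\eps^{|\bfk|-1/N}$ and a truncation defect of order $\eps^{N+1}$ when plugged back into (\ref{wave}).

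The modulation system inherits a formal $S^1$-gauge symmetry under simultaneous rotation $z^\bfk_\bfj\mapsto\e^{\iu\theta k_{\jvec}}z^\bfk_\bfj$, and Noether's theorem yields an almost-invariant $\mathcal{I}(\tau)$ built from quadratic forms in the $z^\bfk_\bfj$ that coincides with $I(t)/(2c(\eps t))$ up to $\bigo(\eps)$. Using the slow equations for $\mathcal{I}$ together with the smallness of the defect produces a drift bound of the form $|\mathcal{I}(\tau)-\mathcal{I}(0)|\le C_N\eps^3+C_N'\,t\,\eps^{5-1/N}$. Transferring this estimate back to $I$ and bootstrapping on successive windows of length $\bigo(\eps^{-1})$ (on which the slow coefficients $c,a$ vary only by $\bigo(\eps)$ so that the MFE can be re-initialised) yields both the claimed near-conservation and the existence of $(u,\partial_t u)\in H^1_0\times L_2$ as long as $I(t)=\bigo(\eps^2)$. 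The maximum time for which the linear drift does not exceed $I(0)\sim\eps^2$ is exactly $t\le\kappa_N\eps^{-3+1/N}$.

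The main obstacle --- and the source of the exponent $-3+1/N$ --- is the treatment of near-resonances. In one dimension the nonresonant differences $|j-\sum k_m m|$ have a uniform positive lower bound, so no cutoff is needed and the MFE can be pushed to arbitrarily high order; in 2D and 3D, without any Diophantine hypothesis on the $\ell_i$, the only robust option is the soft $\eps^{1/N}$-cutoff, which trades a factor $\eps^{-1/N}$ in the modulation functions against the same factor in the residual. Balancing these powers through the $(N+1)$-fold cubic interactions is precisely what pins down the time scale $\eps^{-3+1/N}$; the remaining ingredients --- gauge symmetry, almost-invariance, patching, and bootstrap --- parallel the one-dimensional argument and introduce no essentially new difficulties.
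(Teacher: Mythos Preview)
Your outline captures the general MFE strategy and correctly identifies near-resonances as the new obstacle, but there is a structural gap in how you handle them that prevents the argument from reaching the claimed time scale.

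You divide pairs $(\bfj,\bfk)$ into \emph{exactly resonant} ($\bfk=\pm\jvec$) and \emph{nonresonant} (above your $\eps^{1/N}$ cutoff), but you do not say what happens to the \emph{near-resonant} pairs with $0<\bigl||\bfk\cdot\bfomega|-\Omega_\bfj\bigr|<\hbox{cutoff}$. If these are simply dropped from the ansatz, they reappear in the defect at the level of the first cubic interaction of three diagonal modes, i.e.\ already at size $\bigo(\eps^3)$; the resulting drift of the almost-invariant is then $\bigo(\eps^4)$ per unit time, which gives only $t\lesssim\eps^{-2}$, not $\eps^{-3+1/N}$. Relatedly, your claim that the truncation defect is $\bigo(\eps^{N+1})$ cannot hold: each inversion of the nonresonant prefactor costs a negative power of $\eps$ through the cutoff, and this loss accumulates over the iteration so that the defect saturates at $\bigo(\eps^{4-1/N})$ regardless of how large $N$ is. Your bound $\|z^\bfk_\bfj\|\lesssim\eps^{|\bfk|-1/N}$ is likewise inconsistent with a repeated loss at every level.

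The paper closes this gap in two steps. First, near-resonant phases are not discarded but \emph{merged into the diagonal equation}: one writes $\e^{\iu(\bfk\cdot\bfomega)\phi/\eps}=w_\bfj^\bfk(\tau)\,\e^{\pm\iu\Omega_\bfj\phi/\eps}$ with the slowly varying factor $w_\bfj^\bfk=\e^{\iu((\bfk\cdot\bfomega)\mp\Omega_\bfj)\phi/\eps}$, whose $q$-th derivative is $\bigo(\eps^{-q/N})$ when the cutoff is placed at $\eps^{1-1/N}$ on $\bigl||\bfk\cdot\bfomega|-\Omega_\bfj\bigr|$; the corresponding cubic terms then feed into the first-order ODE for $z_\bfj^{\pm\jvec}$ rather than into the defect. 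This produces an extra inhomogeneity $e_\bfj^{\pm\jvec}$ in the equation governing the almost-invariant. Second---and this is the decisive point your proposal is missing---the contribution of $e_\bfj^{\pm\jvec}$ to $\frac{\d}{\d t}\calI$ is shown to be $\bigo(\eps^{5-1/N})$, not merely $\bigo(\eps^4)$, by a four-wave symmetrization (Lemma~\ref{lem:e-bound}): after symmetrizing over the four indices, the scalar coefficient becomes $\sum_{i=1}^4 s_i\Omega_{\bfj_i}$, which is precisely the near-resonant combination and hence is itself bounded by $\eps^{1-1/N}$. This cancellation is what buys the extra factor $\eps^{1-1/N}$ in the drift and hence the time scale $\eps^{-3+1/N}$; the Noether-type argument by itself, without this step, does not suffice.
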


The bound of Theorem~\ref{thm:main-t-3} is uniform for all rectangular domains for which $0<\ell_{\min}\le \ell_i \le \ell_{\max}$. No assumptions on resonances or non-resonances among the frequencies are made. It is the presence of almost-resonances among countably many frequencies that prevents us from covering longer time scales, in contrast to the situation of finitely many frequencies in ordinary differential equations where almost-resonances can be dealt with over much longer time scales; cf.~\cite{Bambusi2013,Gauckler2013}.

We remark that in both theorems, for the given integer $N$ the interaction of any $(N+1)$-tuples of  frequencies via the nonlinearity is taken into account in the proof.

\section*{Part I: Proof of Theorem~\ref{thm:main-t}}

\section{Modulated Fourier expansion for the short-time solution approximation}
\label{sec:mfe-t}

\subsection{Statement of result}
We consider the one-dimensional case where, without loss of generality, the interval is taken as $(0,\pi)$. 
In the course of this section we will prove the following result.

\begin{theorem}
 \label{thm:mfe}
In the situation of Theorem~\ref{thm:main-t}, the solution $u(x,t)$ of \eqref{wave} admits a modulated Fourier expansion
\begin{equation}\label{mfe2}
u(x,t)= \iu \sum_{j\in\Z} \sum_{k\in\Z} 
z_{j}^k (\eps t)\, \e^{\iu k\phi(\eps t)/\eps}\, \e^{\iu jx} + r(x,t),
\qquad 0\le t \le \eps^{-1},
\end{equation}
where the phase function $\phi(\tau)$ satisfies $\frac{\d\phi}{\d \tau}(\tau)=c(\tau)$ and the modulation functions $z_j^k(\tau)$  satisfy $z_j^{-k}=\overline{z_j^k}=-\overline{z_{-j}^{-k}}$ and are bounded for $0\le\tau\le 1$, together with any fixed number of derivatives with respect to $\tau$,  by
\begin{align*}
& \Bigl(\sum_{j\in\Z} j^2 \bigl( |z_{j}^j(\tau)| +  |z_{j}^{-j}(\tau)| \bigr)^2\Bigr)^{1/2} \le C_1\eps
 \\
& \Bigl(\sum_{j\in\Z}\Bigr( \sum_{k\in\Z} |j^2-k^2| |z_{j}^k(\tau) | \Bigr)^2 \Bigr)^{1/2} \le C_2\eps^3 .
\end{align*}
The remainder term is bounded by
\begin{equation}\label{r-bound}
\|r(\cdot, t)\|_{H^1_0} + \|\partial_t r(\cdot,t)\|_{L_2}  \le C_3(1+t)\eps^{N+2}, \qquad 0\le t \le \eps^{-1}.
\end{equation}
The constants $C_1,C_2,C_3$ are independent of $\eps$, but depend on $N$, on the  bound \eqref{init} of the initial values and on bounds of $c(\tau)$ and $a(\tau)$ and their derivatives.
\end{theorem}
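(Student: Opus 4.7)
The plan is to construct a modulated Fourier ansatz
\begin{equation*}
v(x,t) = \iu \sum_{j,k \in \Z} z_j^k(\eps t)\, \e^{\iu k\phi(\eps t)/\eps}\, \e^{\iu jx}
\end{equation*}
order by order in $\eps$ by plugging into \eqref{wave}, and then estimate the remainder $r=u-v$ by energy methods. The first step is to compute $\partial_t^2 v$ and $c^2 \partial_x^2 v$ and match coefficients of each basis function $\e^{\iu k\phi(\eps t)/\eps}\e^{\iu jx}$. With $\tau=\eps t$ and $\phi'(\tau)=c(\tau)$, time differentiation acts on each summand as the operator $\iu k c(\tau)+\eps\,\d/\d\tau$, and $c(\tau)^2\partial_x^2$ contributes $-j^2 c(\tau)^2$. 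Collecting and treating the cubic nonlinearity as a triple convolution in $(j,k)$ produces a \emph{modulation system} of the schematic form
\begin{equation*}
(k^2-j^2)\,c(\tau)^2\,z_j^k \;=\; 2\iu k c(\tau)\,\eps\dot z_j^k + \iu k\,\eps c'(\tau)\,z_j^k + \eps^2 \ddot z_j^k \;+\; a(\tau)\!\!\sum_{\substack{j_1+j_2+j_3=j\\ k_1+k_2+k_3=k}}\!\! z_{j_1}^{k_1}z_{j_2}^{k_2}z_{j_3}^{k_3}
\end{equation*}
for every $(j,k)\in\Z^2$. The crucial dichotomy is between \emph{diagonal} indices $k=\pm j$, where the left-hand side vanishes and the relation becomes a slow-time ODE for $z_j^{\pm j}(\tau)$, and \emph{off-diagonal} indices $j^2\neq k^2$, where $z_j^k$ is determined algebraically by dividing by $(k^2-j^2)c(\tau)^2$. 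The initial data \eqref{init} fix initial values for the diagonal ODEs and initialize the iteration with $z_j^{\pm j}(0)=\bigo(\eps)$.

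Second, I would solve this system iteratively, expanding $z_j^k=\sum_n \eps^n z_{j,n}^k$ and truncating at order $\eps^{N+1}$. The appropriate function spaces are the two weighted $\ell^2(\Z^2)$ spaces appearing in the statement: an $H^1$-type norm on the diagonal modes, and a norm with non-resonance weight $|j^2-k^2|$ off-diagonal. In one spatial dimension the cubic nonlinearity maps these spaces into themselves by a Cauchy--Schwarz-type convolution estimate, so the iteration closes with the diagonal bound $\bigo(\eps)$ propagated from $\|\nabla_x u(\cdot,0)\|=\bigo(\eps)$, and the off-diagonal bound $\bigo(\eps^3)$ arising from one activation of the cubic source together with the algebraic prefactor $1/(k^2-j^2)$. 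Bounds on the $\tau$-derivatives $\dot z_j^k$ and $\ddot z_j^k$, needed to re-insert into the modulation system, are obtained by differentiating the equations and inheriting the same-type estimates.

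Third, let $v$ denote the truncated ansatz and define the defect
\begin{equation*}
d(x,t) \;=\; \partial_t^2 v - c(\eps t)^2\,\partial_x^2 v - a(\eps t)\,v^3.
\end{equation*}
By construction, $d$ consists only of terms discarded by the truncation, giving $\|d(\cdot,t)\|_{L_2}\le C\eps^{N+2}$ uniformly on $0\le t\le\eps^{-1}$. The remainder $r=u-v$ then satisfies $\partial_t^2 r - c^2\partial_x^2 r = a(\eps t)(3v^2 r+3v r^2+r^3) - d$, and a standard energy estimate for the wave operator $\partial_t^2-c(\eps t)^2\partial_x^2$ with the small time-dependent ``potential'' $3 a(\eps t) v^2$ of size $\bigo(\eps^2)$, combined with a bootstrap on $\|r\|_{H^1_0}+\|\partial_t r\|_{L_2}$, yields the bound \eqref{r-bound}; the factor $(1+t)$ is the Duhamel accumulation of the defect over $0\le t\le\eps^{-1}$.

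The main obstacle I foresee is preserving the second claimed bound,
\begin{equation*}
\Bigl(\sum_{j\in\Z}\Bigl(\sum_{k\in\Z}|j^2-k^2|\,|z_j^k(\tau)|\Bigr)^2\Bigr)^{1/2}\le C_2\eps^3,
\end{equation*}
throughout the iterative construction. Each application of the cubic source triples the $\eps$-order and couples the indices by convolution; one must then check that multiplication by the non-resonance weight $|j^2-k^2|$ is absorbed either by the algebraic division on the off-diagonal or by ODE stability on the diagonal, without destroying the $\ell^2$-in-$j$ Sobolev structure. This combinatorial bookkeeping, which refines the general MFE pattern of \cite{Cohen2008,Sigg2009,Hairer2016} to the varying-frequency setting, together with the corresponding control on $\dot z_j^k$ and $\ddot z_j^k$ needed to close the recursion, is where the substantive work of the proof sits; the defect estimate and the remainder analysis are then comparatively routine.
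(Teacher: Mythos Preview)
Your proposal is correct and follows essentially the same route as the paper: derive the modulation system, split into diagonal ODEs and off-diagonal algebraic relations, expand $z_j^k=\sum_l \eps^l z_{j,l}^k$, bound the coefficients in the weighted norm \eqref{norm} via a convolution estimate on the cubic term (the paper's Lemma~\ref{lem:boundg}) together with the recursion (the paper's Lemma~\ref{lem:boundsz}), bound the defect (Lemma~\ref{lem:defect}), and finish with an energy/Gronwall argument on the linear evolution family (Lemmas~\ref{lem:U}--\ref{lem:defect} ff.). The one point worth sharpening is your phrase ``maps these spaces into themselves'': the cubic convolution actually lands only in the unweighted norm $\bigl(\sum_j(\sum_k|g_{j,l}^k|)^2\bigr)^{1/2}$, and the weights $|j^2-k^2|$ (off-diagonal) and $|j|$ (diagonal) are recovered exactly as you say in your final paragraph, through the algebraic division and the $\Omega_j$-factor in the diagonal ODE respectively---this is precisely the content of the paper's Lemma~\ref{lem:boundsz}.
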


\subsection{Spatial Fourier expansion}

We extend the initial values $u(x,0)$ and $\partial_t u(x,0)$ to odd functions on the
interval $[-\pi ,\pi]$. Since all terms in (\ref{wave}) are odd powers of $u$,
the solution of the equation remains an odd function
for all $t$. We consider the Fourier series
\[
u(x,t) = \iu\sum_{j\in\Z} u_j(t)\, \e^{\iu j x} 
\]
with real $u_j$, and $u_{-j}=-u_j$. In particular, $u_0=0$. The assumptions
on the initial conditions become
\begin{equation}\label{initialf}
\sum_{j\in\Z}  j^2 |u_j(0)|^2 = \bigo (\eps^2 ) ,\qquad
\sum_{j\in\Z}  \Big| \frac{\d}{\d t}u_j(0)\Big|^2 = \bigo (\eps^2 ) .
\end{equation}
The system of differential equations for the Fourier coefficients is given by 
\begin{equation}\label{wavevarf}
\frac{\d^2 u_j}{\d t^2} = - c(\eps t)^2  j^2 u_j 
- a(\eps t) \sum_{j_1+j_2+j_3=j} u_{j_1}u_{j_2}u_{j_3}  ,
\end{equation}
where the sum is over all $(j_1,j_2,j_3)$ satisfying $j_1+j_2+j_3=j$.

\subsection{Formal modulated Fourier expansion (MFE) in time}

For the Fourier coefficients of $u(x,t)$ we consider the MFE
\begin{equation}\label{mfe-uj}
u_j(t)  \approx  \sum_{k\in\Z} z_j^k (\eps t ) \,\e^{\iu k \phi (\eps t)/\eps} ,
\end{equation}
where the coefficient functions $z_j^k$ and the phase function $\phi$ are yet to be determined.
We introduce the slow time $\tau = \eps t$, and denote differentiation with
respect to $\tau$ by a dot. We insert the ansatz
(\ref{mfe-uj}) into (\ref{wavevarf}), and compare the coefficients of
$\e^{\iu k \phi (\tau )/\eps}$.  The coefficient of $\e^{\iu k \phi (\tau )/\eps} $ in
$\frac{\d^2}{\d t^2} u_j(t) = \eps^2 \frac{\d^2}{\d\tau^2}u_j(\tau /\eps )$ is
given by
\[
\eps^2 \ddot z_j^k + 2\iu k \eps \dot \phi \dot z_j^k +
\bigl( \iu k \eps \ddot \phi - k^2 \dot\phi^2 \bigr) z_j^k  .
\]
Consequently, the
functions $z_j^k (\tau )$ have to satisfy the system ($j,k\in\Z$)
\begin{eqnarray}
&&\eps^2 \ddot z_j^k + 2\iu k \eps \dot \phi \dot z_j^k +
\bigl( \iu k \eps \ddot \phi - k^2 \dot\phi^2 \bigr)z_j^k  +
j^2 c^2 z_j^k    \label{odezjkt}\\
&&\qquad\qquad\qquad ~~ = - a  
\sum_{j_1+j_2+j_3=j} \, \sum_{k_1+k_2+k_3=k} z_{j_1}^{k_1}z_{j_2}^{k_2}z_{j_3}^{k_3} . \nonumber
\end{eqnarray}
By assumption (\ref{initialf}) all $z_j^k$ will be bounded by $\bigo (\eps )$.
The dominant term for $|k|=|j|$ (obtained by neglecting the cubic
expression in $z$ and by putting $\eps =0$) thus motivates the definition
of the phase function $\phi (\tau )$ by
\begin{equation}\label{phit}
\dot \phi (\tau ) = c(\tau ) , \qquad \phi (0)=0 .
\end{equation}
The initial conditions 
yield
$$ 
u_j(0) = \sum_{k\in\Z} z_j^k(0),\qquad
\frac{\d}{\d t} u_j(0) = \sum_{k\in\Z} \Bigl( \iu k c (0)\,z_j^k(0) + \eps \dot z_j^k(0) \Bigr) .
$$

\subsection{Construction of the coefficient functions for the MFE}
\label{sect:construction}

We aim at constructing an approximate solution for the system (\ref{odezjkt})
having a small defect. For this we make an ansatz as a truncated series
in powers of $\eps$,
\begin{equation}\label{ansatz}
z_j^k (\tau ) = \sum_{l = 1}^{N+1} \eps^l z_{j,l}^k (\tau ) ,
\end{equation}
and we use the convention that $z_{j,l}^k (\tau )\equiv 0$ for $l\le 0$. 
Inserting (\ref{ansatz}) into (\ref{odezjkt}), comparing like powers of $\eps$
and using (\ref{phit}) yields
\begin{equation}\label{odezjlk}
\ddot z_{j,l-2}^k+ 2\iu k c \dot z_{j,l-1}^k +
\iu k  \dot c  z_{j,l-1}^k +
(j^2 - k^2 ) c^2 z_{j,l}^k   = g_{j,l}^k (\bfZ) 
\end{equation}
where for $\bfZ=(\bfz_1,\dots,\bfz_{l-2})$ with $\bfz_i=(z_{j,i}^k)$
\begin{equation}\label{gjlk}
g_{j,l}^k (\bfZ ) =  - a \sum_{l_1+l_2+l_3=l} \,\sum_{j_1+j_2+j_3=j}\,\sum_{k_1+k_2+k_3=k}
  z_{j_1,l_1}^{k_1}z_{j_2,l_2}^{k_2}z_{j_3,l_3}^{k_3} . 
\end{equation}
For $k\ne \pm j$, the equation (\ref{odezjlk}) represents an algebraic relation
for $z_{j,l}^k$, and for $k=\pm j$ a first order linear differential equation
for $z_{j,l-1}^k$. Initial values for this differential equation are obtained from
\begin{align}\label{initialbjl}
&\frac1\eps \,u_j(0)= \sum_{k\in\Z} z_{j,1}^k(0),\qquad
\frac1\eps\,\frac{\d}{\d t} u_j(0)   = \sum_{k\in\Z} \Bigl( \iu k c (0)\,z_{j,1}^k(0) \Bigr) 
\\
&0 = \sum_{k\in\Z} z_{j,l}^k(0),\qquad
0 = \sum_{k\in\Z} \Bigl( \iu k c (0)\,z_{j,l}^k(0) + \dot z_{j,l-1}^k(0) \Bigr) , \qquad l\ge 2.\label{initialbjl2}
\end{align}

The construction of the coefficient functions is done iteratively with increasing~$l$.
Assume that the functions $z_{j,l-2}^k$ and $z_{j,l-1}^k$ are already known
for all $j$ and all~$k$.
This is true for $l=1$. Equation (\ref{odezjlk}) then yields $z_{j,l}^k$ for $k\ne \pm j$.
The two relations of (\ref{initialbjl})--(\ref{initialbjl2}) are then a linear system for $z_{j,l}^{j}(0)$ and
$z_{j,l}^{-j} (0)$ (note that the case $j=0$ need not be considered, because $u_0(t)=0$).
With these initial values the two differential equations (\ref{odezjlk}) for $k=j$ and $k=-j$,
and $l$ replaced by $l+1$,
finally give the remaining functions $z_{j,l}^{\pm j}$.

With this construction, $z_j^k(\tau)$ of \eqref{ansatz} satisfies at $\tau=0$
\begin{align}
 \label{init-u}
 &\sum_{k\in\Z} z_j^k(0) - u_j(0) = 0 \\
 \label{init-udot}
 &\sum_{k\in\Z} \Bigl( \iu k c(0) z_j^k(0)+\eps \dot z_j^k(0) \Bigr) - \frac\d{\d t} u_j(0) = \eps^{N+2} \sum_{k\in\Z} \dot z_{j,N+1}^k(0).
\end{align}

\subsection{Bounds for the coefficient functions of the MFE}
\label{subsec:bounds-z}

Infinite sums are involved in the coupling term $g_{j,l}^k(\bfZ )$ of the
system (\ref{odezjlk}). For a rigorous analysis we have to investigate their convergence.

To bound the coefficient functions we consider for $\bfz_l = (z_{j,l}^k)_{j,k\in\Z}$ the norm
\begin{equation}\label{norm}
\trn \bfz_l \trn^2 =\sum_{j\in\Z^*}\Bigl( j^2 \bigl( |z_{j,l}^j| +  |z_{j,l}^{-j}| \bigr)^2
+ \Bigr( \sum_{k\in\Z} |j^2-k^2| |z_{j,l}^k | \Bigr)^2 \Bigr) ,
\end{equation}
where we use the notation $\Z^* = \Z \setminus \{ 0\}$.

\begin{lemma}\label{lem:boundg}
Let $\bfZ=(\bfz_1,\dots,\bfz_{l-2})$ with $\bfz_i = ( z_{j,i}^k )_{j,k\in\Z }$ and assume that
\[
\trn \bfz_i\trn \le B\quad \hbox{for}\quad i =1,\ldots ,l-2 .
\]
For the expression $ g_{j,l}^k (\bfZ )$ of \eqref{gjlk} there then exists a constant $C$ such that
\[
\Bigl(\, \sum_{j\in\Z^*} \Bigl( \sum_{k\in\Z} \big| g_{j,l}^k (\bfZ ) \big|\Bigr)^2 \Bigr)^{1/2}
\le C B^3 .
\]
\end{lemma}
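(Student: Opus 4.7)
My plan is to reduce the stated bound, via the triangle inequality applied to the triple sum defining $g_{j,l}^k$, to an $\ell^2$-estimate for a triple convolution in the index $j$. Pulling the inner sum over $k$ inside (so that $\sum_k\sum_{k_1+k_2+k_3=k}=\sum_{k_1,k_2,k_3}$) gives
\[
 \sum_{k\in\Z} \bigl|g_{j,l}^k(\bfZ)\bigr|
 \le \|a\|_\infty \sum_{l_1+l_2+l_3=l} \sum_{j_1+j_2+j_3=j} w_{j_1,l_1}\, w_{j_2,l_2}\, w_{j_3,l_3},
\]
where I abbreviate $w_{j,i} := \sum_{k\in\Z} |z_{j,i}^k|$ (with $w_{0,i}=0$ since $u_0\equiv 0$ forces $z_{0,i}^k=0$). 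Since the outer sum over $l_1+l_2+l_3=l$ has only finitely many terms, Minkowski's inequality in $\ell^2(\Z^*)$ then reduces the claim to proving $\|w_{l_1}\ast w_{l_2}\ast w_{l_3}\|_{\ell^2(\Z)}\le CB^3$ for each fixed tuple $(l_1,l_2,l_3)$.

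The central auxiliary statement I aim for is that each $w_i=(w_{j,i})_{j\in\Z}$ lies in $\ell^1(\Z)\cap \ell^2(\Z)$, with both norms bounded by a constant multiple of $B$. Granted this, Young's inequality via the chain $\ell^1\ast\ell^1\subset\ell^1$ and $\ell^1\ast\ell^2\subset\ell^2$ closes the estimate, since
\[
 \|w_1\ast w_2\ast w_3\|_{\ell^2}\le \|w_1\|_{\ell^1}\|w_2\|_{\ell^1}\|w_3\|_{\ell^2}\le CB^3.
\]

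The hard part will be the $\ell^1$-bound on $w_i$: the off-diagonal contributions $\sum_{k\ne\pm j}|z_{j,i}^k|$ are controlled only through the weighted quantity $s_{j,i}:=\sum_{k\in\Z}|j^2-k^2|\,|z_{j,i}^k|$, whose $\ell^2(\Z^*)$-norm is bounded by $\trn \bfz_i\trn$ but which need not be summable over $j$. To extract the missing factor I plan to use the elementary inequality
\[
 |j^2-k^2|\ge |j|\qquad \text{for all } j\in\Z^*,\ k\in\Z \text{ with } k\ne \pm j,
\]
which follows from $|j^2-k^2|=|j-k|\,|j+k|$, the fact that both factors are positive integers for $k\ne\pm j$, and $|j-k|+|j+k|\ge 2|j|$, forcing one factor to be $\ge|j|$ and the other to be $\ge 1$. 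Multiplying by $|z_{j,i}^k|$ and summing over $k\ne \pm j$ yields $|j|\sum_{k\ne\pm j}|z_{j,i}^k|\le s_{j,i}$; together with the diagonal bound $|j|\bigl(|z_{j,i}^j|+|z_{j,i}^{-j}|\bigr)$, which lies in $\ell^2(\Z^*)$ with norm $\le\trn\bfz_i\trn$ directly from the definition of $\trn\cdot\trn$, this yields $\|\,j\,w_i\|_{\ell^2(\Z^*)}\le CB$. The $\ell^2(\Z^*)$-bound on $w_i$ is then immediate from $|j|\ge 1$, and the $\ell^1(\Z^*)$-bound follows from Cauchy--Schwarz using $\sum_{j\in\Z^*} j^{-2}<\infty$. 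Combining with the Young step above completes the proof.
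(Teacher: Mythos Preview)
Your argument is correct and shares the same core ingredients as the paper's proof: the key elementary inequality $|j^2-k^2|\ge|j|$ for $k\ne\pm j$, followed by Cauchy--Schwarz with the convergent series $\sum_{j\in\Z^*}j^{-2}$ to pass from the weighted $\ell^2$-control encoded in $\trn\cdot\trn$ to an $\ell^1$-bound on $w_{j,i}=\sum_k|z_{j,i}^k|$. The only organizational difference is in the final step: you keep the $\ell^2(\Z^*)$-structure in $j$ and invoke Young's convolution inequality $\ell^1*\ell^1*\ell^2\subset\ell^2$, whereas the paper takes the cruder route of bounding the full double sum $\sum_{j}\sum_k|g_{j,l}^k|$ in $\ell^1$ (which factors trivially as $\prod_i\|w_i\|_{\ell^1}$) and then uses that the $\ell^2$-norm is dominated by the $\ell^1$-norm. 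The paper's version is marginally shorter; yours is a clean convolution argument that would adapt more readily if one needed to keep track of weights in $j$.
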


\begin{proof}
We have
\[
\begin{array}{l}
\displaystyle
\sum_{j\in\Z^*} \sum_{k\in\Z} \big| g_{j,l}^k (\bfZ ) \big| \\[5mm]
\displaystyle
\le |a|\sum_{l_1+l_2+l_3=l}\Bigl( \sum_{j_1\in\Z^*, k_1\in\Z } |z_{j_1,l_1}^{k_1}| \Bigr)
 \Bigl( \sum_{j_2\in\Z^*, k_2\in\Z } |z_{j_2,l_2}^{k_2}| \Bigr)
\Bigl( \sum_{j_3\in\Z^*, k_3\in\Z } |z_{j_3,l_3}^{k_3}| \Bigr) .
\end{array}
\]
Note that the sum over $(l_1,l_2,l_3)$ is finite.
The Cauchy--Schwarz inequality and the inequality $|j|\le |j^2-k^2|$ for  $|k|\ne|j|$ yield
\begin{align*}
\sum_{j\in\Z^*} \sum_{k\in\Z }|z_{j,l}^{k}| &= \sum_{j\in\Z^* }|j|^{-1}\cdot  \sum_{k\in\Z } |j| |z_{j,l}^{k}| 
\\
&\le
\Bigl( \sum_{j\in\Z^*}j^{-2}\Bigr)^{1/2} \Bigl( \sum_{j\in\Z^*} \Bigl(\sum_{k\in\Z } |j| |z_{j,l}^{k}|\Bigr)^2\Bigr)^{1/2} \le c \trn \bfz_l \trn .
\end{align*}
The statement now follows, since the $\ell^2$ norm is bounded by the $\ell^1$ norm.
\qed
\end{proof}

\begin{lemma}\label{lem:boundsz}
The coefficient functions $\bfz_l (\tau ) = \bigl(z_{j,l}^k (\tau )\bigr)$, constructed in
Section~\ref{sect:construction}, are  bounded in the
norm \eqref{norm}: there exist constants $C_l$ such that
\[
\trn \bfz_l (\tau ) \trn \le C_l \quad \hbox{for} \quad 0\le \tau \le 1 .
\]
Bounds of the same type hold for any fixed number of derivatives of $\bfz_l (\tau )$.
\end{lemma}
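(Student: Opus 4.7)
The plan is to proceed by strong induction on $l$, proving simultaneously that $\trn \bfz_l(\tau)\trn$ and $\trn \bfz_l^{(p)}(\tau)\trn$ are bounded uniformly on $[0,1]$ for $0\le p\le P$, where $P$ is the number of derivatives required in Theorem~\ref{thm:mfe}. Carrying along derivatives is essential, because the recursion (\ref{odezjlk}) expresses $z_{j,l}^k$ in terms of $\ddot z_{j,l-2}^k$ and $\dot z_{j,l-1}^k$; each inductive step therefore consumes two derivatives from lower orders, so the induction must start with a sufficiently high derivative budget.

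For the base case $l=1$, the triple convolution $g_{j,1}^k(\bfZ)$ vanishes identically, since there is no decomposition $l_1+l_2+l_3=1$ with all $l_i\ge 1$. Hence the algebraic relation in (\ref{odezjlk}) forces $z_{j,1}^k\equiv 0$ for $k\ne\pm j$. For $k=\pm j$ one uses (\ref{odezjlk}) with $l$ replaced by $l+1=2$; again $g_{j,2}^{\pm j}=0$, and the remaining identity reduces to the homogeneous first-order ODE $2c\,\dot z_{j,1}^{\pm j}+\dot c\,z_{j,1}^{\pm j}=0$, whose explicit solution is $z_{j,1}^{\pm j}(\tau)=z_{j,1}^{\pm j}(0)\sqrt{c(0)/c(\tau)}$. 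The initial values $z_{j,1}^{\pm j}(0)$ are determined by the invertible $2\times 2$ system (\ref{initialbjl}), and the assumption (\ref{initialf}) directly furnishes $(|j|\,|z_{j,1}^{\pm j}(0)|)_j\in\ell^2$ with the required bound.

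For the inductive step, assume the statement for $i\le l-1$. Lemma~\ref{lem:boundg} immediately controls the nonlinear coupling $g_{j,l}^k(\bfZ)$ in the $\ell^2_j\ell^1_k$ norm. For non-resonant indices $k\ne\pm j$ the algebraic relation (\ref{odezjlk}) gives
$$
(j^2-k^2)c^2\,z_{j,l}^k = g_{j,l}^k(\bfZ)-\ddot z_{j,l-2}^k-2\iu k c\,\dot z_{j,l-1}^k-\iu k \dot c\,z_{j,l-1}^k,
$$
and dividing by $(j^2-k^2)c^2$ (using $|j^2-k^2|\ge 1$ and $c\ge c_0$) directly bounds $|j^2-k^2|\,|z_{j,l}^k|$; summing in $k$ and then taking $\ell^2$ in $j$ produces the second term in the norm (\ref{norm}), where the factor $|k|\le |j|+|j^2-k^2|/(|j|+|k|)$ allows the $\dot z_{j,l-1}^k$ contribution to be absorbed into the inductive bound on $\trn\bfz_{l-1}\trn$ and its derivative. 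For the resonant indices $k=\pm j$, equation (\ref{odezjlk}) with $l$ replaced by $l+1$ reads
$$
2\iu(\pm j)c\,\dot z_{j,l}^{\pm j}+\iu(\pm j)\dot c\,z_{j,l}^{\pm j} = g_{j,l+1}^{\pm j}(\bfZ)-\ddot z_{j,l-1}^{\pm j},
$$
a first-order linear ODE whose integrating factor is $c(\tau)^{1/2}$, producing
$$
c(\tau)^{1/2} z_{j,l}^{\pm j}(\tau) = c(0)^{1/2}z_{j,l}^{\pm j}(0) + \int_0^\tau \frac{g_{j,l+1}^{\pm j}(\bfZ)(s)-\ddot z_{j,l-1}^{\pm j}(s)}{2\iu(\pm j)c(s)^{1/2}}\,ds.
$$
Multiplying by $|j|$, applying Minkowski's integral inequality in $\ell^2_j$ and invoking Lemma~\ref{lem:boundg} for the $g$ term plus the inductive bound on $\ddot z_{j,l-1}^{\pm j}$ yields the first term in the norm (\ref{norm}). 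The initial datum $z_{j,l}^{\pm j}(0)$ is fixed by the $2\times 2$ linear system (\ref{initialbjl2}) in terms of the already-bounded non-resonant values $z_{j,l}^k(0)$ and the $\dot z_{j,l-1}^k(0)$. Bounds for derivatives $\bfz_l^{(p)}$ follow by differentiating (\ref{odezjlk}) in $\tau$ and repeating the same argument.

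The main obstacle will be the joint control of the resonant component and the infinite sums entering through the nonlinearity: the algebraic division by $(j^2-k^2)c^2$ is harmless but couples every mode $(j,k)$ to countably many others through $g_{j,l}^k$, so one must ensure that the norm (\ref{norm}) is tailored precisely to make Lemma~\ref{lem:boundg} interact correctly with the $1/(j^2-k^2)$ gain. A second technical point is the bookkeeping of derivatives: since each level of the recursion requires two extra derivatives of the previous level, the induction must be set up with a fixed but sufficiently large derivative order $P$, and the construction in Section~\ref{sect:construction} must be checked to produce a $C^P$ solution — this follows from the smoothness of $c$ and $a$, but needs to be tracked explicitly to quote the last sentence of the lemma.
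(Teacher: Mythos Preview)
Your proposal is correct and follows essentially the same inductive strategy as the paper's proof: bound the off-diagonal part algebraically from \eqref{odezjlk}, then the diagonal initial values from the $2\times2$ system, then the diagonal functions from the first-order ODE, using Lemma~\ref{lem:boundg} for the nonlinearity throughout. The only cosmetic differences are that the paper uses the cleaner inequality $|k|\le|j^2-k^2|$ (valid for integers with $|k|\ne|j|$) in place of your split $|k|\le|j|+|j^2-k^2|/(|j|+|k|)$, and writes the diagonal estimate via variation of constants rather than your explicit integrating factor $c^{1/2}$; it also uses $|k\pm j|\le|j^2-k^2|$ when bounding the initial values, which you should make explicit.
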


\begin{proof}
Assume that $\bfz_\lambda (\tau ) $ and its derivatives up to order $N+1$ are
bounded on the interval $0\le\tau\le 1$ in the $\trn \cdot \trn$-norm for $\lambda \le l-1$.
This is true for $l=1$, because $\bfz_\lambda (\tau )\equiv 0$ for $\lambda \le 0$.

a) For $|k|\ne |j|$ it follows from (\ref{odezjlk}) that
\[
|j^2-k^2|\cdot |z_{j,l}^k| \le C \Bigl( |k|\cdot |z_{j,l-1}^k| +  |k|\cdot |\dot z_{j,l-1}^k| + 
 |\ddot z_{j,l-2}^k| + |g_{j,l}^k (\bfZ ) |\Bigr).
\]
Using $|k| \le |j^2-k^2|$, the triangle inequality for the Euclidean norm, and
Lemma~\ref{lem:boundg}, the boundedness assumption on $\bfz_{j,\lambda}^k$ and on
its derivatives (for
$\lambda \le l-1$) implies that
\begin{equation}\label{estim1}
\sum_{j\in\Z^*}\Bigl( \sum_{k\in\Z} |j^2-k^2| |z_{j,l}^k |\Bigr)^2 \le C .
\end{equation}

b) Solving the linear system (\ref{initialbjl})--(\ref{initialbjl2}) for $z_{j,l}^j(0)$ and $z_{j,l}^{-j}(0)$ yields
\[
2\iu j c(0) z_{j,l}^{\pm j}(0) = b_{j,l}^{\pm j} -
\sum_{|j|\ne |k|} \iu (k\pm j) c(0) z_{j,l}^k(0) + \sum_{j,k} \dot z_{j,l-1}^k (0),
\]
where $b_{j,l}^{\pm j} = (\d u_j/\d t(0)\pm \iu j u_j(0))/\eps$ for $l=1$, and $b_{j,l}^{\pm j} = 0$ for $l\ge 2$.
Using $|k\pm j|\le |j^2-k^2|$, the assumption on the initial values, the estimate
of part~(a) for $z_{j,l}^k (0)$, and the boundedness of $\trn \dot \bfz_{l-1}(0) \trn$, we obtain
\begin{equation}\label{estim2}
\sum_{j\in\Z^*} j^2 \bigl( \big|z_{j,l}^{ j}(0)\big| +  \big|z_{j,l}^{ -j}(0)\big| \bigr)^2 \le C .
\end{equation}

c) For $k= \pm j$, equation (\ref{odezjlk}), with $l$ augmented by $1$, yields the
differential equation for $z_{j,l}^{\pm j}$
\[
\pm 2\iu j c \dot z_{j,l}^{\pm j} \pm \iu j \dot c z_{j,l}^{\pm j} = - \ddot z_{j,l-1}^{\pm j}
+ g_{j,l+1}^{\pm j} (\bfZ,\tau ) .
\]
By the variation of constants formula we obtain, for $0\le \tau \le 1$,
\[
|j| | z_{j,l}^{\pm j} (\tau ) | \le C_1 |j| | z_{j,l}^{\pm j} (0) | + C_2 \max_{0\le\sigma \le \tau}
\Bigl( \big| \ddot z_{j,l-1}^{\pm j} (\sigma ) \big| + \big| g_{j,l+1}^{\pm j} \bigl( \bfZ (\sigma ),\sigma\bigr) \big|
\Bigr) .
\]
Using (\ref{estim2}), the boundedness of $\trn \ddot \bfz_{l-1}(0) \trn$, and
Lemma~\ref{lem:boundg}, the triangle inequality for the Euclidean norm yields, for $0\le \tau \le 1$,
\begin{equation}\label{estim3}
\sum_{j\in\Z^*} j^2 \bigl( \big|z_{j,l}^{ j}(\tau )\big| +  \big|z_{j,l}^{ -j}(\tau )\big| \bigr)^2 \le C .
\end{equation}
The estimates (\ref{estim1}) and (\ref{estim3}) prove the boundedness of
$\trn \bfz_l (\tau ) \trn $ for $\tau \in [0,1]$. The bound on the derivatives of $\bfz_l (\tau )$
is obtained in the same way after differentiating the equation (\ref{odezjlk}).
\qed
\end{proof}

It follows from the triangle
inequality that, for sufficiently small $\eps$, %
\begin{equation}\label{estimatez}
\trn \bfz(\tau) \trn \le C \eps , \qquad 0\le \tau\le 1.
\end{equation}
Moreover, it follows from the construction of Section~\ref{sect:construction}
that for $|k|\ne |j|$ we
have $z_{j,1}^k = z_{j,2}^k = 0$. This implies
\begin{equation}\label{estnondiag}
\Bigl( \sum_{j\in\Z^*}
  \Bigl(\sum_{k\in\Z} |j^2-k^2| |z_{j}^k(\tau) |\Bigr)^2 \Bigr)^{1/2}  \le C \eps^3 , \qquad 0\le \tau\le 1,
\end{equation}
which shows that the diagonal terms $z_j^j$ and $z_j^{-j}$ are dominant
in the modulated Fourier expansion (\ref{mfe-uj}). These two bounds are also
valid for any finite number of derivatives of $z_j^k$.

\subsection{Bounds for the defect}

As an approximation for the solution of (\ref{odezjkt}) we consider the truncated series \eqref{ansatz}
with coefficient functions $z_{j,l}^k (\tau ) $ constructed in Section~\ref{sect:construction}, and $\dot z_{j,N+1}^{\pm j}(\tau)\equiv 0$.
The defect, when $ z_j^k (\tau )$ is inserted into (\ref{odezjkt}), is given by
\begin{eqnarray}
d_j^k &=& \eps^2 \ddot  z_j^k + 2\iu k \eps c \dot z_j^k +
\bigl( \iu k \eps \dot c - k^2 c^2 \bigr)z_j^k  +
j^2 c^2 z_j^k    \label{defect}\\
&+&  a  \sum_{k_1+k_2+k_3=k}
\sum_{j_1+j_2+j_3=j}  z_{j_1}^{k_1}z_{j_2}^{k_2}z_{j_3}^{k_3} . \nonumber
\end{eqnarray}
By construction of the coefficient functions $z_{j,l}^k (\tau ) $ the coefficients of
$\eps^l$ vanish for $l \le N+1$. All that remains is
\begin{equation}\label{defectjk}
d_j^k = \eps^{N+2} \Bigl(\eps\ddot z_{j,N+1}^k +  \ddot z_{j,N}^k 
+2\iu k c \dot z_{j,N+1}^k +\iu k \dot c  z_{j,N+1}^k
+  a \!\!\sum_{l=N+2}^{3N+3} \!\!\eps^{l-N-2} g_{j,l}^k (\bfZ ) \Bigr)
\end{equation}
with $g_{j,l}^k (\bfZ ) $ defined in (\ref{gjlk}).

\begin{lemma}\label{lem:defect}
Under the assumptions of Theorem~\ref{thm:mfe}, there exists a constant $C_N$
such that, for $0\le \tau \le 1$, the defect is bounded by
\[
\Bigl(\, \sum_{j\in\Z^*} \Bigl( \sum_{k\in\Z} \big| d_{j}^k (\tau  ) \big| \Bigr)^2 \Bigr)^{1/2}
\le C_N \eps^{N+2} .
\]
\end{lemma}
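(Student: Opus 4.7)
The plan is to start from the explicit formula~\eqref{defectjk}, factor the common $\eps^{N+2}$ outside the target norm, and then bound each remaining summand via the triangle inequality in the norm
\[
\|\bfw\|_* := \Bigl(\sum_{j\in\Z^*}\Bigl(\sum_{k\in\Z}|w_j^k|\Bigr)^2\Bigr)^{1/2}
\]
that appears on the left-hand side. The first reduction I would make is the elementary comparison $\|\bfw\|_*\le 2\,\trn\bfw\trn$, obtained by splitting the inner sum over $k$ into the diagonal part $|k|=|j|$ (two terms, controlled using $j^2\ge 1$ for $j\in\Z^*$) and the off-diagonal part $|k|\ne|j|$ (controlled by $|j^2-k^2|\ge 1$, since $j^2-k^2$ is then a nonzero integer). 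With this in hand, the two purely linear summands $\ddot z_{j,N+1}^k$ and $\ddot z_{j,N}^k$ in \eqref{defectjk} are dispatched immediately by the derivative bounds in Lemma~\ref{lem:boundsz}.

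For the two summands carrying an extra factor of $k$, namely $2\iu kc\dot z_{j,N+1}^k$ and $\iu k\dot c\,z_{j,N+1}^k$, I would use the inequality $|k|\le|j^2-k^2|$ valid whenever $|k|\ne|j|$ (which follows from $\max(|j-k|,|j+k|)\ge |k|$ combined with the observation that both factors of $|j^2-k^2|=|j-k|\cdot|j+k|$ are at least $1$ when $|k|\ne|j|$), together with $|k|=|j|$ on the diagonal $k=\pm j$, so that
\[
\Bigl(\sum_{j\in\Z^*}\Bigl(\sum_k |k|\,|\dot z_{j,N+1}^k|\Bigr)^2\Bigr)^{1/2}\le C\,\trn\dot\bfz_{N+1}\trn,
\]
and analogously for $z_{j,N+1}^k$. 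The right-hand sides are again uniformly bounded on $[0,1]$ by Lemma~\ref{lem:boundsz}. The nonlinear summands $\eps^{l-N-2}g_{j,l}^k(\bfZ)$ for $N+2\le l\le 3N+3$ are handled directly by Lemma~\ref{lem:boundg}, since the triple norms $\trn\bfz_i\trn$ that play the role of $B$ in that lemma are uniformly bounded in $\tau\in[0,1]$ by Lemma~\ref{lem:boundsz}, the $l$-sum is finite, and $\eps^{l-N-2}\le 1$ for $\eps\le 1$.

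Assembling these pieces by the triangle inequality yields $\|\bfd(\tau)\|_*\le C_N\eps^{N+2}$, which is exactly the claim. I do not foresee a substantive obstacle here, since the work is a routine assembly once the auxiliary comparison $\|\cdot\|_*\le 2\,\trn\cdot\trn$ is recorded; the only point requiring a moment of care is the treatment of the $|k|$-weighted terms, where the diagonal/off-diagonal splitting and the inequality $|k|\le|j^2-k^2|$ are essential, in exact parallel with part~(a) of the proof of Lemma~\ref{lem:boundsz}.
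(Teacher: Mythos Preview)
Your proposal is correct and follows essentially the same approach as the paper, which simply states that the bound follows from the triangle inequality applied to \eqref{defectjk} together with Lemmas~\ref{lem:boundg} and~\ref{lem:boundsz}. You have spelled out the details the paper leaves implicit, in particular the comparison $\|\cdot\|_*\le 2\,\trn\cdot\trn$ and the diagonal/off-diagonal splitting for the $|k|$-weighted terms via $|k|\le|j^2-k^2|$ when $|k|\ne|j|$.
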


\begin{proof}
The bound is obtained by applying the triangle inequality to (\ref{defectjk}), and by using the
bounds of Lemmas~\ref{lem:boundsz} and~\ref{lem:boundg}.
\qed
\end{proof}

\subsection{Remainder term of the MFE}
\label{subsec:remainder}
With the obtained estimate for the defect we will bound the error between the exact solution $u(\cdot,t)$ and its approximation by the MFE,
$$
\widetilde u(x,t) = \iu \sum_{j\in\Z^*} \sum_{k\in\Z} z_j^k (\eps t ) \,\e^{\iu k \phi (\eps t)/\eps} \e^{\iu jx}
$$
with $z_j^k(\eps t)$ given by \eqref{ansatz}. For this we need first to bound the solutions of the linear wave equation 
$$
\partial_t^2 w = c(\eps t)^2 \partial_x^2 w
$$
on the interval $ s \le t \le \eps^{-1}$ and initial values given at $s$.
\begin{lemma} \label{lem:U}
 The evolution family $U(t,s)$, $0\le s \le t \le \eps^{-1}$, which maps $(w(\cdot,s), \partial_t w(\cdot,s))$ to $(w(\cdot,t), \partial_t w(\cdot,t))$, is  a bounded family of linear operators on $H^1_0(0,\pi)\times L_2(0,\pi)$.
\end{lemma}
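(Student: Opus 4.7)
The plan is to use the standard energy method adapted to the time-dependent coefficient $c(\eps t)$. For an arbitrary smooth initial datum $(w(\cdot,s),\partial_t w(\cdot,s)) \in H^1_0(0,\pi)\times L_2(0,\pi)$, the evolution family $U(t,s)$ is well defined by expanding in the sine basis $\sin(jx)$, which reduces the Cauchy problem to a countable family of independent second-order ODEs $\ddot w_j + c(\eps t)^2 j^2 w_j = 0$; each of these has a unique global smooth solution since $c$ is smooth and bounded away from zero, and linearity and the uniform-in-$j$ bounds derived below give the limiting distributional solution in $H^1_0\times L_2$.

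The main step is to introduce the natural (time-dependent) harmonic energy
$$
E(t) = \tfrac{1}{2}\| \partial_t w(\cdot,t)\|_{L_2}^2 + \tfrac{1}{2} c(\eps t)^2 \|\partial_x w(\cdot,t)\|_{L_2}^2
$$
and to differentiate in $t$. Using $\partial_t^2 w = c(\eps t)^2 \partial_x^2 w$ together with integration by parts in $x$ (the Dirichlet boundary conditions kill the boundary terms), the two ``wave-equation'' contributions cancel and one is left with
$$
\frac{\d}{\d t} E(t) = \eps\, c(\eps t)\, \dot c(\eps t)\, \|\partial_x w(\cdot,t)\|_{L_2}^2
\le \frac{2\eps \,|\dot c(\eps t)|}{c(\eps t)} \, E(t).
$$
Since $c$ and $\dot c$ are bounded and $c\ge c_0>0$, the factor in front of $E(t)$ is of size $\eps$, and Gronwall's inequality gives
$$
E(t) \le \exp\Bigl(\int_{\eps s}^{\eps t} \tfrac{2|\dot c(\tau)|}{c(\tau)}\,\d\tau\Bigr)\, E(s)
\le K\, E(s),\qquad 0\le s\le t\le \eps^{-1},
$$
with $K$ independent of $\eps$, $s$, $t$ because the integration variable ranges over a subset of $[0,1]$.

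Finally, the uniform bounds $c_0 \le c(\tau)\le c_{\max}$ on $[0,1]$ imply that $E(t)$ is equivalent, up to constants independent of $t$, to the squared $H^1_0\times L_2$ norm of $(w(\cdot,t),\partial_t w(\cdot,t))$. Combining this equivalence with the Gronwall estimate yields a bound
$$
\|w(\cdot,t)\|_{H^1_0}^2 + \|\partial_t w(\cdot,t)\|_{L_2}^2 \le C\bigl(\|w(\cdot,s)\|_{H^1_0}^2 + \|\partial_t w(\cdot,s)\|_{L_2}^2\bigr)
$$
with a constant $C$ independent of $0\le s\le t\le \eps^{-1}$ and of $\eps$, which is exactly the uniform boundedness of $U(t,s)$ on $H^1_0(0,\pi)\times L_2(0,\pi)$. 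There is essentially no obstacle here; the only point requiring a little care is checking that the boundary terms arising from integration by parts vanish, which they do thanks to the homogeneous Dirichlet boundary conditions preserved by the equation.
\qed
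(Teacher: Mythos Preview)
Your proof is correct and follows essentially the same energy method as the paper. The only cosmetic difference is that the paper differentiates the action $I(t)=E(t)/c(\eps t)$ rather than the energy $E(t)$ itself, obtaining $|\dot I|\le C\eps I$ in place of your $|\dot E|\le C\eps E$; since $c$ is bounded above and below, the two quantities are uniformly equivalent and the arguments are interchangeable.
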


\begin{proof}
 We consider 
 $$
 I(t) = \frac1{2c(\eps t)} \Bigl( \| \partial_t w(\cdot,t) \|^2 + c(\eps t)^2  \| \partial_x w(\cdot,t) \|^2 \Bigr),
 $$
 which has the time derivative
 \begin{align*}
 \frac\d{\d t}I(t) = & - \frac{\eps \dot c(\eps t)} {2 c(\eps t)} \, I(t) 
 + \frac1{c(\eps t)} \int_0^\pi \partial_t^2 w(x,t)\, \partial_t w(x,t)\, \d x 
 \\
 & + c(\eps t) \int_0^\pi \partial_t\partial_x w(x,t)\, \partial_x w(x,t)\, \d x 
 + \eps \dot c(\eps t)  \| \partial_x w(\cdot,t) \|^2.
 \end{align*}
 On using the wave equation and partial integration, the second and third term on the right-hand side cancel.
Hence we obtain
$$
\Bigl| \frac\d{\d t} I(t) \Bigr| \le C\eps I(t)
$$
and therefore 
$$
I(t)\le \const \, I(s), \qquad 0\le s \le t \le \eps^{-1}.
$$
Since $c(\tau)$ is bounded and bounded away from $0$, this yields the result.\qed
\end{proof}

\begin{lemma} The error between the exact solution $u$ of the nonlinear wave equation and its MFE approximation $\widetilde u$ satisfies
$$
 \| \widetilde u(\cdot, t) - u(\cdot, t)\|_{H^1} + \| \partial_t  \widetilde u(\cdot, t) -  \partial_t u(\cdot, t) \|_{L_2} \le  C(1+t)\eps^{N+2}, \qquad t \le \eps^{-1}.
$$
\end{lemma}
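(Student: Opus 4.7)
The plan is to treat $\widetilde u$ as an approximate solution of the nonlinear wave equation with a small forcing (the defect) and to control the error by a linearized analysis combined with a bootstrap.

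First, I would rewrite the statement of Lemma \ref{lem:defect} in physical space. Setting $D(x,t) = \iu\sum_{j\in\Z^*}\sum_{k\in\Z} d_j^k(\eps t)\,\e^{\iu k\phi(\eps t)/\eps}\e^{\iu jx}$, the construction of $\widetilde u$ implies
\begin{equation*}
\partial_t^2 \widetilde u = c(\eps t)^2 \partial_x^2 \widetilde u + a(\eps t)\widetilde u^3 + D.
\end{equation*}
By the triangle inequality $\sum_j |D_j(t)|^2 \le \sum_j \bigl(\sum_k |d_j^k|\bigr)^2$, so Lemma \ref{lem:defect} yields $\|D(\cdot,t)\|_{L_2}\le C_N\eps^{N+2}$ for $0\le t\le \eps^{-1}$. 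Subtracting the equation for $u$, the error $e = \widetilde u - u$ satisfies the linear wave equation with forcing,
\begin{equation*}
\partial_t^2 e = c(\eps t)^2 \partial_x^2 e + a(\eps t)(\widetilde u^2+\widetilde u u + u^2)\,e + D.
\end{equation*}
Initial errors are read off from \eqref{init-u} and \eqref{init-udot}: $e(\cdot,0)=0$ and $\partial_t e(\cdot,0)=\bigo(\eps^{N+2})$ in $L_2$, using Lemma \ref{lem:boundsz} to bound $\sum_j |\dot z_{j,N+1}^k(0)|$.

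Next I would apply the Duhamel formula with the linear evolution family $U(t,s)$ from Lemma \ref{lem:U}. In the product norm $\|(v,\partial_t v)\|_{H^1\times L_2}$, Lemma \ref{lem:U} gives a uniform bound $\|U(t,s)\|\le M$ for $0\le s\le t\le\eps^{-1}$. Setting $E(t) = \|e(\cdot,t)\|_{H^1} + \|\partial_t e(\cdot,t)\|_{L_2}$, Duhamel's formula produces
\begin{equation*}
E(t) \le M\,E(0) + M\!\int_0^t \!\bigl(\|D(\cdot,s)\|_{L_2} + \|a(\eps s)(\widetilde u^2+\widetilde u u+u^2)e(\cdot,s)\|_{L_2}\bigr)\d s.
\end{equation*}
The key point in one space dimension is the Sobolev embedding $H^1_0(0,\pi)\hookrightarrow L^\infty(0,\pi)$. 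From \eqref{estimatez} and Theorem \ref{thm:mfe} we have $\|\widetilde u(\cdot,t)\|_{H^1}=\bigo(\eps)$, so $\|\widetilde u(\cdot,t)\|_{L^\infty}=\bigo(\eps)$ as well.

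The main technical point is a bootstrap to control $\|u\|_{L^\infty}$. I would fix a constant $K$ and assume on the maximal subinterval $[0,t^\star]$ on which the smooth solution $u$ exists and satisfies $\|u(\cdot,s)\|_{H^1}\le K\eps$. On this interval, $\|\widetilde u^2+\widetilde u u+u^2\|_{L^\infty}\le C\eps^2$, so the integral inequality becomes
\begin{equation*}
E(t) \le M\,E(0) + C_N M\, t\,\eps^{N+2} + C\,M\,\eps^2 \int_0^t E(s)\,\d s.
\end{equation*}
Gronwall's lemma yields $E(t)\le \bigl(M\,E(0)+C_N M\,t\,\eps^{N+2}\bigr)\e^{CM\eps^2 t}$. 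For $t\le\eps^{-1}$ the exponential is bounded by a constant, giving $E(t)\le C(1+t)\eps^{N+2}$. Since $\|\widetilde u(\cdot,t)\|_{H^1}\le C_1\eps$ and the error $E(t)\le C\eps^{N+1}\ll\eps$ for $\eps$ sufficiently small, the bound $\|u(\cdot,t)\|_{H^1}\le K\eps$ is preserved strictly, so the bootstrap closes on the whole interval $[0,\eps^{-1}]$ and $u$ persists there. The same argument also furnishes the existence of $u$ in $H^1_0\times L_2$ claimed implicitly by the lemma, completing the proof.
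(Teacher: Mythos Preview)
Your argument is correct and follows essentially the same route as the paper: write $\widetilde u$ as a solution with forcing $D$, subtract, apply Duhamel with the linear evolution family $U(t,s)$ of Lemma~\ref{lem:U}, use the $\bigo(\eps^{N+2})$ bounds on the defect and the initial error, and close a bootstrap/Gronwall argument on $[0,\eps^{-1}]$. The only cosmetic difference is that the paper phrases the bootstrap as the Lipschitz-type condition $\|g(u)-g(\widetilde u)\|_{L_2}\le\eps\|u-\widetilde u\|_{H^1}$, whereas you bootstrap directly on $\|u(\cdot,t)\|_{H^1}\le K\eps$ and make the underlying Sobolev embedding $H^1_0\hookrightarrow L^\infty$ explicit; both lead to a Gronwall factor that stays bounded for $t\le\eps^{-1}$.
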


\begin{proof}
We have
\begin{align*}
 \partial_t^2 u &= c(\eps t)^2 \partial_x^2 u + g(u,\eps t) \\
  \partial_t^2 \widetilde u &= c(\eps t)^2 \partial_x^2  \widetilde u + g( \widetilde u,\eps t) + d
\end{align*}
with
$$
d(x,t) = \iu \sum_{j\in\Z} \sum_{k\in\Z} d_j^k(\eps t) \, \e^{\iu k \phi(\eps t)/\eps} \, \e^{\iu jx}.
$$
By the variation of constants formula, the remainder term of the MFE, $r= u - \widetilde u$, satisfies
\begin{align*}
\begin{pmatrix} r(\cdot,t) \\ \partial_t r(\cdot,t) \end{pmatrix} =
& \ U(t,0)\begin{pmatrix}
       r(\cdot,0) \\ \partial_t r(\cdot,0)
      \end{pmatrix}
\\ & +
\int_0^t U(t,s) \begin{pmatrix} 0 \\ g( u(\cdot,s),\eps s) - g(\widetilde u(\cdot,s),\eps s) - d(\cdot,s) \end{pmatrix}
\, \d s .
\end{align*}
Let $0<t^*\le\eps^{-1}$ be maximal such that
\begin{equation}\label{t-star}
\| g( u(\cdot,s),\eps s) - g(\widetilde u(\cdot,s),\eps s) \|_{L_2} \le \eps \| u(\cdot,s)-\widetilde u(\cdot,s)\|_{H^1}
\quad\hbox{ for }\quad 0\le s \le t^*.
\end{equation}
Then the bound of $U$ given by Lemma~\ref{lem:U}, the bounds for the initial error \eqref{init-u}--\eqref{init-udot}, a Gronwall inequality and the bound of Lemma~\ref{lem:defect} for the defect $d(\cdot,s)$ imply, for $0\le t\le t^*$,
$$
\| r(\cdot, t) \|_{H^1} + \| \partial_t r(\cdot, t) \|_{L_2} \le C' \eps^{N+2} + C'' t \max_{0\le s\le t} \| d(\cdot,s)\|_{L_2} \le C(1+t)\eps^{N+2}.
$$
Since this bound implies that \eqref{t-star} holds with strict inequality, for sufficiently small $\eps$, the maximality of $t^*$ yields that this is possible only if $t^*$ equals the endpoint $\eps^{-1}$ of the considered time interval.\qed
\end{proof}
Combining the above lemmas concludes the proof of Theorem~\ref{thm:mfe}.

\section{Adiabatic invariant}\label{sect:adiabatic}

We show that the system for the coefficients of the modulated Fourier expansion
has an almost-invariant that is close to the adiabatic invariant of the wave equation.
Throughout this section we work with the truncated series (\ref{ansatz}).

\subsection{An almost-invariant of the MFE}

We introduce the functions
\[
y_j^k (\tau ) = z_j^k (\tau ) \, \e^{\iu k \phi (\tau )/\eps} .
\]
For the construction of the MFE we have to work with the functions $z_j^k$, which are
smooth with derivatives bounded independently of $\eps$. Here, it is more convenient
to work with the highly oscillatory functions $y_j^k$. In terms of $y_j^k$
the system (\ref{defect}) can be written as
\begin{equation}\label{odeyjk}
\eps^2 \ddot y_j^k(\tau) + j^2 c(\tau)^2 y_j^k(\tau) + \nabla_{-j}^{-k} \calU (\bfy)(\tau) = d_j^k(\tau) \, \e^{\iu k \phi(\tau) /\eps}
\end{equation}
where
\[
\calU (\bfy ) = \frac {a}4 \sum_{j_1+\ldots +j_4=0}\,  \sum_{k_1+\ldots +k_4=0} 
y_{j_1}^{k_1}y_{j_2}^{k_2}y_{j_3}^{k_3}y_{j_4}^{k_4} ,
\]
and $\nabla_{-j}^{-k}$ denotes differentiation with respect to $y_{-j}^{-k}$. The convergence
of the infinite series in the definition of $\calU (\bfy )$ follows from the proof
of Lemma~\ref{lem:boundg} provided that $\trn \bfy \trn $ is bounded.

An almost-invariant is obtained in the spirit of Noether's theorem
from the invariance property
\[
\calU \bigl( (\e^{-\iu k \theta } y_j^k )_{j,k\in\Z} \bigr) =
\calU \bigl( (y_j^k )_{j,k\in\Z} \bigr)  , \qquad \theta\in\real.
\]
Differentiation of this relation with respect to $\theta$ at $\theta =0$ yields
\[
\sum_{j,k\in \Z} (\iu k ) \,y_{-j}^{-k}\, \nabla_{-j}^{-k} \calU (\bfy ) =  0 .
\]
Furthermore, the sum
$\sum_{j,k\in\Z} (\iu k) y_{-j}^{-k} j^2 c^2 y_j^k $ vanishes, because the term for $(j,k)$
cancels with that for $(-j,-k)$. Multiplying the identity (\ref{odeyjk}) with
$(\iu k ) y_{-j}^{-k}$ and summing over all $j$ and $k$ thus yields
\begin{equation}\label{eqndefect}
\eps^2 \sum_{j,k\in \Z} (\iu k ) y_{-j}^{-k} \ddot y_j^k =
\sum_{j,k\in \Z} (\iu k ) y_{-j}^{-k} d_j^k \, \e^{\iu k \phi /\eps} =
\sum_{j,k\in \Z} (\iu k ) z_{-j}^{-k} d_j^k .
\end{equation}

\begin{theorem}\label{thm:calI}
Consider the expression
\[
\calI (\bfy  , \dot \bfy)= \eps \sum_{j,k\in\Z}  (\iu k ) y_{-j}^{-k} \dot y_j^k .
\]
Under the assumptions of Theorem~\ref{lem:boundsz}
the functions $y_j^k (\tau ) = z_j^k (\tau ) \, \e^{\iu k \phi (\tau )/\eps} $, where
$z_j^k(\tau )$ represents a truncated series (\ref{ansatz}) with coefficients
constructed in Section~\ref{sect:construction},
then satisfy, for $0\le \eps t \le 1$,
\begin{equation}\label{thmest1}
\frac {\d}{\d t} \calI \bigl( \bfy (\eps t) , \dot \bfy (\eps t) \bigr) = \bigo (\eps^{N+3})
\end{equation}
and
$$ 
\calI \bigl( \bfy (\eps t) , \dot \bfy (\eps t) \bigr)
= 2c(\eps t) \sum_{j\in\Z} j^2 \big|z_j^j(\eps t) \big|^2 + \bigo (\eps^3 ) . 
$$ 
The constant symbolised by $\bigo (\cdot )$ depends on the truncation index $N$,
but it is independent of $0<\eps \le \eps^*$ (with $\eps^*$ sufficiently
small) and of $t$ as long as $0\le \eps t \le 1$.
\end{theorem}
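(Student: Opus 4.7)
The plan is to prove the two assertions separately, building on the identity (\ref{eqndefect}) and the $U(1)$-symmetry of $\calU$ already exploited in its derivation. For the derivative estimate (\ref{thmest1}) I differentiate $\calI(\bfy(\eps t),\dot\bfy(\eps t))=\eps\sum_{j,k}(\iu k)\,y_{-j}^{-k}\dot y_j^k$ with respect to $t$, using $\tau=\eps t$, to obtain
\[
\frac{\d}{\d t}\calI = \eps^2\sum_{j,k\in\Z}(\iu k)\bigl[\dot y_{-j}^{-k}\dot y_j^k + y_{-j}^{-k}\ddot y_j^k\bigr].
\]
The first sum vanishes under the index swap $(j,k)\leftrightarrow(-j,-k)$, which flips the sign of $\iu k$ while leaving $\dot y_j^k\,\dot y_{-j}^{-k}$ invariant, so the sum equals its own negative. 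The second sum is precisely the left-hand side of (\ref{eqndefect}), so $\frac{\d}{\d t}\calI = \sum_{j,k}(\iu k)\,z_{-j}^{-k}d_j^k$.

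To bound this defect pairing I would use the elementary inequality $|k|\le|k^2-j^2|$ valid for integers with $|k|\ne|j|$ (since $|k\pm j|\ge 1$ and their maximum is at least $|k|$). Setting $\alpha_j=\sum_k|k||z_{-j}^{-k}|$ and splitting the $k$-sum at $|k|=|j|$, the two resulting pieces of $\alpha_j$ are precisely controlled by the two summands in the norm (\ref{norm}), giving $(\sum_j\alpha_j^2)^{1/2}\le C\trn\bfz\trn\le C\eps$ by (\ref{estimatez}). Cauchy--Schwarz in $j$ combined with Lemma~\ref{lem:defect} then yields
\[
\Bigl|\sum_{j,k}(\iu k)\,z_{-j}^{-k}d_j^k\Bigr|\le\Bigl(\sum_j\alpha_j^2\Bigr)^{1/2}\Bigl(\sum_j\bigl(\sum_k|d_j^k|\bigr)^2\Bigr)^{1/2}\le C_N\,\eps^{N+3}.
\]

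For the explicit form of $\calI$, I substitute $\dot y_j^k=(\dot z_j^k+\iu kc\, z_j^k/\eps)\,\e^{\iu k\phi/\eps}$ and $y_{-j}^{-k}=z_{-j}^{-k}\,\e^{-\iu k\phi/\eps}$; the exponentials cancel, and using $z_{-j}^{-k}=-\overline{z_j^k}$ one finds
\[
\calI = \eps\sum_{j,k}(\iu k)\,z_{-j}^{-k}\dot z_j^k + c\sum_{j,k}k^2|z_j^k|^2.
\]
In the second sum, the diagonal part $|k|=|j|$ equals $2c\sum_j j^2|z_j^j|^2$ by the further symmetry $|z_j^{-j}|=|z_j^j|$, while the off-diagonal part is $\bigo(\eps^6)$ since $k^2\le(k^2-j^2)^2$ for integer $|k|\ne|j|$ and (\ref{estnondiag}) gives a bound of order $\eps^3$ on $(\sum_j(\sum_k|j^2-k^2||z_j^k|)^2)^{1/2}$. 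The first sum is $\bigo(\eps^3)$ by the same Cauchy--Schwarz device, now pairing $\bfz$ against $\dot\bfz$: both have $\trn\cdot\trn$-norm $\bigo(\eps)$ by (\ref{estimatez}) and the derivative extension of Lemma~\ref{lem:boundsz}, and the diagonal contribution $\eps\sum_j j\bigl(|z_{-j}^{-j}||\dot z_j^j|+|z_{-j}^{j}||\dot z_j^{-j}|\bigr)$ is already of the required size.

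The main technical obstacle is the careful bookkeeping of two very different scales present in $\bfz$: the diagonal modes $z_j^{\pm j}$ contribute at size $\eps$ in the $\trn\cdot\trn$-norm, whereas the off-diagonal modes $z_j^k$, $|k|\ne|j|$, are a factor $\eps^2$ smaller in view of (\ref{estnondiag}) (a consequence of $z_{j,1}^k=z_{j,2}^k=0$ off-diagonal). The elementary inequality $|k|\le|k^2-j^2|$ for integer $|k|\ne|j|$ is the device that funnels this sharper off-diagonal decay into Cauchy--Schwarz against the norm $\trn\cdot\trn$, yielding both the $\bigo(\eps^3)$ remainder in the asymptotic formula and the $\bigo(\eps^{N+3})$ bound on $\frac{\d}{\d t}\calI$. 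Once this observation is in place, the remaining estimates are routine.
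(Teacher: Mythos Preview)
Your proof is correct and follows essentially the same approach as the paper's: differentiate $\calI$, use the antisymmetry under $(j,k)\leftrightarrow(-j,-k)$ to kill the $\dot y\dot y$ term, invoke (\ref{eqndefect}), and bound the defect pairing via Cauchy--Schwarz using $\trn\bfz\trn=\bigo(\eps)$ and Lemma~\ref{lem:defect}; for the explicit form, substitute $\eps\dot y_j^k=(\eps\dot z_j^k+\iu k c z_j^k)\e^{\iu k\phi/\eps}$, use $z_{-j}^{-k}=-\overline{z_j^k}$, and discard off-diagonal and $\eps\dot z$ terms via (\ref{estnondiag}). You spell out the inequality $|k|\le|k^2-j^2|$ and the diagonal/off-diagonal split more explicitly than the paper does, but the argument is the same.
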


\begin{proof}
Differentiation of $\calI \bigl( \bfy (\eps t) , \dot \bfy (\eps t) \bigr)$
with respect to $t$ yields the lefthand expression of (\ref{eqndefect}), because
the sum $\sum_{j,k\in\Z}  (\iu k ) \dot y_{-j}^{-k} \dot y_j^k $ vanishes due to the
cancellation of the terms for $(j,k)$ and $(-j,-k)$. Applying
the Cauchy--Schwarz inequality to the righthand side of (\ref{eqndefect}),
using the estimate for the defect
(Lemma~\ref{lem:defect}) and the estimate $\trn \bfz (\tau ) \trn = \bigo (\eps )$
from (\ref{estimatez}) shows that the righthand side of (\ref{eqndefect})
is bounded by $\bigo (\eps^{N+3})$.
This proves the estimate (\ref{thmest1}).

Differentiating $y_j^k (\tau ) = z_j^k (\tau ) \, \e^{\iu k \phi (\tau )/\eps} $ with respect
to time $t$ yields
\[
\eps \dot y_j^k  = \bigl( \eps \dot z_j^k + \iu k c z_j^k \bigr) \e^{\iu k \phi /\eps} .
\]
Consequently, we have
\[
\calI (\bfy  , \dot \bfy) = - \sum_{j,k\in\Z}  c\, k^2  z_{-j}^{-k} z_j^k + \bigo (\eps^3 )
= 2\, c \sum_{j\in\Z^*} j^2 |z_j^j |^2 + \bigo (\eps^3 ).
\]
The last equality follows from (\ref{estnondiag}) 
and from the fact that
 $z_{-j}^{-k} = -\overline{z_j^k}$, which follows from
$u_{-j}=-u_j$ and 
$z_j^{-k} =  \overline {z_j^k}$. 
This proves the second statement of the theorem.
\qed
\end{proof}

\subsection{Connection with the action of the wave equation}

We consider the harmonic energy divided by the wave speed along the MFE approximation $\widetilde u(x,t)$ to the solution $u(x,t)$ as given by Theorem~\ref{thm:mfe},
\begin{equation}\label{adiabatict}
\begin{array}{rcl}
\widetilde I(t) &=& \displaystyle \frac 1{2c(\eps t)} \Bigl( \| \partial_t \widetilde u(\cdot,t) \|^2 + 
c(\eps t)^2 \|\partial_x \widetilde u(\cdot,t) \|^2 \Bigr) \\[4mm]
&=& \displaystyle  \frac 1{2c(\eps t)} \Bigl( \sum_{j\in\Z} \Bigl|\frac\d{\d t} \widetilde u_j(t)\Bigr|^2 +c(\eps t)^2 
\sum_{j\in\Z} j^2 | \widetilde u_j(t)|^2  \Bigr).
\end{array}
\end{equation}

\begin{lemma}\label{lem:action}
Let $\widetilde u_j(t) = \sum_{k\in\Z} z_j^k (\eps t ) \, \e^{\iu k \phi (\eps t )/\eps}$, where
$z_j^k (\tau )$ is the truncated series \eqref{ansatz}.
In terms of these coefficients the
action \eqref{adiabatict} satisfies
\[
\widetilde I(t)
= 2c(\eps t) \sum_{j\in\Z} j^2 \big|z_j^j(\eps t) \big|^2 + \bigo (\eps^3 ) . 
\]
\end{lemma}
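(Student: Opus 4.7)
The plan is to substitute the modulated Fourier expansion into \eqref{adiabatict} and isolate the contribution of the diagonal modes $k=\pm j$. I decompose
\[
\widetilde u_j = D_j + N_j, \qquad \dot{\widetilde u}_j = \widetilde D_j + \widetilde N_j + \eps R_j,
\]
with the diagonal ($|k|=|j|$) parts
\[
D_j = z_j^j \e^{\iu j \phi/\eps} + z_j^{-j} \e^{-\iu j \phi/\eps}, \qquad \widetilde D_j = \iu j c \bigl(z_j^j \e^{\iu j\phi/\eps} - z_j^{-j} \e^{-\iu j\phi/\eps}\bigr),
\]
the non-diagonal parts $N_j$, $\widetilde N_j$ collecting the $|k|\ne|j|$ terms of $\widetilde u_j$ and of $\iu k c z_j^k$ respectively, and the slow-time correction $\eps R_j = \eps \sum_k \dot z_j^k \e^{\iu k\phi/\eps}$. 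The symmetry $z_j^{-j}=\overline{z_j^j}$ makes each of these summands real.

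The heart of the lemma is an exact algebraic cancellation inside the diagonal block. A direct computation, again using $z_j^{-j}=\overline{z_j^j}$, gives
\[
c^2 j^2 D_j^2 = c^2 j^2 \Bigl( (z_j^j)^2 \e^{2\iu j\phi/\eps} + 2|z_j^j|^2 + \overline{(z_j^j)^2} \e^{-2\iu j\phi/\eps}\Bigr),
\]
\[
\widetilde D_j^2 = -c^2 j^2 \Bigl( (z_j^j)^2 \e^{2\iu j\phi/\eps} - 2|z_j^j|^2 + \overline{(z_j^j)^2} \e^{-2\iu j\phi/\eps}\Bigr),
\]
so the oscillatory contributions in $\e^{\pm 2\iu j\phi/\eps}$ cancel and $\widetilde D_j^2 + c^2 j^2 D_j^2 = 4 c^2 j^2 |z_j^j|^2$. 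Summing over $j$ and dividing by $2c$ then yields precisely the advertised leading term $2c\sum_j j^2|z_j^j|^2$.

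All remaining contributions must be shown to be $\bigo(\eps^3)$. For integers with $|k|\ne|j|$ one has $|j|,|k|\le |j^2-k^2|$; combined with the non-diagonal bound \eqref{estnondiag} (whose analogue for $\dot z_j^k$ also holds, because $z_{j,1}^k = z_{j,2}^k = 0$ for $|k|\ne|j|$), the derivative part of Lemma~\ref{lem:boundsz} for the diagonal modes, and Cauchy--Schwarz, this delivers
\[
\sum_j j^2 N_j^2 \le C\eps^6, \qquad \sum_j \widetilde N_j^2 \le C\eps^6, \qquad \sum_j \eps^2 R_j^2 \le C\eps^4,
\]
while $\sum_j j^2 D_j^2$ and $\sum_j \widetilde D_j^2$ are each $\le C\eps^2$ by \eqref{estimatez}. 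All cross terms arising from expanding $|\widetilde u_j|^2$ and $|\dot{\widetilde u}_j|^2$ and summing over $j$ are then controlled by Cauchy--Schwarz; the dominant one is $2\sum_j \widetilde D_j\,\eps R_j = \bigo(\eps\cdot\eps^2) = \bigo(\eps^3)$. The main difficulty is purely book-keeping in this step: the slow-time derivative of the diagonal part of $z_j^k$ is only $\bigo(\eps)$, not $\bigo(\eps^2)$, in the $\trn\cdot\trn$-norm, so the prefactor $\eps$ in $\eps R_j$ must be paired sharply with the $\bigo(\eps)$ size of $\widetilde D_j$ to reach $\bigo(\eps^3)$; the oscillatory cancellation in the diagonal block is the genuinely structural ingredient that makes the fast phases $\e^{\pm 2\iu j\phi/\eps}$ invisible to the leading-order action.
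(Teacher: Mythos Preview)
Your proof is correct and follows essentially the same line as the paper's: isolate the diagonal modes $k=\pm j$, use the symmetry $z_j^{-j}=\overline{z_j^j}$ to cancel the oscillatory $\e^{\pm 2\iu j\phi/\eps}$ contributions, and bound all remaining terms via \eqref{estimatez}, \eqref{estnondiag}, and Cauchy--Schwarz. The only cosmetic difference is that the paper packages your diagonal cancellation as the single identity $|a-\bar a|^2+|a+\bar a|^2=4|a|^2$ with $a=z_j^j\e^{\iu j\phi/\eps}$, whereas you expand the two squares explicitly; your treatment of the $\eps R_j$ cross term is in fact more carefully spelled out than in the paper.
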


\begin{proof}
Differentiating $\widetilde u_j (t)$ with respect to $t$ yields, with $\tau =\eps t$,
\[
\frac\d{\d t}  \widetilde u_j (t) = \sum_{k\in\Z} \Bigl(\eps \dot z_j^k (\tau ) + \iu k c(\tau ) z_j^k(\tau ) \Bigr)
\e^{\iu k \phi (\tau )/\eps} .
\]
From the estimate (\ref{estnondiag}) we thus obtain
\[
\sum_{j\in\Z} \Bigl|\frac\d{\d t}  u_j (t)\Bigr|^2 = c(\tau )^2 \sum_{j\in\Z} j^2 \big| z_j^j (\tau ) \e^{\iu j \phi (\tau )/\eps}
-  z_j^{-j} (\tau ) \e^{-\iu j \phi (\tau )/\eps} \big|^2 + \bigo (\eps^3 ) .
\]
Similarly, we get
\[
\sum_{j\in\Z} j^2 | u_j (t)|^2 = \sum_{j\in\Z} j^2 \big| z_j^j (\tau ) \e^{\iu j \phi (\tau )/\eps}
+  z_j^{-j} (\tau ) \e^{-\iu j \phi (\tau )/\eps} \big|^2 + \bigo (\eps^3 ) .
\]
Using the identity $|a-\overline a |^2 + |a+\overline a |^2 = 4 |a|^2$, a combination of the
last two formulas gives
\[
\sum_{j\in\Z} \Bigl|\frac\d{\d t}  u_j (t)\Bigr|^2 +c(\tau )^2 
\sum_{j\in\Z} j^2 | u_j|^2 = 4\,c(\tau )^2 \sum_{j\in\Z} j^2 \big| z_j^j (\tau ) \big|
+ \bigo (\eps^3 ) .
\]
Dividing this equation by $2c(\tau )$ proves the statement of the lemma.
\qed
\end{proof}

\subsection{Transitions in the almost-invariant}
To be able to cover a longer time interval by patching together many intervals of length $\eps^{-1}$, we need the following result.

\begin{lemma}\label{lem:trans}
 Under the conditions of Theorem~\ref{thm:mfe}, let $z_j^k(\tau)$ for $0\le\tau=\eps t\le 1$ be the coefficient functions of the MFE as in Theorem~\ref{thm:mfe} for initial data $(u(\cdot,0),\partial_t u(\cdot,0))$, and let $y_j^k(\tau)=z_j^k(\tau)\e^{\iu k\phi(\tau)/\eps} $ and $\bfy(\tau)=\bigl( y_j^k(\tau)\bigr)$. Let further
 $\widetilde\bfy(\tau)=\bigl( \widetilde y_j^k(\tau)\bigr)$ be the corresponding functions of the MFE for $1\le \tau\le 2$ to the initial data $(u(\cdot,\eps^{-1}),\partial_t u(\cdot,\eps^{-1}))$, constructed as in Theorem~\ref{thm:mfe}. Then,
 $$
 \big|\calI\bigl(\bfy(1),\dot\bfy(1)\bigr) - \calI\bigl(\widetilde\bfy(1),\dot{\widetilde\bfy}(1)\bigr)\big|\le C\eps^{N+2},
 $$
 where $C$ is independent of $\eps$.
\end{lemma}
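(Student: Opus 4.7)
The plan is to extend the first MFE past the patching point $\tau=1$ and perform a perturbation analysis of the recursive construction of Section~\ref{sect:construction} to compare it with the second MFE, then exploit the quadratic structure of $\calI$.

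First, I would continue the iterative construction of Section~\ref{sect:construction} from $[0,1]$ to $[0,2]$: the algebraic relations for $|k|\ne|j|$ and the linear first-order ODEs for $|k|=|j|$ can be solved on any bounded $\tau$-interval without obstruction, and the bounds of Lemma~\ref{lem:boundsz} together with the defect estimate of Lemma~\ref{lem:defect} remain valid on the larger interval with $\eps$-independent constants. Denote this extended first MFE by $\bfz(\tau)$ on $[0,2]$, with associated $\bfy(\tau)=(z_j^k(\tau)\e^{\iu k\phi(\tau)/\eps})$.

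Second, I would compare the two states that underlie the two MFEs at $\tau=1$. The extended first MFE at $\tau=1$ represents $(\widetilde u(\cdot,\eps^{-1}),\partial_t\widetilde u(\cdot,\eps^{-1}))$, while by \eqref{init-u}--\eqref{init-udot} the second MFE represents $(u(\cdot,\eps^{-1}),\partial_t u(\cdot,\eps^{-1}))$ up to a consistency error of order $\eps^{N+2}$. The remainder bound \eqref{r-bound} evaluated at $t=\eps^{-1}$ then gives that the difference of the two reconstructed states is of order $\eps^{N+1}$ in $H^1_0(0,\pi)\times L_2(0,\pi)$.

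Third, this state mismatch must be translated into a bound on the coefficient differences $\delta\bfz(1)=\widetilde\bfz(1)-\bfz(1)$. Running the recursive construction of Section~\ref{sect:construction} on the difference, at $l=1$ the initial data enter divided by $\eps$ via \eqref{initialbjl}, so an initial $H^1_0\times L_2$ mismatch of order $\eps^{N+1}$ produces $\sum_j j^2|\delta z_{j,1}^{\pm j}(1)|^2\le C\eps^{2N}$. Since $\delta z_j^k=\eps\,\delta z_{j,1}^k+\eps^2\,\delta z_{j,2}^k+\cdots$, an induction on $l$ that repeats the algebraic and ODE steps from the proof of Lemma~\ref{lem:boundsz} for the linearized recursion yields
\[
\trn\bfz(1)-\widetilde\bfz(1)\trn + \eps\,\trn\dot\bfz(1)-\dot{\widetilde\bfz}(1)\trn \le C\eps^{N+1}.
\]

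Finally, substituting $y_j^k(1)=z_j^k(1)\,\e^{\iu k\phi(1)/\eps}$ in the definition of $\calI$ cancels the exponential phases pairwise and gives
\[
\calI(\bfy,\dot\bfy) = c(1)\sum_{j,k}k^2|z_j^k(1)|^2 + \eps\sum_{j,k}(\iu k)z_{-j}^{-k}(1)\dot z_j^k(1),
\]
and analogously for $\widetilde\bfy$. Their difference is bilinear in $(\bfz,\widetilde\bfz)$ and $\delta\bfz$; splitting into diagonal ($|k|=|j|$) and off-diagonal ($|k|\ne|j|$) contributions and applying Cauchy--Schwarz together with $\trn\bfz\trn,\trn\widetilde\bfz\trn\le C\eps$ (the off-diagonal parts are even bounded by $C\eps^3$ by \eqref{estnondiag}) yields
\[
\bigl|\calI(\bfy(1),\dot\bfy(1))-\calI(\widetilde\bfy(1),\dot{\widetilde\bfy}(1))\bigr| \le C\eps\cdot\eps^{N+1} = C\eps^{N+2},
\]
which is the claim.

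The hard part will be the third step. Each level $l$ of the construction is linear in the input data with coefficients involving the already-controlled lower-order modulations, but one must track carefully that the $1/\eps$ factor in the initial conditions \eqref{initialbjl} at $l=1$ is compensated by the overall $\eps^l$ power in the truncated series \eqref{ansatz}, so that the aggregated coefficient mismatch emerges at exactly the order $\eps^{N+1}$ in the $\trn\cdot\trn$-norm rather than being amplified.
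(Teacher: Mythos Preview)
Your proposal is correct and follows essentially the same route as the paper: extend the first MFE past $\tau=1$, observe that the two MFEs are built from initial states differing by $\bigo(\eps^{N+1})$ in $H^1_0\times L_2$ via the remainder bound \eqref{r-bound}, rerun the recursion of Lemma~\ref{lem:boundsz} on the differences to obtain $\trn\bfz-\widetilde\bfz\trn\le C\eps^{N+1}$ (together with the same bound for derivatives), and conclude from the bilinear structure of $\calI$ combined with $\trn\bfz\trn=\bigo(\eps)$. The paper's own proof is terser---it simply asserts that taking differences in the recursion yields the $\eps^{N+1}$ bound ``omitting the details''---so your explicit tracking of the $1/\eps$ factor in \eqref{initialbjl} and your expansion of $\calI$ in the $z$-variables fill in precisely what the paper leaves to the reader.
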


\begin{proof}
First we note that $\bfz(\tau+1)$ contains the modulation functions that are uniquely constructed (up to $\bigo(\eps^{N+2})$) by starting from $(\widetilde u(\cdot,\eps^{-1}),\partial_t{\widetilde u}(\cdot,\eps^{-1}))$, where $\widetilde u$ is again the approximation by  the truncated modulated Fourier expansion (\ref{mfe2})
without the remainder term. On the other hand, $\widetilde\bfz(t)$ contains the modulation functions constructed by starting from the exact solution values at time $t=\eps^{-1}$. By Theorem \ref{thm:mfe} we have $u=\widetilde u+r$ with the remainder estimate (\ref{r-bound}). We thus need to estimate $\bfz- \widetilde\bfz$  at $\tau=1$ in terms of 
$\|u(\cdot,\eps^{-1})-\widetilde u(\cdot,\eps^{-1})\|_{H^1} + \|\partial_t u(\cdot,\eps^{-1})-\partial_t \widetilde u(\cdot,\eps^{-1})\|_{L_2}$.
We proceed similarly to the proof of Lemma~\ref{lem:boundsz}, taking differences in the recursions instead of direct bounds. Omitting the details, we obtain
$$
\trn \bfz(\tau) - \widetilde\bfz(\tau) \trn  \le C\eps^{N+1} \quad\hbox{ for }\quad 1 \le\tau \le 2,
$$
and bounds of the same type hold for any fixed number of derivatives of $\bfz(\tau) - \widetilde\bfz(\tau)$.
Together with the definition of $\calI$ and the bounds of 
Lemma~\ref{lem:boundsz}, this yields the stated bound.    
\end{proof}

\subsection{Long-time conservation of the adiabatic invariant}
\label{subsec:patch}

For $n=0,1,2,\dots$, let $\bfy_n(t)$ contain the summands of the modulated Fourier expansion starting from $(u(\cdot,n\eps^{-1}),\partial_t u(\cdot,n\eps^{-1}))$. As long as the adiabatic invariant satisfies $I(u(\cdot,n\eps^{-1}),\partial_t u(\cdot,n\eps^{-1}),n\eps^{-1})\le I(u(\cdot,0),\partial_t u(\cdot,0),0)+C_0\eps^2$, Theorem~\ref{thm:calI} yields for $0\le \theta \le 1$
$$
\big|\calI \bigl(\bfy_n (n+\theta),\dot \bfy_n (n+\theta)\bigr) - \calI \bigl(\bfy_n (n),\dot \bfy_n (n)\bigr) \big| \le C\eps^{N+2}.
$$
By Lemma~\ref{lem:trans},
$$
 \big| \calI\bigl(\bfy_{n}(n),\dot\bfy_{n}(n)\bigr)-\calI\bigl({\bfy}_{n-1}(n),\dot\bfy_{n-1}(n)\bigr)\big|  \le C \eps^{N+2}.
 $$
 Summing up these estimates over $n$ and applying the triangle inequality yields, for $0\le \theta \le 1$,
$$
\big|\calI\bigl(\bfy_{n}(n+\theta),\dot\bfy_{n}(n+\theta)\bigr) -  \calI \bigl(\bfy_0 (0),\dot \bfy_0 (0)\bigr) \big| \le 2(n+1)C\eps^{N+2}.
$$
By Theorem~\ref{thm:calI} and Lemma~\ref{lem:action},  we have at $t=(n+\theta)\eps^{-1}$
$$
\big|\calI \bigl(\bfy_n (n+\theta),\dot \bfy_n (n+\theta)\bigr) - \widetilde I(t)\big| \le C\eps^3,
$$
where $\widetilde I(t)$ is the action corresponding to the MFE approximation $\widetilde u$ starting from the exact solution at time $n\eps^{-1}$.
Moreover,  by the remainder estimate of Theorem~\ref{thm:mfe} and since the $H^1_0\times L_2$ norm of $(\widetilde u(\cdot,t), \partial_t \widetilde u(\cdot,t))$ is bounded by $\bigo(\eps)$, we have
$$
|\widetilde I(t) -I(t) | \le C\eps^{N+2},
$$
where $I(t)$ is the action for the solution $u(\cdot,t)$ as in \eqref{action}.
Combining these bounds at $t$ and at $0$ we obtain for $t\le\eps^{-N}$
$$
\big|I (t)-I(0))\big| \le 2C'' t \eps^{N+3} + 2C'\eps^3\le C\eps^3.
$$
This is the bound of Theorem~\ref{thm:main-t}.

\section*{Part II: Proof of Theorem~\ref{thm:main-t-3}}

We consider only the spatially three-dimensional case, since the modifications required for the two-dimensional case are obvious.

\section{Modulated Fourier expansion for the short-time solution approximation}
\label{sec:mfe-t-3}

\subsection{Spatial Fourier expansion}

We extend the initial values $u(\cdot,0)$ and $\partial_t u(\cdot,0)$ to odd functions on the
extended rectangular box $[-\ell_1,\ell_1]\times[-\ell_2,\ell_2]\times[-\ell_3,\ell_3]$. Since all terms in (\ref{wave}) are odd powers of $u$,
the solution of the equation remains an odd function
for all $t$. In this section we  write $\bfx$ instead of $x$ for the spatial variable and consider the Fourier series
\[
u(\bfx,t) = \iu\sum_{\bfj\in\Z^3} u_\bfj(t)\, \e^{\iu\bfj\circ\bfx} \quad\hbox{ with}\quad
\bfj\circ\bfx= \frac{j_1\pi x_1}{\ell_1}+ \frac{j_2\pi x_2}{\ell_2}+\frac{j_3\pi x_3}{\ell_3}
\]
for $\bfj=(j_1,j_2,j_3)$ and $\bfx=(x_1,x_2,x_3)$. We obtain real $u_\bfj$, and $u_{(-j_1,j_2,j_3)}=-u_{(j_1,j_2,j_3)}$ and similarly in the second and third component. In particular, $u_\bfj=0$ if one of the components of $\bfj$ is zero. 
The system of differential equations for the Fourier coefficients is given by 
\begin{equation}\label{wavevarf-3}
\frac{\d^2 u_\bfj}{\d t^2} = - c(\eps t)^2  \Omega_\bfj^2 u_\bfj 
- a(\eps t) \sum_{\bfj_1+\bfj_2+\bfj_3=\bfj} u_{\bfj_1}u_{\bfj_2}u_{\bfj_3}  ,
\end{equation}
where the sum is over all $\bfj_1,\bfj_2,\bfj_3\in\Z^3$ satisfying $\bfj_1+\bfj_2+\bfj_3=\bfj$, and
$\Omega_\bfj>0$ is defined by
$$
\Omega_\bfj^2 =  \Bigl( \frac{j_1\pi}{\ell_1}\Bigr)^2+ \Bigl( \frac{j_2\pi}{\ell_2}\Bigr)^2+  \Bigl( \frac{j_3\pi}{\ell_3}\Bigr)^2.
$$
The assumptions
on the initial conditions become
$$ 
\sum_{\bfj\in\Z^3}  \Omega_\bfj^2 |u_\bfj(0)|^2 = \bigo (\eps^2 ) ,\qquad
\sum_{\bfj\in\Z^3}  \Big| \frac{\d}{\d t}u_\bfj(0)\Big|^2 = \bigo (\eps^2 ) .
$$

\subsection{Statement of result}
We denote by $\Z^{*,3}$ the subset of those  $\bfj\in\Z^3$ that have all components different from zero.
We consider the linear arrangement $0<\omega_1<\omega_2<\dots$ of the different frequencies among the $\Omega_\bfj$ for  $\bfj\in\Z^{*,3}$. We let $m(\bfj)$ be the integer such that 
$$
\omega_{m(\bfj)}=\Omega_\bfj.
$$
For a sequence of integers $\bfk=(k_1,k_2,\dots)$ with only finitely many nonzero entries, we denote 
$$
\| \bfk \| = \sum_{m\ge 1} |k_m|, \qquad \bfk\cdot\bfomega= \sum_{m\ge 1} k_m\omega_m.
$$
We let $\langle \bfj \rangle=(0,\dots,0,1,0,\dots)$ be the sequence that has an entry $1$ at the $m(\bfj)$-th position and zero entries else, so that $\langle \bfj \rangle \cdot \bfomega = \omega_{m(\bfj)}=\Omega_\bfj$.

For the Fourier coefficients of $u(\cdot,t)$ we consider the MFE
\begin{equation}\label{mfe-uj-3}
u_\bfj(t)  \approx  \sum_{\bfk\in\calK_\bfj} z_\bfj^\bfk (\eps t ) \,\e^{\iu (\bfk\cdot\bfomega) \phi (\eps t)/\eps} ,
\end{equation}
where the phase function $\phi$ is given by \eqref{phit} and the modulation functions $z_\bfj^\bfk$  are to be determined. The summation is over the set
\begin{equation}\label{Kj}
\calK_\bfj = \bigl\{\, \bfk=(k_1,k_2,\dots)\in \Z^\N\,: \,  \bigl| |\bfk\cdot\bfomega|-\Omega_\bfj \bigr| \ge \eps^{1-\alpha}  \,\bigr\} \cup \{ \pm\jvec \},
\end{equation}
where we are interested in choosing a small $\alpha>0$.
This set is chosen to deal with almost-resonances: if $\bigl||\bfk\cdot\bfomega|-\Omega_\bfj \bigr| < \eps^{1-\alpha}$, then 
$\e^{\iu (\bfk\cdot\bfomega) \phi (\tau)/\eps}= w_\bfj^\bfk(\tau)\e^{\iu \Omega_\bfj \phi (\tau)/\eps}$
with
$$
w_\bfj^\bfk(\tau) = \e^{\iu ((\bfk\cdot\bfomega)-\Omega_\bfj) \phi (\tau)/\eps},
$$
where the $q$-th derivative of $w_\bfj^\bfk(\tau)$ is of magnitude $\bigo(\eps^{-q\alpha})$, so that  
$w_\bfj^\bfk(\tau)$ is changing more slowly than $\e^{\iu \Omega_\bfj \phi (\tau)/\eps}$.
\begin{theorem}
 \label{thm:mfe-3} Let the integer $N\ge 4$ be arbitrary and let $\alpha=1/N$ in the definition \eqref{Kj} of the set $\calK_\bfj$.
In the situation of Theorem~\ref{thm:main-t-3}, the solution $u(\bfx,t)$ of \eqref{wave} admits a modulated Fourier expansion 
$$ 
u(\bfx,t)= \iu\sum_{\bfj\in\Z^{*,3}} \:\sum_{\bfk\in\calK_\bfj\atop \|\bfk\|\le N+1} 
z_{\bfj}^\bfk (\eps t)\, \e^{\iu (\bfk\cdot\bfomega) \phi(\eps t)/\eps}\, \e^{\iu\, \bfj\circ\bfx} + r(\bfx,t),
\qquad 0\le t \le \eps^{-1},
$$ 
where the phase function $\phi(\tau)$ satisfies $\frac{\d\phi}{\d \tau}(\tau)=c(\tau)$ and the modulation functions $z_\bfj^\bfk(\tau)$ satisfy $z_\bfj^{-\bfk}=\overline{z_\bfj^\bfk}=-\overline{z_{-\bfj}^{-\bfk}}$ and are bounded for $0\le\tau\le 1$, together with their first and second derivatives,  by
\begin{align*}
& \Bigl(\sum_{\bfj\in\Z^{*,3}} \bigl(\Omega_\bfj ( |z_{\bfj}^{\jvec}(\tau)| +  |z_{\bfj}^{-\jvec}(\tau)| )\bigr)^2\Bigr)^{1/2} \le C_1\eps
 \\
& \Bigl(\sum_{\bfj\in\Z^{*,3}}\Bigr( \sum_{\bfk\ne\pm\jvec} (\Omega_\bfj+|\bfk\cdot\bfomega|) |z_{\bfj}^\bfk(\tau) | \Bigr)^2 \Bigr)^{1/2} \le C_2\,\eps^{2+\alpha} .
\end{align*}
The remainder term is bounded by
$$ 
\|r(\cdot, t)\|_{H^1_0} + \|\partial_t r(\cdot,t)\|_{L_2}  \le C_3\,(1+t)\,\eps^{4-\alpha}, \qquad 0\le t \le \eps^{-1}.
$$ 
The constants $C_1,C_2,C_3$ are independent of $\eps$, but depend on $N$, on the  bound \eqref{init} of the initial values and on bounds of $c(\tau)$ and $a(\tau)$ and their derivatives.
\end{theorem}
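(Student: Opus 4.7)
The proof would parallel the construction of Theorem~\ref{thm:mfe}, but must address the almost-resonances among the multi-dimensional frequencies $\Omega_\bfj$. First I would insert the ansatz \eqref{mfe-uj-3} into \eqref{wavevarf-3} and compare coefficients of $\e^{\iu(\bfk\cdot\bfomega)\phi(\tau)/\eps}$ to obtain the modulation system
\begin{align*}
&\eps^2 \ddot z_\bfj^\bfk + 2\iu(\bfk\cdot\bfomega)\,c\,\eps\,\dot z_\bfj^\bfk + \bigl(\iu(\bfk\cdot\bfomega)\eps\dot c - (\bfk\cdot\bfomega)^2 c^2 + \Omega_\bfj^2 c^2\bigr) z_\bfj^\bfk \\
&\qquad = -a \sum_{\bfj_1+\bfj_2+\bfj_3=\bfj}\sum_{\bfk_1+\bfk_2+\bfk_3=\bfk} z_{\bfj_1}^{\bfk_1}z_{\bfj_2}^{\bfk_2}z_{\bfj_3}^{\bfk_3}.
\end{align*}
Writing $z_\bfj^\bfk(\tau) = \sum_{l=1}^{N+1}\eps^l z_{\bfj,l}^\bfk(\tau)$ as in \eqref{ansatz} and comparing powers of $\eps$, I would obtain the triangular system analogous to \eqref{odezjlk}, with $z_{\bfj,l}^\bfk$ set to zero whenever $\bfk\notin\calK_\bfj$ or $\|\bfk\|>N+1$, so that these omitted modes contribute only to the defect.

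The coefficient functions would then be built iteratively in $l$ as in Section~\ref{sect:construction}. For $\bfk=\pm\langle\bfj\rangle$ the equation reduces to a first-order linear ODE for $z_{\bfj,l-1}^{\pm\langle\bfj\rangle}$, with initial values determined by a $2\times 2$ linear system analogous to \eqref{initialbjl}--\eqref{initialbjl2}. For $\bfk\ne\pm\langle\bfj\rangle$ with $\bfk\in\calK_\bfj$ the equation is algebraic, and the non-resonance
\[
\bigl|\Omega_\bfj^2-(\bfk\cdot\bfomega)^2\bigr| = \bigl|\Omega_\bfj-|\bfk\cdot\bfomega|\bigr|\,\bigl(\Omega_\bfj+|\bfk\cdot\bfomega|\bigr) \ge \eps^{1-\alpha}\bigl(\Omega_\bfj+|\bfk\cdot\bfomega|\bigr)
\]
built into the definition \eqref{Kj} of $\calK_\bfj$ lets us divide by $c^2\bigl(\Omega_\bfj^2-(\bfk\cdot\bfomega)^2\bigr)$ at the cost of a factor $\eps^{-(1-\alpha)}$, yielding precisely the weight $\bigl(\Omega_\bfj+|\bfk\cdot\bfomega|\bigr)^{-1}$ that matches the off-diagonal piece of the norm in the theorem statement.

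The central technical step is the three-dimensional analog of Lemma~\ref{lem:boundg}. Since $\sum_\bfj \Omega_\bfj^{-2}$ diverges in three dimensions, the 1D summability trick used after \eqref{gjlk} fails and must be replaced by the Sobolev embedding $H^1_0\hookrightarrow L^6$, which in Fourier becomes the trilinear bound
\[
\Bigl(\sum_{\bfj}\Big|\!\sum_{\bfj_1+\bfj_2+\bfj_3=\bfj}\! v^{(1)}_{\bfj_1}v^{(2)}_{\bfj_2}v^{(3)}_{\bfj_3}\Big|^2\Bigr)^{1/2} \le C \prod_{i=1}^3 \Bigl(\sum_{\bfj}\Omega_\bfj^2|v^{(i)}_\bfj|^2\Bigr)^{1/2}.
\]
Combining this estimate with the $\ell^1$-in-$\bfk$ structure of the norm (weighted by $\Omega_\bfj+|\bfk\cdot\bfomega|$), I would bound the right-hand side $g_\bfj^\bfk$ and propagate the estimates order-by-order. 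Because the nonlinearity is cubic and the diagonal modulations start at order $\eps$, the first off-diagonal contribution arises at $l=3$; together with the single small-divisor factor $\eps^{-(1-\alpha)}$ this gives the claimed size $\eps^{2+\alpha}$ for the off-diagonal part, while the diagonal bound $\eps$ follows exactly as in the 1D case. Bounds on first and second derivatives come by differentiating the recursion.

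Finally, the remainder estimate would come from bounding the defect and repeating the variation-of-constants and Gronwall argument of Section~\ref{subsec:remainder}. The defect splits into a truncation-in-$l$ piece (an $\eps^{N+2}$ term inflated by accumulated small-divisor factors) and an omission piece coming from the modes with $\bfk\notin\calK_\bfj$ or $\|\bfk\|>N+1$, which must be estimated by feeding the constructed modulations into the cubic nonlinearity and exploiting that the omitted frequencies $\bfk$ are almost-resonant. With $\alpha=1/N$ these two contributions balance at $\eps^{4-\alpha}$. The evolution-family bound of Lemma~\ref{lem:U} transfers verbatim to three dimensions, and the Sobolev embedding above supplies the local Lipschitz estimate for $u\mapsto u^3$ needed to close the Gronwall argument in $H^1_0\times L_2$. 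The genuinely hard step is precisely this balance: the threshold $\eps^{1-\alpha}$ in $\calK_\bfj$ has to be chosen large enough that the small-divisor loss from the algebraically-solved modes is affordable, yet small enough that the mass of almost-resonant modes discarded into the defect remains summable and of the same overall order. Tuning this tradeoff is what forces $\alpha$ to scale like $1/N$ and what caps the attainable remainder at $\eps^{4-\alpha}$.
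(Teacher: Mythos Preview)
Your outline captures much of the architecture correctly—the $\eps$-expansion, the algebraic/ODE dichotomy, the replacement of the 1D summability trick by the Sobolev embedding $H^1\hookrightarrow L^6$, and the Gronwall remainder argument. But there is a genuine gap in how you treat the near-resonant modes $\bfk\notin\calK_\bfj$. You propose to set $z_\bfj^\bfk=0$ for such $\bfk$ and let the resulting nonlinear output fall into the defect. This does not work: whenever $\bfk_1+\bfk_2+\bfk_3=\bfk$ with each $\bfk_i\in\calK_{\bfj_i}$ but $\bfk\notin\calK_\bfj$, the cubic term $z_{\bfj_1}^{\bfk_1}z_{\bfj_2}^{\bfk_2}z_{\bfj_3}^{\bfk_3}$ is of size $\eps^3$, and ``exploiting that $\bfk$ is almost-resonant'' does nothing to reduce its $L_2$ norm. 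Your defect would be $O(\eps^3)$, giving a remainder $O(\eps^2)$ at $t=\eps^{-1}$, which is useless.

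The paper's key additional device is to \emph{absorb} these near-resonant nonlinear contributions into the diagonal equation rather than discard them. Since $|(\bfk\cdot\bfomega)\mp\Omega_\bfj|<\eps^{1-\alpha}$, one writes $\e^{\iu(\bfk\cdot\bfomega)\phi/\eps}=w_\bfj^\bfk(\tau)\,\e^{\pm\iu\Omega_\bfj\phi/\eps}$ with the slow factor $w_\bfj^\bfk(\tau)=\e^{\iu((\bfk\cdot\bfomega)\mp\Omega_\bfj)\phi(\tau)/\eps}$, whose $q$-th $\tau$-derivative is only $O(\eps^{-q\alpha})$. The ODE for $z_{\bfj,l}^{\pm\jvec}$ then carries on its right-hand side the full sum $\sum_{\bfk:|(\bfk\cdot\bfomega)\mp\Omega_\bfj|<\eps^{1-\alpha}} w_\bfj^\bfk\, g_{\bfj,l}^\bfk(\bfZ)$, so nothing near-resonant is thrown away. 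The price is that derivatives of $w_\bfj^\bfk$ inject losses $\eps^{-q\alpha}$ into the higher-derivative bounds on the diagonal coefficients; the actual tradeoff that fixes $\alpha=1/N$ is between these $\eps^{-\alpha}$ losses and the small-divisor losses $\eps^{-(1-\alpha)}$ accumulated through the off-diagonal recursion—\emph{not} between a truncation piece and an omission piece of the defect, as you describe. With the absorption in place, the defect consists only of truncation-in-$l$ terms and is genuinely $O(\eps^{4-1/N})$.
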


This result will be proved in the course of this section. Note that no resonance or non-resonance conditions are imposed on the frequencies $\Omega_\bfj$.

\subsection{Formal modulated Fourier expansion (MFE) in time}

We denote again differentiation with
respect to the slow time $\tau=\eps t$ by a dot. We insert the ansatz
(\ref{mfe-uj-3}) into (\ref{wavevarf-3}), and compare the coefficients of
$\e^{\iu (\bfk\cdot\bfomega) \phi (\tau )/\eps}$.  
The
functions $z_\bfj^\bfk (\tau )$ thus have to satisfy the following system: for $\bfj\in\Z^{*,3}$ and $\bfk\in\calK_\bfj$ with $\bfk\ne\pm\jvec$,
\begin{eqnarray}
&&\eps^2 \ddot z_\bfj^\bfk + 2\iu (\bfk\cdot\bfomega) \eps c \dot z_\bfj^\bfk +
\bigl( \iu (\bfk\cdot\bfomega) \eps \dot c - \bigl((\bfk\cdot\bfomega)^2-\Omega_\bfj^2\bigr) c^2 \bigr)z_\bfj^\bfk  
\label{odezjkt-k}\\
&&\qquad\qquad\qquad ~~ = -a  \sum_{\bfj_1+\bfj_2+\bfj_3=\bfj} \sum_{\bfk_1+\bfk_2+\bfk_3=\bfk} \,
  z_{\bfj_1}^{\bfk_1}z_{\bfj_2}^{\bfk_2}z_{\bfj_3}^{\bfk_3} , \nonumber
\end{eqnarray}
and for $\pm\jvec$,
\begin{eqnarray}
&&\eps^2 \ddot z_\bfj^{\pm\jvec} \pm \iu \Omega_\bfj \eps \bigl( 2c \dot z_\bfj^{\pm\jvec}  +\dot c z_\bfj^{\pm\jvec} \bigr)
\label{odezjkt-j}\\
&&\qquad ~~ = - a  \sum_{ \bfk\,:\,|(\bfk\cdot\bfomega)\mp\Omega_\bfj | < \eps^{1-\alpha} }
w_\bfj^{\bfk}
\sum_{\bfj_1+\bfj_2+\bfj_3=\bfj} 
\sum_{\bfk_1+\bfk_2+\bfk_3=\bfk} \,
 z_{\bfj_1}^{\bfk_1}z_{\bfj_2}^{\bfk_2}z_{\bfj_3}^{\bfk_3} , \nonumber
\end{eqnarray}
where the innermost sums are over all $\bfk_i\in\calK_{\bfj_i}$ with ${\bfk_1+\bfk_2+\bfk_3=\bfk}$. Note that the outer sum in \eqref{odezjkt-j} is over $\bfk$ that are not in $\calK_\bfj$ with the exception of $\pm\jvec$.
The initial conditions 
yield
$$ 
u_\bfj(0) = \sum_{\bfk\in\calK_\bfj} z_\bfj^\bfk(0),\qquad
\frac{\d}{\d t} u_\bfj(0) = \sum_{\bfk\in\calK_\bfj} \Bigl( \iu (\bfk\cdot\bfomega) c (0)\,z_\bfj^\bfk(0) + \eps \dot z_\bfj^\bfk(0) \Bigr) .
$$

\subsection{Construction of the coefficient functions for the MFE}
\label{sect:construction-3}

We aim at constructing an approximate solution for the system (\ref{odezjkt-k})--(\ref{odezjkt-j})
having a defect of magnitude $\bigo(\eps^{4-\alpha})$, which in the next section will turn out to be the permissible magnitude that yields near-conservation of the adiabatic invariant over times $t=o(\eps^{-3+\alpha})$, a time scale that we cannot improve even with a smaller defect. We make an ansatz as a truncated series
in powers of $\eps$,
\begin{equation}\label{ansatz-3}
z_\bfj^\bfk (\tau ) = \sum_{l = 1}^{N+1} \eps^l z_{\bfj,l}^\bfk (\tau ) ,
\end{equation}
for a given truncation number $N$.
It is convenient to use the convention that $z_{\bfj,l}^\bfk (\tau )\equiv 0$ for $l\le 0$ and also for $\bfk\notin\calK_\bfj$.
Inserting (\ref{ansatz-3}) into (\ref{odezjkt-k})--(\ref{odezjkt-j}) and comparing  powers of $\eps$ yields,
for $\bfj\in\Z^{*,3}$ and $\bfk\in\calK_\bfj$ with $\bfk\ne\pm\jvec$,
\begin{equation}
\ddot z_{\bfj,l-2}^\bfk + 2\iu (\bfk\cdot\bfomega) c \dot z_{\bfj,l-1}^\bfk +
 \iu (\bfk\cdot\bfomega) \dot c z_{\bfj,l-1}^\bfk - \bigl((\bfk\cdot\bfomega)^2-\Omega_\bfj^2\bigr) c^2 z_{\bfj,l}^\bfk  
 = g_{\bfj,l}^\bfk (\bfZ ) 
\label{odezjkt-kl}
\end{equation}
and for $\pm\jvec$,
\begin{equation}
\label{odezjkt-jl}
\ddot z_{\bfj,l-2}^{\pm\jvec} \pm \iu \Omega_\bfj  \bigl( 2c \dot z_{\bfj,l-1}^{\pm\jvec}  +\dot c z_{\bfj,l-1}^{\pm\jvec} \bigr)
 =   \sum_{ \bfk\,:\,|(\bfk\cdot\bfomega)\mp\Omega_\bfj | < \eps^{1-\alpha} } 
w_\bfj^\bfk g_{\bfj,l}^\bfk (\bfZ ), 
\end{equation}
where for $\bfZ=(\bfz_1,\dots,\bfz_{l-2})$ with $\bfz_i=(z_{\bfj,i}^\bfk)$,
\begin{equation}\label{gjlk-3}
g_{\bfj,l}^\bfk (\bfZ ) =  -a \sum_{l_1+l_2+l_3=l} \,\sum_{\bfj_1+\bfj_2+\bfj_3=\bfj}\,\sum_{\bfk_1+\bfk_2+\bfk_3=\bfk}
  z_{\bfj_1,l_1}^{\bfk_1}z_{\bfj_2,l_2}^{\bfk_2}z_{\bfj_3,l_3}^{\bfk_3} . 
\end{equation}
For $\bfk\ne \pm \jvec$, equation (\ref{odezjkt-kl}) represents a linear equation
for $z_{\bfj,l}^\bfk$, and (\ref{odezjkt-jl}) is a first order linear differential equation
for $z_{\bfj,l-1}^{\pm\jvec}$. Initial values for this differential equation are obtained from
\begin{align}\label{initialbjl-3}
&\frac1\eps \,u_\bfj(0)= \sum_{\bfk\in\calK_\bfj} z_{\bfj,1}^\bfk(0),\qquad
\frac1\eps\,\frac{\d}{\d t} u_\bfj(0)   = \sum_{\bfk\in\calK_\bfj} \Bigl( \iu (\bfk\cdot\bfomega) c (0)\,z_{\bfj,1}^\bfk(0) \Bigr) 
\\
&0 = \sum_{\bfk\in\calK_\bfj} z_{\bfj,l}^\bfk(0),\qquad
0 = \sum_{\bfk\in\calK_\bfj} \Bigl( \iu (\bfk\cdot\bfomega) c (0)\,z_{\bfj,l}^\bfk(0) + \dot z_{\bfj,l-1}^\bfk(0) \Bigr) , \qquad l\ge 2.\label{initialbjl2-3}
\end{align}

The construction of the coefficient functions is done iteratively with increasing~$l$, 
as in Section~\ref{sect:construction}. We note that $z_{\bfj,l}^\bfk$ can differ from zero only for $\|\bfk\|\le l$.
Moreover, the initial values $z_\bfj^\bfk(0)$ of \eqref{ansatz-3} satisfy
\begin{align}
 \label{init-u-3}
 &\sum_{\bfk\in\calK_\bfj} z_\bfj^\bfk(0) - u_\bfj(0) = 0 \\
 \label{init-udot-3}
 &\sum_{\bfk\in\calK_\bfj} \Bigl( \iu (\bfk\cdot\bfomega) c(0) z_\bfj^\bfk(0)+\eps \dot z_\bfj^\bfk(0) \Bigr) - \frac\d{\d t} u_\bfj(0) = \eps^{N+2} \sum_{\bfk\in\calK_\bfj} \dot z_{\bfj,N+1}^\bfk(0).
\end{align}

\subsection{Bounds for the coefficient functions of the MFE}

We denote by $\calZ$ the space of all $\bfz = (z_{\bfj}^\bfk)_{\bfj\in\Z^{*,3},\bfk\in\calK_\bfj,\|\bfk\|\le N+1}$ with finite norm
\begin{equation}
\trn \bfz \trn^2 = \sum_{\bfj\in\Z^{*,3}}\Bigl( \!\!\sum_{\bfk\in\calK_\bfj\atop \|\bfk\|\le N+1} \bigl(|\bfk\cdot\bfomega|+\Omega_\bfj \bigr)
|z_{\bfj}^\bfk | \Bigr)^2  .
\label{norm-3}
\end{equation}

\begin{lemma}\label{lem:boundg-3}
For $\bfz_i=(z_{\bfj,i}^\bfk)\in\calZ$ $(i=1,2,3)$ we let, for $\bfj\in\Z^{*,3}$ and $\bfk=(k_1,k_2,\dots)\in\Z^\N$,
$$
h_\bfj^\bfk(\bfz_1,\bfz_2,\bfz_3) = \sum_{\bfj_1+\bfj_2+\bfj_3=\bfj}\,\sum_{\bfk_1+\bfk_2+\bfk_3=\bfk}
  z_{\bfj_1,1}^{\bfk_1}z_{\bfj_2,2}^{\bfk_2}z_{\bfj_3,3}^{\bfk_3} .
$$
Then,
$$
\Bigl( \sum_{\bfj\in\Z^{*,3}} \Bigl( \sum_{\bfk\in\Z^\N} \big|h_\bfj^\bfk(\bfz_1,\bfz_2,\bfz_3)\big|\Bigr)^2 \Bigr)^{1/2} 
\le
C \,\trn \bfz_1 \trn\, \trn \bfz_2 \trn\,\trn \bfz_3 \trn.
$$
\end{lemma}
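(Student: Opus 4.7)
I would pass to the spatial side by introducing auxiliary functions whose Fourier coefficients are the $\ell^1$-sums of the $|z_{\bfj,i}^\bfk|$ in $\bfk$, and then bound the resulting triple convolution using H\"older's inequality combined with the three-dimensional Sobolev embedding $H^1\hookrightarrow L_6$. This substitutes for the weighted $\ell^1$-trick of Lemma~\ref{lem:boundg}, which exploited convergence of $\sum_{j\in\Z^*}j^{-2}$ and is unavailable here since $\sum_{\bfj\in\Z^{*,3}}\Omega_\bfj^{-2}$ diverges.

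\emph{Step 1 (reduction to a spatial convolution).} The triangle inequality, together with the decoupling of the constraint $\bfk_1+\bfk_2+\bfk_3=\bfk$ after summation over $\bfk\in\Z^\N$, gives
\begin{equation*}
\sum_{\bfk\in\Z^\N}\bigl|h_{\bfj}^{\bfk}(\bfz_1,\bfz_2,\bfz_3)\bigr|
\le \sum_{\bfj_1+\bfj_2+\bfj_3=\bfj} Y_{\bfj_1,1}\,Y_{\bfj_2,2}\,Y_{\bfj_3,3},
\end{equation*}
where $Y_{\bfj,i}:=\sum_{\bfk}|z_{\bfj,i}^{\bfk}|$, extended by zero outside $\Z^{*,3}$. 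Setting $v_i(\bfx):=\sum_{\bfj\in\Z^3}Y_{\bfj,i}\,\e^{\iu\,\bfj\circ\bfx}$, this right-hand side is precisely the $\bfj$-th Fourier coefficient of the pointwise product $v_1v_2v_3$.

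\emph{Step 2 (Parseval, H\"older, Sobolev).} Parseval's identity on the box $Q$, followed by H\"older's inequality and the three-dimensional Sobolev embedding, yields
\begin{equation*}
\Bigl(\sum_{\bfj\in\Z^{*,3}}\Bigl(\sum_{\bfk}|h_{\bfj}^\bfk|\Bigr)^2\Bigr)^{1/2}
\le C\,\|v_1v_2v_3\|_{L_2(Q)}
\le C\prod_{i=1}^3\|v_i\|_{L_6(Q)}
\le C'\prod_{i=1}^3\|v_i\|_{H^1(Q)}.
\end{equation*}
Finally, the pointwise inequality $\Omega_\bfj Y_{\bfj,i}\le\sum_{\bfk}(\Omega_\bfj+|\bfk\cdot\bfomega|)|z_{\bfj,i}^{\bfk}|$ combined with the definition~\eqref{norm-3} of $\trn\cdot\trn$ and with $\Omega_\bfj\ge\omega_1>0$ on $\Z^{*,3}$ gives $\|v_i\|_{H^1(Q)}\le C\,\trn\bfz_i\trn$, which completes the proof.

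\emph{Main obstacle.} The single power of $\Omega_\bfj$ supplied by the $\trn\cdot\trn$-norm is critical in three dimensions: a direct weighted Young-type convolution argument based solely on $\Omega_\bfj Y_{\bfj,i}\in\ell^2$ fails at the endpoint $\ell^{6/5}\ast\ell^{6/5}\ast\ell^{6/5}\to\ell^2$, since $\sum_\bfj\Omega_\bfj^{-3}$ diverges logarithmically. The Sobolev embedding $H^1\hookrightarrow L_6$ is the natural tool that overcomes this critical loss, and identifying it as the right substitute for the one-dimensional trick is the main conceptual step; everything else is routine bookkeeping that parallels the proof of Lemma~\ref{lem:boundg}.
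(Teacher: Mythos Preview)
Your proof is correct and follows essentially the same route as the paper: define auxiliary functions $v_i$ (the paper's $f_i$) with Fourier coefficients $\sum_\bfk|z_{\bfj,i}^\bfk|$, apply Parseval to pass to $\|v_1v_2v_3\|_{L_2}$, then use H\"older and the Sobolev embedding $H^1\hookrightarrow L_6$, and finally bound $\|v_i\|_{H^1}$ by $\trn\bfz_i\trn$. Your surrounding commentary on why the one-dimensional weighted-$\ell^1$ trick fails in three dimensions is accurate and helpful, though not present in the paper's terse proof.
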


\begin{proof} By the Parseval formula, the left-hand side of the desired inequality is bounded by the $L_2(Q)$ norm of the function
$
  f_{1}(\bfx) \, f_{2}(\bfx)  \, f_{3}(\bfx) ,
$
where $f_i(\bfx)$ is the function with $\bfj$-th Fourier coefficient $\sum_{\bfk\in\Z^\N} |z_{\bfj,i}^\bfk|$.
We then have
$$
\| f_{1} f_{2} f_{3} \|_{L_2} \le \| f_{1} \|_{L_6} \, \| f_{2} \|_{L_6} \, \| f_{3} \|_{L_6}
\le C \,  \| f_{1} \|_{H^1} \, \| f_{2} \|_{H^1} \, \| f_{3} \|_{H^1},
$$
where we have used the H\"older inequality and the Sobolev embedding $H^1(Q)\subset L^6(Q)$, valid for dimension $d\le 3$. We further have
$$
\| f_i \|_{H^1} = \Bigl(\sum_{\bfj\in\Z^{*,3} } \Bigl( \sum_{\bfk\in\Z^\N} \Omega_\bfj\,|z_{\bfj,i}^\bfk| \Bigr)^2\Bigr)^{1/2} \le \trn {\bfz}_i \trn,
$$
which yields the result. \qed
\end{proof}

We note that $g_{\bfj,l}^\bfk(\bfZ)$ of \eqref{gjlk-3} is given, for $\bfZ=(\bfz_1,\dots,\bfz_{l-2})$, by the finite sum
$$
g_{\bfj,l}^\bfk(\bfZ) = -a \sum_{l_1+l_2+l_3=l} h_\bfj^\bfk(\bfz_{l_1},\bfz_{l_2},\bfz_{l_3}).
$$

Since we obtain different bounds for diagonal coefficient functions $z_{\bfj}^{\jvec}$ and non-diagonal coefficient functions $z_{\bfj}^{\bfk}$ with $\bfk\ne\pm\jvec$, we split
$$
\trn \bfz \trn^2 = |\bfz|_{\text{diag}}^2 + |\bfz|_{\text{off-diag}}^2,
$$
where
\begin{align*}
|\bfz|_{\text{diag}}^2 &= \sum_{\bfj\in\Z^{*,3}}\Bigl( 2\Omega_\bfj \bigl( |z_{\bfj}^{\jvec}| + |z_{\bfj}^{-\jvec}| \bigr)\Bigr)^2
\\
|\bfz|_{\text{off-diag}}^2 &= \sum_{\bfj\in\Z^{*,3}}\Bigl(\sum_
{\bfk\in\calK_\bfj, \bfk\ne\pm\jvec \atop \|\bfk\|\le N+1} 
\bigl(|\bfk\cdot\bfomega|+\Omega_\bfj \bigr)
|z_{\bfj}^\bfk | \Bigr)^2  .
\end{align*}

\begin{lemma}\label{lem:boundsz-3}
Under the conditions of Theorem~\ref{thm:mfe-3}, the seminorms 
$|\bfz_l^{(q)}|_{\text{\rm diag}}$ and $ |\bfz_l^{(q)}|_{\text{\rm off-diag}}$ of the
$q$-th derivative of the coefficient functions $\bfz_l=(z_{\bfj,l}^\bfk)$ constructed in Section~\ref{sect:construction-3} are bounded for $0<\alpha\le \min(\frac14,\frac1{(l-1)})$ as stated in the table below, uniformly for $0\le\tau\le 1$. For each $(l,q)$, the  entry in the first table gives the bound for $|\bfz_l^{(q)}|_{\text{\rm diag}}$, and that in the second table for $ |\bfz_l^{(q)}|_{\text{\rm off-diag}}$, up to a constant independent of~$\eps$. In particular, the coefficient functions $\bfz=(z_\bfj^\bfk)$ of \eqref{ansatz-3} satisfy the bounds $|\bfz|_{\text{\rm diag}}=\bigo(\eps)$ and $|\bfz|_{\text{\rm off-diag}}=\bigo(\eps^{2+\alpha})$.
\end{lemma}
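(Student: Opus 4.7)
The plan is to follow the inductive structure of Lemma~\ref{lem:boundsz}, but with two substantive modifications needed to handle the higher-dimensional difficulties: (i) the cubic sum is controlled by the trilinear estimate of Lemma~\ref{lem:boundg-3}, which replaces the $1/j^2$-summability used in the one-dimensional case by a Sobolev embedding; (ii) the off-diagonal algebraic inversions now lose a factor of $\eps^{\alpha-1}$ instead of being of order one, which is why the bounds at level $l$ require $\alpha \le 1/(l-1)$ to keep the losses controlled. The base case $l=1$ is immediate: $z_{\bfj,0}^\bfk \equiv 0$ and $g_{\bfj,1}^\bfk = 0$ force off-diagonal terms to vanish, while the diagonal initial data $z_{\bfj,1}^{\pm\jvec}(0)$ are obtained from the $2\times 2$ linear system (\ref{initialbjl-3}) and bounded via the initial-energy assumption \eqref{init}.

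Given inductive bounds on $\bfz_{l'}^{(q)}$ for all $l' < l$, I would first bound the off-diagonal coefficients. For $\bfk\in\calK_\bfj$ with $\bfk\ne\pm\jvec$, the definition of $\calK_\bfj$ yields the frequency gap
$$
|(\bfk\cdot\bfomega)^2 - \Omega_\bfj^2|
=|\bfk\cdot\bfomega - \Omega_\bfj|\cdot|\bfk\cdot\bfomega + \Omega_\bfj|
\ge \eps^{1-\alpha}\bigl(|\bfk\cdot\bfomega|+\Omega_\bfj\bigr),
$$
so solving (\ref{odezjkt-kl}) algebraically for $z_{\bfj,l}^\bfk$ costs only a factor $\eps^{\alpha-1}$:
$$
\bigl(|\bfk\cdot\bfomega|+\Omega_\bfj\bigr)|z_{\bfj,l}^\bfk|
\le C\,\eps^{\alpha-1}\Bigl(|\ddot z_{\bfj,l-2}^\bfk| + |\bfk\cdot\bfomega|\bigl(|\dot z_{\bfj,l-1}^\bfk|+|z_{\bfj,l-1}^\bfk|\bigr) + |g_{\bfj,l}^\bfk(\bfZ)|\Bigr).
$$
Taking the $|\cdot|_{\text{off-diag}}$-seminorm and invoking Lemma~\ref{lem:boundg-3} on the cubic term (together with the inductive bounds on the lower-order derivative terms) yields the required bound for $|\bfz_l|_{\text{off-diag}}$; the corresponding estimate for the $q$-th derivative is obtained by differentiating (\ref{odezjkt-kl}) in $\tau$ before inverting, which costs no further $\eps^{-\alpha}$ factor since the denominator $((\bfk\cdot\bfomega)^2-\Omega_\bfj^2)c^2$ is $\eps$-independent up to the $\tau$-derivatives of $c$.

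For the diagonal coefficients $z_{\bfj,l}^{\pm\jvec}$, I would mirror parts (b)--(c) of the proof of Lemma~\ref{lem:boundsz}. The linear $2\times 2$ system (\ref{initialbjl-3})--(\ref{initialbjl2-3}) fixes the initial data $z_{\bfj,l}^{\pm\jvec}(0)$ in terms of the off-diagonal initial bounds just obtained together with the already-bounded $\dot z_{\bfj,l-1}^\bfk(0)$, producing a bound of the right size at $\tau=0$. The variation-of-constants formula applied to the first-order ODE (\ref{odezjkt-jl}), read with $l$ augmented by $1$ exactly as in the one-dimensional case, then propagates the bound to all $0\le\tau\le 1$. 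The forcing involves a sum over $\bfk$ with $|\bfk\cdot\bfomega\mp\Omega_\bfj|<\eps^{1-\alpha}$, which is controlled by Lemma~\ref{lem:boundg-3} together with the fact that the modulating factors $w_\bfj^\bfk(\tau)$ are uniformly bounded on $[0,1]$ since they are pure phases.

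The main obstacle is the careful bookkeeping of the $\eps^{-\alpha}$ loss factors that accumulate through the recursion: each off-diagonal inversion at a given level costs $\eps^{\alpha-1}$, and feeding the resulting $\dot z_{l-1}$ or $\ddot z_{l-2}$ back into the equation for $z_l$ can compound these losses through the cubic coupling of Lemma~\ref{lem:boundg-3}. The tables in the statement precisely encode this bookkeeping, and the restriction $\alpha\le 1/(l-1)$ is exactly what guarantees that the per-level contribution $\eps^l\bfz_l$ never grows large enough to spoil the final estimates. Summing over $l=1,\ldots,N+1$ in the ansatz (\ref{ansatz-3}), the diagonal sum is dominated by $l=1$, yielding $|\bfz|_{\text{diag}}=\bigo(\eps)$, while the off-diagonal sum is dominated by $l=3$ (the first level at which off-diagonal terms become non-zero, since $g_{\bfj,l}^\bfk$ and the lower-order right-hand side terms vanish for $l\le 2$), yielding $|\bfz|_{\text{off-diag}}=\bigo(\eps^{2+\alpha})$.
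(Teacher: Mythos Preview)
Your overall inductive scheme matches the paper's approach closely: algebraic inversion of \eqref{odezjkt-kl} with the $\eps^{\alpha-1}$ loss for the off-diagonal part, the $2\times 2$ system \eqref{initialbjl-3}--\eqref{initialbjl2-3} for diagonal initial data, variation of constants on \eqref{odezjkt-jl} (with $l$ shifted) for the diagonal evolution, and Lemma~\ref{lem:boundg-3} for the cubic coupling. That skeleton is correct.

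There is, however, one genuine missing mechanism. You write that the factors $w_\bfj^\bfk(\tau)=\e^{\iu((\bfk\cdot\bfomega)\mp\Omega_\bfj)\phi(\tau)/\eps}$ ``are uniformly bounded on $[0,1]$ since they are pure phases,'' and you attribute \emph{all} $\eps^{-\alpha}$ losses in the tables to the off-diagonal inversion. This is not right: the $q$-th $\tau$-derivative of $w_\bfj^\bfk$ is of size $\bigo(\eps^{-q\alpha})$, because $|(\bfk\cdot\bfomega)\mp\Omega_\bfj|/\eps$ can be as large as $\eps^{-\alpha}$. When you differentiate the diagonal equation \eqref{odezjkt-jl} to bound higher derivatives of $z_{\bfj,l}^{\pm\jvec}$, these derivatives of $w_\bfj^\bfk$ appear in the forcing and are the \emph{sole} source of the entries $\eps^{-(q-1)\alpha}$ in the $l=2$ row of the diagonal table --- a row where your proposed mechanism predicts no loss at all, since the off-diagonal coefficients still vanish identically at $l=2$. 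The same $w$-derivative losses then feed forward through the recursion and account for the mixed $\alpha$-powers such as $\eps^{-(l-4)(1-\alpha)-(q+l-3)\alpha}$ in the higher rows. Without tracking this second loss mechanism you cannot reproduce the tables.

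A secondary point: in the paper the variation-of-constants estimate for the diagonal part is combined with an integration by parts, so that the bound involves $|\dot z_{\bfj,l-1}^{\pm\jvec}|$ rather than $|\ddot z_{\bfj,l-1}^{\pm\jvec}|$. This saves one derivative of the previous level and keeps the recursion consistent with the table entries once the $w$-derivative losses are in play.
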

\begin{center}
\begin{tabular}{c | c c c c }
\toprule
       &  $q=0$ & $q=1$ & $q=2$ & $q>2$ \\
\midrule
$l=1$ & $\eps^0$   & $\eps^0$ & $\eps^0$ & $\eps^0$    \\
$l=2$ &  $\eps^0$ & $\eps^0$ &  $\eps^{-\alpha}$   & $\eps^{-(q-1)\alpha}$ \\
$l=3$ &  $\eps^{-(1-\alpha)}$ & $\eps^{-(1-\alpha)}$ &  $\eps^{-(1-\alpha)}$   & $\eps^{-(1-\alpha)}+\eps^{-q\alpha}$ \\
$l\ge 4$ &  $\eps^{-(l-2) (1-\alpha )}$ & $\eps^{-(l-2)(1-\alpha)}$ &  $\eps^{-(l-2)(1-\alpha)}$   & $\eps^{-(l-2)(1-\alpha )}\hfill$ \\
 & & & & $+ \eps^{-(l-4)(1-\alpha)-(q+l-3)\alpha }$ \\
\bottomrule
\end{tabular}
\vskip 1mm
{Bounds for diagonal coefficient functions $z_{\bfj,l}^{\pm\jvec}$.}
\end{center}
\smallskip
\begin{center}
\begin{tabular}{c | c c c c }
\toprule
       &  $q=0$ & $q=1$ & $q=2$ & $q>2$ \\
\midrule
$l=1$ & $0$   & $0$ & $0$ & $0$    \\
$l=2$ & $0$   & $0$ & $0$ & $0$    \\
$l=3$ &  $\eps^{-(1-\alpha)}$ & $\eps^{-(1-\alpha)}$ &  $\eps^{-(1-\alpha)}$   & $\eps^{-(1-\alpha)}$ \\
$l\ge 4$ &  $\eps^{-(l-2) (1-\alpha )}$ & $\eps^{-(l-2)(1-\alpha)}$ &  $\eps^{-(l-2)(1-\alpha)}$   & $\eps^{-(l-2)(1-\alpha )}\hfill$ \\
 & & & & $ + \eps^{-(l-3)(1-\alpha ) - (q+l-5)\alpha }$\\
\bottomrule
\end{tabular}
\vskip 1mm
{Bounds for off-diagonal coefficient functions $z_{\bfj,l}^\bfk$ with $\bfk\ne\pm\jvec$.}
\end{center}

\begin{proof}
 We work with \eqref{odezjkt-kl} for the off-diagonal coefficients $z_{\bfj,l}^\bfk$ for $\bfk\in\calK_\bfj$ with $\bfk\ne\pm\jvec$, with \eqref{odezjkt-jl} for the diagonal coefficients $z_{\bfj,l}^{\pm\jvec}$, and with \eqref{initialbjl-3}--\eqref{initialbjl2-3} for the initial values. Factorizing 
 $$
 (\bfk\cdot\bfomega)^2-\Omega_\bfj^2= (|\bfk\cdot\bfomega|+\Omega_\bfj)(|\bfk\cdot\bfomega|-\Omega_\bfj)
 $$
 and using that $\bigl| |\bfk\cdot\bfomega|-\Omega_\bfj \bigr| \ge \eps^{1-\alpha}$ for
 $\bfk\in\calK_\bfj$, we obtain for $\bfk\ne\pm\jvec$
 \begin{equation}\label{est-zjlk}
 (|\bfk\cdot\bfomega|+\Omega_\bfj)\, |z_{\bfj,l}^\bfk| \le C\eps^{-(1-\alpha)} \Bigl(
 |\bfk\cdot\bfomega|\cdot |z_{\bfj,l-1}^\bfk| + |\bfk\cdot\bfomega|\cdot |\dot z_{\bfj,l-1}^\bfk| +
 |\ddot z_{\bfj,l-2}^\bfk| + |g_{\bfj,l}^\bfk(\bfZ)|\Bigr)
 \end{equation}
 and similar bounds for the derivatives of $z_{\bfj,l}^\bfk$, where the functions on the right-hand side are replaced by their corresponding derivatives. 
 
 Solving the linear systems \eqref{initialbjl-3}--\eqref{initialbjl2-3} for $z_{\bfj,l}^{\jvec}(0)$ yields
\begin{equation}\label{est-zjlj-0}
  \Omega_\bfj z_{\bfj,l}^{\pm\jvec}(0) = \frac1{2\iu c(0)}\Bigl(b_{\bfj,l}^{\pm\jvec} -\! \sum_{\bfk\ne\pm\jvec} \iu 
 ((\bfk\cdot\bfomega)\pm\Omega_\bfj)\,c(0)\, z_{\bfj,l}^{\bfk}(0) + \!
 \sum_{\bfk\in\calK_j} \dot z_{\bfj,l-1}^\bfk(0) \Bigr),
 \end{equation}
 where $b_{\bfj,l}^{\pm\jvec} = (\d u_\bfj/\d t(0) \pm \iu \Omega_\bfj u_\bfj(0))/\eps$ for $l=1$, and $b_{\bfj,l}^{\pm\jvec} = 0$ else.
 We have 
 $$
 \sum_{\bfj\in\Z^{*,3}} |b_{\bfj,l}^{\pm\jvec}|^2 \le C
 $$
 by the assumption \eqref{init} on the initial values.
 
 The linear differential equation \eqref{odezjkt-jl} with $l-1$ replaced by $l$ becomes
 \begin{equation}
\label{ode-diag}
\pm \iu \Omega_\bfj  \bigl( 2c \dot z_{\bfj,l}^{\pm\jvec}  +\dot c z_{\bfj,l}^{\pm\jvec} \bigr)
 =   -\ddot z_{\bfj,l-1}^{\pm\jvec} +\sum_{ \bfk\,:\,|(\bfk\cdot\bfomega)\mp\Omega_\bfj | < \eps^{1-\alpha} } 
w_\bfj^\bfk g_{\bfj,l+1}^\bfk (\bfZ ), 
\end{equation} 
Using the variation-of-constants formula and a partial integration yields, for $0\le\tau\le 1$,
 \begin{align}
 \Omega_\bfj \big| z_{\bfj,l}^{\pm\jvec}(\tau)\big| &\le  C_1  \Omega_\bfj \big| z_{\bfj,l}^{\pm\jvec}(0)\big| \label{est-zjlj}
 \\ 
 &\ + 
 C_2 \max_{0\le \sigma\le \tau} \Bigl(
 \big| \dot z_{\bfj,l-1}^{\pm\jvec}(\sigma)\big| +
 \sum_{ \bfk\,:\,|(\bfk\cdot\bfomega)\mp\Omega_\bfj | < \eps^{1-\alpha} } 
\bigl|w_\bfj^\bfk(\sigma) \,g_{\bfj,l+1}^\bfk (\bfZ(\sigma) )\bigr| 
\Bigr).\nonumber
\end{align}
and similar bounds for the derivatives of $z_{\bfj,l}^{\pm\jvec}$. Note that the $q$-th derivative of $w_\bfj^\bfk(\tau)$ is bounded by $\bigo(\eps^{-q\alpha})$. 
 
With these tools we can estimate the coefficient functions $z_{\bfj,l}^\bfk$ and their derivatives for one $l$ after the other. For $l\le 0$, all $z_{\bfj,l}^\bfk$ are zero by definition.

 $l=1$: The off-diagonal coefficients are zero, because $g_{\bfj,l}^\bfk(\bfZ)\equiv 0$ for $l=1$ (and also for $l=2$). 
 By \eqref{est-zjlj-0} we obtain $|\bfz_1(\tau)|_\text{diag} \le C$ for $\tau=0$, and by \eqref{est-zjlj} for all $0\le\tau\le 1$. Using \eqref{ode-diag} we obtain the same bound for any finite number of derivatives of $\bfz_1$.
 
 $l=2$: The off-diagonal coefficients are still zero. Using the bound for $\dot\bfz_1$ in \eqref{est-zjlj-0}, we obtain $|\bfz_2(\tau)|_\text{diag} \le C$ for $\tau=0$, and by \eqref{est-zjlj} for all $0\le\tau\le 1$. Using \eqref{ode-diag}, which now contains non-vanishing $g_{\bfj,3}^\bfk$ with factors $w_\bfj^\bfk$, we find that the $q$-th derivative of $\bfz_2$ contains the $(q-1)$-th  derivative of $w_\bfj^\bfk$, which is $\bigo(\eps^{-(q-1)\alpha})$. Using Lemma~\ref{lem:boundg-3}, we thus
 obtain  $|\bfz_2^{(q)}(\tau)|_\text{diag} \le C\eps^{-(q-1)\alpha}$.
 
$l=3$: By \eqref{est-zjlk}, by the bound for $\bfz_1$ and its derivatives and by Lemma~\ref{lem:boundg-3} we obtain that $\bfz_3$ and its derivatives satisfy $|\bfz_3^{(q)}|_{\text{off-diag}}=\bigo(\eps^{-(1-\alpha)})$ for all $q\ge 0$.
By \eqref{est-zjlj-0}, the initial value for the diagonal part of $\bfz_3$ is bounded by
$|\bfz_3(\tau)|_{\text{diag}}=\bigo(\eps^{-(1-\alpha)})$ at $\tau=0$, and \eqref{est-zjlj} then gives the same bound for all $\tau\le 1$. Formula \eqref{ode-diag} and its differentiated versions then yield the bound $|\bfz_3^{(q)}(\tau)|_{\text{diag}}=\bigo(\eps^{-(1-\alpha)}+\eps^{-q\alpha})$ for $q\ge 1$.
 
$l\ge 4$: The same arguments as before yield the bounds of the lemma. 
\qed
\end{proof}

\subsection{Bounds for the defect and the remainder}

In the following we choose $N\ge 4$ arbitrarily in \eqref{ansatz-3} and $\alpha=1/N$ in \eqref{mfe-uj-3}--\eqref{Kj}.
Lemma~\ref{lem:boundsz-3} then shows that $\eps^{N+2} z_{\bfj,N+1}^\bfk$ and $\eps^{N+2}\dot z_{\bfj,N+1}^\bfk$ for $\bfk\ne\pm\jvec$ are both of magnitude $\bigo(\eps^{4-\alpha})$. As we will see in a moment, these terms are the dominating terms in the defect.
For the diagonal entries $z_{\bfj,l}^{\pm \jvec}(\tau)$ only the initial value is constructed
   for $l=N+1$ and the function is taken to be constant in time, because 
of the shifted index $l-1$ in
\eqref{odezjkt-jl}.
The defect, when $ z_\bfj^\bfk (\tau )$ is inserted into (\ref{odezjkt}), is given for $\bfk\ne\pm\jvec$ by
\begin{align}
d_\bfj^\bfk &= \eps^2 \ddot  z_\bfj^\bfk + 2\iu (\bfk\cdot\bfomega) \eps c \dot z_\bfj^\bfk +
\bigl( \iu (\bfk\cdot\bfomega) \eps \dot c - (\bfk\cdot\bfomega)^2 c^2 \bigr)z_\bfj^\bfk  +
\Omega_\bfj^2 c^2 z_\bfj^\bfk    \label{defect-jk}\\
&  + a 
\sum_{\bfj_1+\bfj_2+\bfj_3=\bfj}  \sum_{\bfk_1+\bfk_2+\bfk_3=\bfk} z_{\bfj_1}^{\bfk_1}z_{\bfj_2}^{\bfk_2}z_{\bfj_3}^{\bfk_3}  \nonumber
\end{align}
and for $\pm\jvec$ by
\begin{eqnarray}
d_\bfj^{\pm\jvec}&=&\eps^2 \ddot z_\bfj^{\pm\jvec} \pm \iu \Omega_\bfj \eps \bigl( 2c \dot z_\bfj^{\pm\jvec}  +\dot c z_\bfj^{\pm\jvec} \bigr)
\label{defect-jj}\\
&&  +\ a  \sum_{ \bfk\,:\,|(\bfk\cdot\bfomega)\mp\Omega_\bfj | < \eps^{1-\alpha} }
w_\bfj^{\bfk}
\sum_{\bfj_1+\bfj_2+\bfj_3=\bfj} 
\sum_{\bfk_1+\bfk_2+\bfk_3=\bfk} \,
 z_{\bfj_1}^{\bfk_1}z_{\bfj_2}^{\bfk_2}z_{\bfj_3}^{\bfk_3} . \nonumber
\end{eqnarray}
By construction of the coefficient functions $z_{\bfj,l}^\bfk $, the coefficients of
$\eps^l$ vanish for $l \le N+1$. All that remains is, for $\bfk\ne\pm\jvec$,
\begin{align*} 
d_\bfj^\bfk = \eps^{N+2} \Bigl(&\eps\ddot z_{\bfj,N+1}^\bfk +  \ddot z_{\bfj,N}^\bfk 
+2\iu (\bfk\cdot\bfomega) c \dot z_{\bfj,N+1}^\bfk\\ &+\iu (\bfk\cdot\bfomega) \dot c  z_{\bfj,N+1}^\bfk
+  a \!\!\sum_{l=N+2}^{3N+3} \!\!\eps^{l-N-2} g_{\bfj,l}^\bfk (\bfZ ) \Bigr)
\end{align*} 
and for $\pm\jvec$,
$$ 
d_\bfj^{\pm\jvec} = \eps^{N+2} \Bigl(  \ddot z_{\bfj,N}^{\pm\jvec}
 \pm \iu \Omega_\bfj \dot c  z_{\bfj,N+1}^{\pm\jvec}
+  a \!\!\sum_{l=N+2}^{3N+3} \!\!\!\!\eps^{l-N-2} \sum_{ \bfk\,:\,|(\bfk\cdot\bfomega)\mp\Omega_\bfj | < \eps^{1-\alpha} } \!\!\!\!
w_\bfj^{\bfk} g_{\bfj,l}^\bfk (\bfZ ) \Bigr)
$$ 
with $g_{\bfj,l}^\bfk (\bfZ ) $ defined in (\ref{gjlk-3}).

\begin{lemma}\label{lem:defect-3}
Consider the approximation \eqref{ansatz-3} with arbitrary $N\ge 4$ and  \eqref{mfe-uj-3}--\eqref{Kj}
with $\alpha=1/N$.
Under the assumptions of Theorem~\ref{thm:mfe-3}, the defect is bounded, for $0\le \tau \le 1$, by
\[
\Bigl(\, \sum_{\bfj\in\Z^{*,3}} \Bigl( \sum_{\bfk\in\calK_j} \big| d_{\bfj}^\bfk (\tau  ) \big| \Bigr)^2 \Bigr)^{1/2}
\le C \eps^{4-1/N} ,
\]
where $C$ is independent of $\eps$ and $0\le\tau\le 1$, but depends on $N$.
\end{lemma}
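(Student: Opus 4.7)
The plan is to apply the triangle inequality to the two explicit formulas for the defect displayed just above the statement—one for $\bfk\ne\pm\jvec$ and one for $\bfk=\pm\jvec$—and estimate each of the resulting terms separately in the sum-norm on the left-hand side, using the detailed bounds of Lemma~\ref{lem:boundsz-3} for the coefficient functions $\bfz_l$ and their derivatives, together with Lemma~\ref{lem:boundg-3} for the trilinear nonlinearity $g_{\bfj,l}^\bfk(\bfZ)$ defined in \eqref{gjlk-3}.

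First I would handle the linear terms. The candidate dominant contributions are $\eps^{N+2}\cdot 2\iu(\bfk\cdot\bfomega)c\,\dot z_{\bfj,N+1}^\bfk$ and $\eps^{N+2}\iu(\bfk\cdot\bfomega)\dot c\,z_{\bfj,N+1}^\bfk$ in the off-diagonal defect, and $\eps^{N+2}\iu\Omega_\bfj\dot c\,z_{\bfj,N+1}^{\pm\jvec}$ in the diagonal defect. Since the weight $|\bfk\cdot\bfomega|+\Omega_\bfj$ is built into the norm \eqref{norm-3}, these terms are controlled by $\eps^{N+2}\trn\bfz_{N+1}\trn$ or $\eps^{N+2}\trn\dot\bfz_{N+1}\trn$. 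By the row $l=N+1$, $q\le 1$ of the tables in Lemma~\ref{lem:boundsz-3} (and the choice $\alpha=1/N$), this yields exactly $\eps^{N+2}\cdot\eps^{-(N-1)(1-1/N)}=\eps^{4-1/N}$, precisely the advertised bound. The remaining linear terms $\eps^{N+3}\ddot z_{\bfj,N+1}^\bfk$, $\eps^{N+2}\ddot z_{\bfj,N}^\bfk$ and $\eps^{N+2}\ddot z_{\bfj,N}^{\pm\jvec}$ are treated via the $q=2$ rows of the same tables and are found to contribute only $\bigo(\eps^{5-1/N})$ or $\bigo(\eps^{5-2/N})$, hence strictly less.

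Next I would bound the nonlinear contributions $\eps^{N+2}\sum_{l=N+2}^{3N+3}\eps^{l-N-2}g_{\bfj,l}^\bfk(\bfZ) = \sum_{l=N+2}^{3N+3}\eps^{l}g_{\bfj,l}^\bfk(\bfZ)$. For each $l$, I would expand $g_{\bfj,l}^\bfk(\bfZ)$ as in \eqref{gjlk-3} into a finite sum over triples $(l_1,l_2,l_3)$ with $l_1+l_2+l_3=l$, apply Lemma~\ref{lem:boundg-3} factor by factor, and insert the component bounds $\eps^{l_i}\trn\bfz_{l_i}\trn = \bigo(\eps^{2+(l_i-2)_+\alpha})$ that are extracted directly from Lemma~\ref{lem:boundsz-3}. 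Adding the exponents and using $l\ge N+2$ shows that every such cubic contribution is at most $\bigo(\eps^{5-1/N})$, hence dominated by the linear piece. For the diagonal defect the extra inner sum over $\bfk$ with $|(\bfk\cdot\bfomega)\mp\Omega_\bfj|<\eps^{1-\alpha}$ is finite and the modulators satisfy $|w_\bfj^\bfk|=1$, so the same trilinear estimate applies.

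The hard part is not any single estimate but rather the careful bookkeeping across the four rows and four columns of the two tables in Lemma~\ref{lem:boundsz-3} and across all admissible triples $(l_1,l_2,l_3)$; the observation that the $\eps^{4-1/N}$ barrier is saturated exactly at $l=N+1$, $q=1$ (for $\alpha=1/N$) is what explains the particular value of $\alpha$ and why the defect size cannot be further reduced by increasing $N$ within this construction.
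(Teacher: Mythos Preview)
Your proposal is correct and follows exactly the paper's approach, which is literally a one-line proof: apply the explicit defect formulas together with Lemmas~\ref{lem:boundsz-3} and~\ref{lem:boundg-3}. One minor slip: the formula $\eps^{l_i}\trn\bfz_{l_i}\trn=\bigo(\eps^{2+(l_i-2)_+\alpha})$ fails for $l_i=1$ (where the bound is only $\bigo(\eps)$, not $\bigo(\eps^{2})$), but since $l_1+l_2+l_3=l\ge N+2$ the worst cubic contribution is still $\bigo(\eps^{5-2/N})$, dominated by $\eps^{4-1/N}$, so your conclusion stands.
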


\begin{proof}
The bound is obtained by using the above formulas for the defect and the
bounds of Lemma~\ref{lem:boundsz-3}, and Lemma~\ref{lem:boundg-3} for bounding the nonlinearity.
\qed
\end{proof}

We remark that the choice $N=4$ and $\alpha=1/2$ yields a smaller bound $\bigo(\eps^{9/2})$ for the defect. Our interest here is, however, to obtain the stated bound for arbitrarily small $\alpha>0$.

Equations \eqref{init-u-3}--\eqref{init-udot-3} and Lemma~\ref{lem:boundsz-3} also yield that the error in the initial values is bounded by $\widetilde u(\cdot,0)-u(\cdot,0)=0$ and $\| \partial_t{\widetilde u}(\cdot,0)-\partial_t u(\cdot,0)\|_{L^2} = \bigo(\eps^{4-\alpha})$. By the same argument as in Section~\ref{subsec:remainder}, it follows that the remainder term $(r,\partial_t r)$ of the MFE is bounded in $H^1_0(Q)\times L^2(Q)$ by $C(1+t)\eps^{4-\alpha}$ for $t\le\eps^{-1}$. This completes the proof of Theorem~\ref{thm:mfe-3}.

\section{Adiabatic invariant}\label{sect:adiabatic-3}

We show that the almost-invariant for the coefficients of the modulated Fourier expansion
extends from the one- to the three-dimensional case, albeit with a larger error in the near-conservation property.
Throughout this section we work with the truncated series (\ref{ansatz-3}) with arbitrary $N\ge 4$ in the  MFE \eqref{mfe-uj-3}--\eqref{Kj}
with $\alpha=1/N$.

\subsection{An almost-invariant of the MFE}

For $\bfj\in\Z^{*,3}$ and $\bfk\in\calK_\bfj$ we introduce the functions
\[
y_\bfj^\bfk (\tau ) = z_\bfj^\bfk (\tau ) \, \e^{\iu(\bfk\cdot\bfomega) \phi (\tau )/\eps} .
\]
In terms of $y_\bfj^\bfk$, for $\bfk\ne\pm\jvec$
 (\ref{defect-jk}) can be rewritten as
\begin{equation}\label{odeyjk-jk}
\eps^2 \ddot y_\bfj^\bfk(\tau) +  c(\tau)^2 \Omega_\bfj^2 y_\bfj^\bfk(\tau) + \nabla_{-\bfj}^{-\bfk} \calU (\bfy)(\tau) = d_\bfj^\bfk(\tau) \, \e^{\iu (\bfk\cdot\bfomega) \phi(\tau) /\eps}
\end{equation}
where
\[
\calU (\bfy ) = \frac {a}4  \,\sum_{\bfj_1+\ldots +\bfj_4=0}\,\sum_{\bfk_1+\ldots +\bfk_4=0}
y_{\bfj_1}^{\bfk_1}y_{\bfj_2}^{\bfk_2}y_{\bfj_3}^{\bfk_3}y_{\bfj_4}^{\bfk_4} ,
\]
and $\nabla_{-\bfj}^{-\bfk}$ denotes differentiation with respect to $y_{-\bfj}^{-\bfk}$. The convergence
of the infinite series in the definition of $\calU (\bfy )$ follows from  Lemma~\ref{lem:boundg-3} provided that $\trn \bfy \trn $ is bounded.
Equation (\ref{defect-jj}) can be written
\begin{equation}\label{odeyjk-jj}
\eps^2 \ddot y_\bfj^{\pm\jvec}(\tau) +  c(\tau)^2 \Omega_\bfj^2 y_\bfj^{\pm\jvec}(\tau) + \nabla_{-\bfj}^{\mp\jvec} \calU (\bfy)(\tau) = (d_\bfj^{\pm\jvec}(\tau) +e_\bfj^{\pm\jvec}(\tau))\, \e^{\pm\iu \Omega_\bfj \phi(\tau) /\eps},
\end{equation}
where
$$
e_\bfj^{\pm\jvec} =  -a  \sum_{ \bfk\ne\pm\jvec\,:\,|(\bfk\cdot\bfomega)\mp\Omega_\bfj | < \eps^{1-\alpha} }
\sum_{\bfj_1+\bfj_2+\bfj_3=\bfj} 
\sum_{\bfk_1+\bfk_2+\bfk_3=\bfk} \,
 y_{\bfj_1}^{\bfk_1}y_{\bfj_2}^{\bfk_2}y_{\bfj_3}^{\bfk_3}.
$$
The invariance property
\[
\calU \bigl( (\e^{-\iu (\bfk\cdot\bfomega) \theta } y_\bfj^\bfk )_{\bfj\in\Z^{*,3},\bfk\in\calK_\bfj} \bigr) =
\calU \bigl( (y_\bfj^\bfk )_{\bfj\in\Z^{*,3},\bfk\in\calK_\bfj} \bigr)  , \qquad \theta\in\real,
\] 
yields, like in Section~\ref{sect:adiabatic},
\begin{equation}\label{U-term-vanish}
\sum_{\bfj\in\Z^{*,3}}\sum_{\bfk\in\calK_\bfj} \iu (\bfk\cdot\bfomega) \,y_{-\bfj}^{-\bfk} \,\nabla_{-\bfj}^{-\bfk} \calU (\bfy ) =  0 .
\end{equation}
Moreover, the sum
$\sum_{\bfj\in\Z^{*,3}}\sum_{\bfk\in\calK_\bfj} \iu (\bfk\cdot\bfomega) y_{-\bfj}^{-\bfk}  \Omega_\bfj^2 c^2 y_\bfj^\bfk $ vanishes, because the term for $(\bfj,\bfk)$
cancels with that for $(-\bfj,-\bfk)$.
We have the following bounds for the terms on the right-hand sides of \eqref{odeyjk-jk} and \eqref{odeyjk-jj}.

\begin{lemma}\label{lem:d-bound}
We have, for  $0\le \tau\le 1$,
$$
\Bigl|\sum_{\bfj\in\Z^{*,3}} \sum_{\bfk\in\calK_\bfj} (\bfk\cdot\bfomega)\, y_{-\bfj}^{-\bfk} (\tau)\, d_\bfj^\bfk(\tau) \Bigr| \le
C\eps^{5-1/N},
$$
where $C$ is independent of $\eps$ and $0\le\tau\le 1$, but depends on $N$.
\end{lemma}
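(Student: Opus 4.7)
The plan is a direct application of the Cauchy--Schwarz inequality that exactly matches the weight $|\bfk\cdot\bfomega|+\Omega_\bfj$ already built into the norm \eqref{norm-3} with the factor $(\bfk\cdot\bfomega)$ appearing in the sum. The two ingredients are a size bound $\trn \bfy(\tau) \trn = \bigo(\eps)$ coming from Theorem~\ref{thm:mfe-3}, and the defect bound of Lemma~\ref{lem:defect-3}. No new analytic estimate is required; the delicate work has already been done in constructing the modulation functions and in choosing \eqref{norm-3} so that it absorbs precisely the $\ell^1$-in-$\bfk$ pairings that arise here.

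First I would observe that $|y_{-\bfj}^{-\bfk}(\tau)| = |z_{-\bfj}^{-\bfk}(\tau)| = |z_\bfj^\bfk(\tau)|$, because the phase factor has unit modulus and by the symmetry $z_\bfj^{-\bfk}=\overline{z_\bfj^\bfk}=-\overline{z_{-\bfj}^{-\bfk}}$ of Theorem~\ref{thm:mfe-3}. Together with $\Omega_{-\bfj}=\Omega_\bfj$, relabelling $\bfj\mapsto-\bfj$ and $\bfk\mapsto-\bfk$ in \eqref{norm-3} shows that the sequence $(y_{-\bfj}^{-\bfk})$ has the same $\trn\cdot\trn$ norm as $\bfy$. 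The diagonal and off-diagonal bounds of Theorem~\ref{thm:mfe-3} then combine to give $\trn \bfy(\tau)\trn \le C\eps + C\eps^{2+1/N} \le C'\eps$ uniformly for $0\le\tau\le 1$.

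For the pairing itself, I would first apply, for each fixed $\bfj\in\Z^{*,3}$, the trivial bound
$$
\sum_{\bfk\in\calK_\bfj} |\bfk\cdot\bfomega|\,|y_{-\bfj}^{-\bfk}|\,|d_\bfj^\bfk| \le \Bigl(\sum_{\bfk\in\calK_\bfj}(|\bfk\cdot\bfomega|+\Omega_\bfj)\,|y_{-\bfj}^{-\bfk}|\Bigr)\Bigl(\sum_{\bfk\in\calK_\bfj}|d_\bfj^\bfk|\Bigr),
$$
and then Cauchy--Schwarz in $\bfj$ to obtain
$$
\Bigl|\sum_{\bfj,\bfk} (\bfk\cdot\bfomega)\,y_{-\bfj}^{-\bfk}\,d_\bfj^\bfk\Bigr| \le \trn \bfy \trn \cdot \Bigl(\sum_{\bfj\in\Z^{*,3}}\Bigl(\sum_{\bfk\in\calK_\bfj} |d_\bfj^\bfk|\Bigr)^{2}\Bigr)^{1/2}.
$$
Plugging in $\trn \bfy(\tau)\trn = \bigo(\eps)$ and the defect bound $\bigo(\eps^{4-1/N})$ from Lemma~\ref{lem:defect-3} yields the announced estimate $\bigo(\eps^{5-1/N})$. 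Since the exponent $5-1/N$ is exactly $\eps$ times the defect exponent, there is no slack; this is the reason the threshold $\eps^{1-\alpha}$ in the definition of $\calK_\bfj$ had to be chosen with $\alpha=1/N$. I do not expect a genuine obstacle here beyond carefully tracking the two index-relabellings.
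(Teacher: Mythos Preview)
Your proof is correct and matches the paper's own argument, which simply says the bound follows from the Cauchy--Schwarz inequality together with Lemmas~\ref{lem:boundsz-3} and~\ref{lem:defect-3}. You have spelled out the details (the symmetry $|y_{-\bfj}^{-\bfk}|=|z_\bfj^\bfk|$, the $\ell^1$-in-$\bfk$ product bound, and the Cauchy--Schwarz step in $\bfj$) that the paper leaves implicit; citing Theorem~\ref{thm:mfe-3} in place of Lemma~\ref{lem:boundsz-3} is immaterial since the former's bounds on $\bfz$ come directly from the latter.
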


\begin{proof} This bound follows immediately  with the Cauchy--Schwarz inequality and using the bounds of Lemmas~\ref{lem:boundsz-3} and~\ref{lem:defect-3}. \qed
\end{proof}

\begin{lemma}\label{lem:e-bound} We have, for $0\le \tau\le 1$,
$$
\Bigl|\sum_{s\in\{-1,1\}}\sum_{\bfj\in\Z^{*,3}} s\Omega_\bfj\, y_{-\bfj}^{-s\jvec}(\tau)\,e_\bfj^{s\jvec}(\tau) \Bigr| \le C\eps^{5-1/N},
$$
where $C$ is independent of $\eps$ and $0\le\tau\le 1$, but depends on $N$.
\end{lemma}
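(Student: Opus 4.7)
The plan is to recognize $S :=\sum_s\sum_\bfj s\Omega_\bfj\, y_{-\bfj}^{-s\jvec}\,e_\bfj^{s\jvec}$ as a four‑fold sum over almost‑resonant quadruples that is invariant under permutations of the four index pairs; symmetrization then exposes the almost‑resonance defect $|\sum_i\bfk_i\cdot\bfomega|<\eps^{1-\alpha}$ and yields the desired gain of $\eps^{1-\alpha}=\eps^{1-1/N}$ over the naive quartic bound $\bigo(\eps^4)$.

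First I reindex by setting $\bfj_4=-\bfj$ and $\bfk_4=-s\langle\bfj_4\rangle$, noting that $\bfk_4\cdot\bfomega=-s\Omega_{\bfj_4}=-s\Omega_\bfj$. Inserting the definition of $e_\bfj^{s\jvec}$ and collecting signs rewrites
$$
S \;=\; a\sum_{\calC_{\mathrm d}}(\bfk_4\cdot\bfomega)\,y_{\bfj_1}^{\bfk_1}y_{\bfj_2}^{\bfk_2}y_{\bfj_3}^{\bfk_3}y_{\bfj_4}^{\bfk_4},
$$
where $\calC_{\mathrm d}$ consists of the 4-tuples $(\bfj_i,\bfk_i)_{i=1}^4$ with $\bfj_i\in\Z^{*,3}$, $\sum_i\bfj_i=0$, $\bfk_i\in\calK_{\bfj_i}$ and $\|\bfk_i\|\le N+1$, $\bfk_4\in\{\pm\langle\bfj_4\rangle\}$, $\sum_i\bfk_i\ne 0$, and $|\sum_i\bfk_i\cdot\bfomega|<\eps^{1-\alpha}$. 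Enlarging $\calC_{\mathrm d}$ by dropping the restriction on $\bfk_4$ gives a permutation‑symmetric set $\calC$, and I split $S=R_4-R_4^{\mathrm{ND}}$ where $R_4$ is the analogous sum over $\calC$ and $R_4^{\mathrm{ND}}$ collects the terms with $\bfk_4\notin\{\pm\langle\bfj_4\rangle\}$.

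The main term $R_4$ is controlled by a Noether‑like symmetrization: since both $\calC$ and the integrand $\prod_i y_{\bfj_i}^{\bfk_i}$ are invariant under permutations of the four index pairs, the value of $R_4$ is unchanged when $\bfk_4\cdot\bfomega$ is replaced by any $\bfk_i\cdot\bfomega$, and averaging produces
$$
4R_4 \;=\; a\sum_{\calC}\Bigl(\sum_{i=1}^4\bfk_i\cdot\bfomega\Bigr)\prod_{i=1}^4 y_{\bfj_i}^{\bfk_i},
$$
so the almost‑resonance bound gives $|R_4|\le C\eps^{1-\alpha}\sum_{\calC}\prod_i|y_{\bfj_i}^{\bfk_i}|$. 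Dropping the remaining constraints, the unsigned sum is the Fourier coefficient at $\bfj=0$ of $F(\bfx)^4$ with $F(\bfx)=\sum_{\bfj}\bigl(\sum_{\bfk}|z_{\bfj}^{\bfk}|\bigr)\e^{\iu\bfj\circ\bfx}$; by Parseval and the Sobolev embedding $H^1(Q)\hookrightarrow L^4(Q)$ valid for $d\le 3$ (exactly as in Lemma~\ref{lem:boundg-3}), this is at most $C\|F\|_{H^1}^4$, and Theorem~\ref{thm:mfe-3} provides $\|F\|_{H^1}\le C\trn\bfz\trn=\bigo(\eps)$. Hence $|R_4|\le C\eps^{5-1/N}$.

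The residual $R_4^{\mathrm{ND}}$ is handled directly using the smallness of off‑diagonal modulations: setting $G(\bfx)=\sum_{\bfj}\bigl(\sum_{\bfk\ne\pm\langle\bfj\rangle}|\bfk\cdot\bfomega|\,|z_{\bfj}^{\bfk}|\bigr)\e^{\iu\bfj\circ\bfx}$, H\"older and Sobolev give $|R_4^{\mathrm{ND}}|\le C\|G\|_{L^2}\|F\|_{L^6}^3$, while Lemma~\ref{lem:boundsz-3} yields $\|G\|_{L^2}\le |\bfz|_{\text{off-diag}}=\bigo(\eps^{2+1/N})$ and $\|F\|_{L^6}\le C\|F\|_{H^1}=\bigo(\eps)$, so $|R_4^{\mathrm{ND}}|=\bigo(\eps^{5+1/N})$, well inside the target bound. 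Combining, $|S|\le|R_4|+|R_4^{\mathrm{ND}}|\le C\eps^{5-1/N}$. The delicate point will be checking that $\calC$ is genuinely permutation‑symmetric—in particular that the cutoffs $\bfk_i\in\calK_{\bfj_i}$ and $\|\bfk_i\|\le N+1$ apply uniformly across all four indices—since this symmetry is exactly what converts the almost‑resonance defect into the decisive $\eps^{1-1/N}$ gain.
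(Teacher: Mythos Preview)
Your proof is correct and follows essentially the same approach as the paper: both rely on the permutation symmetrization that replaces the single factor $\bfk_4\cdot\bfomega$ (or $s_4\Omega_{\bfj_4}$) by the averaged sum $\tfrac14\sum_i\bfk_i\cdot\bfomega$, whose modulus is below $\eps^{1-\alpha}$ by the almost-resonance constraint, yielding the decisive $\eps^{1-1/N}$ gain. The only difference is organizational---the paper first discards the off-diagonal contributions in $\bfk_1,\bfk_2,\bfk_3$ (these are $\bigo(\eps^{5+\alpha})$ by Lemma~\ref{lem:boundsz-3}) and then symmetrizes the purely diagonal quartic, whereas you symmetrize over the full set $\calC$ and afterwards subtract the correction $R_4^{\mathrm{ND}}$ coming from off-diagonal $\bfk_4$; both orderings are equally valid.
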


\begin{proof} Consider first those terms in the sum defining $e_\bfj^{\pm\jvec}$ where one of the $\bfk_i$ ($i=1,2,3$) is different from $\pm\langle\bfj_i\rangle$. These terms yield a contribution of magnitude $\bigo(\eps^{5+\alpha})$ (with $\alpha=1/N$)
by the bounds of Lemma~\ref{lem:boundsz-3}. Hence it remains to bound
 $\sum_{s\in\{-1,1\}}\sum_{\bfj\in\Z^{*,3}} s\Omega_\bfj\,y_{-\bfj}^{-s\jvec}(\tau)\,\widehat e_\bfj^{s\jvec}(\tau)$, where
\begin{align*}
\widehat e_\bfj^{\;s\jvec} &=  -a  \sum_{\bfj_1+\bfj_2+\bfj_3=\bfj} 
 \sum_{ (s_1,s_2,s_3) \atop | s_1\Omega_{\bfj_1}+s_2\Omega_{\bfj_2}+s_3\Omega_{\bfj_3}-s\Omega_{\bfj}| < \eps^{1-\alpha} } 
y_{\bfj_1}^{s_1\langle\bfj_1\rangle}y_{\bfj_2}^{s_2\langle\bfj_2\rangle}y_{\bfj_3}^{s_3\langle\bfj_3\rangle},
\end{align*}
where the sum is over $s_i\in\{-1,1\}$ with the stated property.
We then have, on formally setting $\bfj_4=-\bfj$ and $s_4=-s$, and on using the symmetry of the expression in the second line,
\begin{align*}
&\sum_{s\in\{-1,1\}}\sum_{\bfj\in\Z^{*,3}} s\Omega_\bfj \, y_{-\bfj}^{-s\jvec}(\tau)\,\widehat e_\bfj^{s\jvec}(\tau) 
\\
&=
 a \sum_{\bfj_1+\bfj_2+\bfj_3+\bfj_4=\bfzero} 
\sum_{(s_1,s_2,s_3,s_4) \atop | s_1\Omega_{\bfj_1}+s_2\Omega_{\bfj_2}+s_3\Omega_{\bfj_3}+s_4\Omega_{\bfj_4}| < \eps^{1-\alpha} } \!\!\!\!\!\!\!
s_4\Omega_{\bfj_4}\,y_{\bfj_1}^{s_1\langle\bfj_1\rangle}y_{\bfj_2}^{s_2\langle\bfj_2\rangle}y_{\bfj_3}^{s_3\langle\bfj_3\rangle}y_{\bfj_4}^{s_4\langle\bfj_4\rangle}
\\
&=\frac{a}4\sum_{\bfj_1+\bfj_2+\bfj_3+\bfj_4=\bfzero} 
\sum_{(s_1,s_2,s_3,s_4) \atop | s_1\Omega_{\bfj_1}+s_2\Omega_{\bfj_2}+s_3\Omega_{\bfj_3}+s_4\Omega_{\bfj_4}| < \eps^{1-\alpha} } \!\!
\Bigl(\sum_{i=1}^4 s_i\Omega_{\bfj_i}\Bigr)\,\prod_{i=1}^4 y_{\bfj_i}^{s_i\langle\bfj_i\rangle}.
\end{align*}
Since $\bigl|\sum_{i=1}^4 s_i\Omega_{\bfj_i}\bigr|<\eps^{1-\alpha}$  and $|\bfy|_{\text{diag}}=\bigo(\eps)$, it follows with the Cauchy-Schwarz inequality and Lemma~\ref{lem:boundg-3} that this expression is $\bigo(\eps^{1-\alpha}\eps^4)$, which yields the result.
\qed
\end{proof}

\begin{theorem}\label{thm:calI-3}
Consider the expression
\[
\calI (\bfy  , \dot \bfy)= \eps \sum_{\bfj\in\Z^{*,3}} \sum_{\bfk\in\calK_\bfj}   \iu (\bfk\cdot\bfomega) y_{-\bfj}^{-\bfk} \dot y_\bfj^\bfk .
\]
In the situation of Theorem~\ref{thm:mfe-3},
the functions $y_\bfj^\bfk (\tau ) = z_\bfj^\bfk (\tau ) \, \e^{\iu (\bfk\cdot\bfomega) \phi (\tau )/\eps} $ 
then satisfy, for $0\le \eps t \le 1$,
\begin{equation}\label{thmest1-3}
\frac {\d}{\d t} \calI \bigl( \bfy (\eps t) , \dot \bfy (\eps t) \bigr) = \bigo (\eps^{5-1/N})
\end{equation}
and
\begin{equation}\label{thmest2-3}
\calI \bigl( \bfy (\eps t) , \dot \bfy (\eps t) \bigr)
= 2c(\eps t) \sum_{\bfj\in\Z^{*,3}} \Omega_\bfj^2 \big|z_\bfj^\jvec(\eps t) \big|^2 + \bigo (\eps^3 ) . 
\end{equation}
The constant symbolised by $\bigo (\cdot )$ depends on the truncation index $N$,
but it is independent of $0<\eps \le \eps^*$ (with $\eps^*$ sufficiently
small) and of $t$ as long as $0\le \eps t \le 1$.
\end{theorem}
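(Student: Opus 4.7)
The proof will follow the same two-step architecture as Theorem~\ref{thm:calI}, adapted to the richer setting with the modified modulation set $\calK_\bfj$ and the additional near-resonance terms $e_\bfj^{\pm\jvec}$. First I would compute the total time derivative of $\calI(\bfy(\eps t),\dot\bfy(\eps t))$ via the chain rule, producing a factor $\eps$ that combines with the leading $\eps$ in the definition of $\calI$ to give
$$
\frac{\d}{\d t}\calI\bigl(\bfy(\eps t),\dot\bfy(\eps t)\bigr)
= \eps^2 \sum_{\bfj\in\Z^{*,3}}\sum_{\bfk\in\calK_\bfj}\iu(\bfk\cdot\bfomega)\bigl(\dot y_{-\bfj}^{-\bfk}\dot y_\bfj^\bfk + y_{-\bfj}^{-\bfk}\ddot y_\bfj^\bfk\bigr).
$$
The first summand vanishes by the antisymmetry under $(\bfj,\bfk)\mapsto(-\bfj,-\bfk)$ (using $\Omega_{-\bfj}=\Omega_\bfj$), exactly as in the one-dimensional case.

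Next I would replace $\eps^2\ddot y_\bfj^\bfk$ using \eqref{odeyjk-jk} for $\bfk\ne\pm\jvec$ and \eqref{odeyjk-jj} for $\bfk=\pm\jvec$. The term $c^2\Omega_\bfj^2 y_\bfj^\bfk$ again cancels by the $(\bfj,\bfk)\leftrightarrow(-\bfj,-\bfk)$ symmetry, while the potential term $\nabla_{-\bfj}^{-\bfk}\calU(\bfy)$ is eliminated by the gauge identity \eqref{U-term-vanish}. What remains is the sum of the defect contribution $\sum_{\bfj,\bfk}\iu(\bfk\cdot\bfomega)y_{-\bfj}^{-\bfk}d_\bfj^\bfk\e^{\iu(\bfk\cdot\bfomega)\phi/\eps} = \sum\iu(\bfk\cdot\bfomega)z_{-\bfj}^{-\bfk}d_\bfj^\bfk$, controlled by Lemma~\ref{lem:d-bound}, and the near-resonance contribution $\sum_{\bfj,\pm}\iu(\pm\Omega_\bfj)y_{-\bfj}^{\mp\jvec}e_\bfj^{\pm\jvec}\e^{\pm\iu\Omega_\bfj\phi/\eps}$, controlled by Lemma~\ref{lem:e-bound}. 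Both are $\bigo(\eps^{5-1/N})$, which yields \eqref{thmest1-3}.

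For \eqref{thmest2-3} I would differentiate $y_\bfj^\bfk=z_\bfj^\bfk\e^{\iu(\bfk\cdot\bfomega)\phi/\eps}$ in~$t$ to obtain $\eps\dot y_\bfj^\bfk=(\eps\dot z_\bfj^\bfk+\iu(\bfk\cdot\bfomega)c\,z_\bfj^\bfk)\e^{\iu(\bfk\cdot\bfomega)\phi/\eps}$, and substitute into the definition of~$\calI$. This produces
$$
\calI(\bfy,\dot\bfy) = -\sum_{\bfj,\bfk}(\bfk\cdot\bfomega)^2 c\,z_{-\bfj}^{-\bfk}z_\bfj^\bfk + \eps\sum_{\bfj,\bfk}\iu(\bfk\cdot\bfomega)z_{-\bfj}^{-\bfk}\dot z_\bfj^\bfk,
$$
where $z_{-\bfj}^{-\bfk}z_\bfj^\bfk=-|z_\bfj^\bfk|^2$ by the reality relation $z_{-\bfj}^{-\bfk}=-\overline{z_\bfj^\bfk}$. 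Splitting the first sum at $\bfk=\pm\jvec$ and using $|z_\bfj^{\jvec}|=|z_\bfj^{-\jvec}|$ (from $z_\bfj^{-\bfk}=\overline{z_\bfj^\bfk}$) plus $(\pm\jvec\cdot\bfomega)^2=\Omega_\bfj^2$ extracts the main term $2c\sum_\bfj\Omega_\bfj^2|z_\bfj^\jvec|^2$. The remaining off-diagonal sum is controlled via Cauchy--Schwarz by $|\bfz|_{\text{off-diag}}^2=\bigo(\eps^{4+2\alpha})=o(\eps^3)$, and the $\eps$-correction is bounded by $\eps\cdot|\bfz|_\text{diag}\cdot|\dot\bfz|_\text{diag}=\bigo(\eps^3)$ using Lemma~\ref{lem:boundsz-3}.

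The main obstacle, and the genuinely new ingredient compared to the one-dimensional case, is handling the near-resonance contribution $e_\bfj^{\pm\jvec}$ that arises from the diagonal equation \eqref{odeyjk-jj}. Its treatment in Lemma~\ref{lem:e-bound} is exactly what forces the exponent $5-1/N$ rather than $5$, because the cutoff $\eps^{1-\alpha}=\eps^{1-1/N}$ in the definition of $\calK_\bfj$ is what limits the smallness of nearly-resonant combinations $s_1\Omega_{\bfj_1}+\dots+s_4\Omega_{\bfj_4}$. All other steps are structurally parallel to Section~\ref{sect:adiabatic}; the use of the symmetrisation trick in Lemma~\ref{lem:e-bound} (relabelling with $\bfj_4=-\bfj$, $s_4=-s$) to bring out the factor $\sum_{i}s_i\Omega_{\bfj_i}$ is the decisive step that recoups the missing power of $\eps$.
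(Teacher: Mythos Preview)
Your proposal is correct and follows precisely the same route as the paper's own (very brief) proof: differentiate $\calI$, cancel the $\dot y_{-\bfj}^{-\bfk}\dot y_\bfj^\bfk$ and $\Omega_\bfj^2 c^2$ terms by the $(\bfj,\bfk)\leftrightarrow(-\bfj,-\bfk)$ symmetry, eliminate the potential term via \eqref{U-term-vanish}, and invoke Lemmas~\ref{lem:d-bound} and~\ref{lem:e-bound} for the defect and near-resonance remainders; then prove \eqref{thmest2-3} exactly as in Theorem~\ref{thm:calI} with the bounds of Lemma~\ref{lem:boundsz-3}. Your explanation of why the exponent is $5-1/N$ rather than $5$ (the symmetrisation trick in Lemma~\ref{lem:e-bound} only recovers the near-resonance cutoff $\eps^{1-1/N}$) is also exactly the mechanism at work.
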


\begin{proof} The bound \eqref{thmest1-3} is obtained by
differentiation of $\calI \bigl( \bfy (\eps t) , \dot \bfy (\eps t) \bigr)$
with respect to $t$ and using \eqref{U-term-vanish} and Lemmas~\ref{lem:d-bound} and~\ref{lem:e-bound}.
The relation~\eqref{thmest2-3} is proved in the same way as the analogous relation in Theorem~\ref{thm:calI}, using the bounds of Lemma~\ref{lem:boundsz-3}.
\qed
\end{proof}

\subsection{Connection with the action of the wave equation}

We consider the harmonic energy divided by the wave speed along the MFE $\widetilde u(\bfx,t)$ of Theorem~\ref{thm:mfe-3},
$$ 
\begin{array}{rcl}
\widetilde I(t) &=& \displaystyle \frac 1{2c(\eps t)} \Bigl( \| \partial_t \widetilde u(\cdot,t) \|^2 + 
c(\eps t)^2 \|\nabla_\bfx \widetilde u(\cdot,t) \|^2 \Bigr) \\[4mm]
&=& \displaystyle  \frac 1{2c(\eps t)} \Bigl( \sum_{\bfj\in\Z^{*,3}} \bigl|\frac\d{\d t} \widetilde u_\bfj(t)\bigr|^2 +c(\eps t)^2 
\sum_{\bfj\in\Z^{*,3}} \Omega_\bfj^2 | \widetilde u_\bfj(t)|^2  \Bigr).
\end{array}
$$ 

The following result is proved in the same way as Lemma~\ref{lem:action}.

\begin{lemma}\label{lem:action-3}
Let $\widetilde u(\bfx,t)$ be the MFE of Theorem~\ref{thm:mfe-3}. Then,
\[
\widetilde I(t)
= 2c(\eps t) \sum_{\bfj\in\Z^{*,3}} \Omega_\bfj^2 \big|z_\bfj^\jvec(\eps t) \big|^2 + \bigo (\eps^3 ) . 
\]
\end{lemma}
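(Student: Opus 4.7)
The plan is to carry over the proof of Lemma~\ref{lem:action} to the three-dimensional setting, treating the sum over $\bfk\in\calK_\bfj$ instead of $\bfk\in\Z$ and replacing $|j|$ by $\Omega_\bfj$. Only the two estimates on diagonal and off-diagonal modulation coefficients provided by Theorem~\ref{thm:mfe-3} enter, so the argument is essentially a direct bookkeeping of error terms.

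First I would differentiate the representation $\widetilde u_\bfj(t)=\sum_{\bfk\in\calK_\bfj} z_\bfj^\bfk(\tau)\e^{\iu(\bfk\cdot\bfomega)\phi(\tau)/\eps}$ with respect to $t$ to get
\[
\frac{\d}{\d t}\widetilde u_\bfj(t)=\sum_{\bfk\in\calK_\bfj}\bigl(\eps\dot z_\bfj^\bfk(\tau)+\iu(\bfk\cdot\bfomega)c(\tau)z_\bfj^\bfk(\tau)\bigr)\e^{\iu(\bfk\cdot\bfomega)\phi(\tau)/\eps},
\]
and split the inner sum into the two diagonal contributions $\bfk=\pm\jvec$ (for which $\bfk\cdot\bfomega=\pm\Omega_\bfj$) and the off-diagonal remainder. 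The diagonal $\trn\cdot\trn$-seminorm is $\bigo(\eps)$ while the off-diagonal one is $\bigo(\eps^{2+\alpha})$ by Theorem~\ref{thm:mfe-3}; together with the pointwise bound $|\eps\dot z_\bfj^{\pm\jvec}|=\bigo(\eps^2)$ this handles all non-dominant pieces.

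Next I would square and sum in $\bfj$. By Cauchy--Schwarz, the cross terms between the diagonal and off-diagonal parts are bounded by $\bigo(\eps)\cdot\bigo(\eps^{2+\alpha})=\bigo(\eps^{3+\alpha})$, the pure off-diagonal squares are $\bigo(\eps^{4+2\alpha})$, and the $\eps\dot z$ pieces give $\bigo(\eps^3)$ when crossed with $\bigo(\eps)$. All of these are $\bigo(\eps^3)$, so that
\begin{align*}
\sum_{\bfj\in\Z^{*,3}}\Bigl|\tfrac{\d}{\d t}\widetilde u_\bfj(t)\Bigr|^2
&=c(\tau)^2\sum_{\bfj\in\Z^{*,3}}\Omega_\bfj^2\bigl|z_\bfj^\jvec(\tau)\e^{\iu\Omega_\bfj\phi/\eps}-z_\bfj^{-\jvec}(\tau)\e^{-\iu\Omega_\bfj\phi/\eps}\bigr|^2+\bigo(\eps^3),\\
\sum_{\bfj\in\Z^{*,3}}\Omega_\bfj^2|\widetilde u_\bfj(t)|^2
&=\sum_{\bfj\in\Z^{*,3}}\Omega_\bfj^2\bigl|z_\bfj^\jvec(\tau)\e^{\iu\Omega_\bfj\phi/\eps}+z_\bfj^{-\jvec}(\tau)\e^{-\iu\Omega_\bfj\phi/\eps}\bigr|^2+\bigo(\eps^3).
\end{align*}

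Finally, the symmetry $z_\bfj^{-\jvec}=\overline{z_\bfj^{\jvec}}$ lets me apply the identity $|a-\overline a|^2+|a+\overline a|^2=4|a|^2$ with $a=z_\bfj^\jvec\e^{\iu\Omega_\bfj\phi/\eps}$, so that adding the two displayed equations yields $4c(\tau)^2\sum_\bfj\Omega_\bfj^2|z_\bfj^\jvec|^2+\bigo(\eps^3)$; dividing by $2c(\tau)$ delivers the claim. I do not anticipate a serious obstacle: the proof is a direct transcription of the one-dimensional argument, with the only new ingredient being the off-diagonal estimate $\bigo(\eps^{2+\alpha})$ from Theorem~\ref{thm:mfe-3}, which is more than enough to make all cross terms $\bigo(\eps^3)$.
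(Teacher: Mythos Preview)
Your proposal is correct and follows essentially the same approach as the paper, which simply states that the lemma ``is proved in the same way as Lemma~\ref{lem:action}.'' You have spelled out that transcription explicitly, using the diagonal and off-diagonal bounds from Theorem~\ref{thm:mfe-3} (the 3D analogue of \eqref{estnondiag}) in place of the 1D estimates, and the same identity $|a-\overline a|^2+|a+\overline a|^2=4|a|^2$; the only minor imprecision is calling $|\eps\dot z_\bfj^{\pm\jvec}|=\bigo(\eps^2)$ a ``pointwise'' bound when it is really the weighted $\ell^2$ bound over $\bfj$, but that is exactly what you use in the Cauchy--Schwarz step, so the argument goes through.
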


\subsection{Transitions in the almost-invariant}
The following result is obtained by the same arguments as in the proof of Lemma~\ref{lem:trans}, using the construction of the MFE in Section~\ref{sec:mfe-t-3}.

\begin{lemma}\label{lem:trans-3}
 Under the conditions of Theorem~\ref{thm:mfe-3}, let $z_\bfj^\bfk(\tau)$ for $0\le\tau=\eps t\le 1$ be the coefficient functions of the MFE as in Theorem~\ref{thm:mfe-3} for initial data $(u(\cdot,0),\partial_t u(\cdot,0))$, and let $y_\bfj^\bfk(\tau)=z_\bfj^\bfk(\tau)\e^{\iu (\bfk\cdot\bfomega)\phi(\tau)/\eps} $ and $\bfy(\tau)=\bigl( y_\bfj^\bfk(\tau)\bigr)$. Let further
 $\widetilde\bfy(\tau)=\bigl( \widetilde y_\bfj^\bfk(\tau)\bigr)$ be the corresponding functions of the MFE for $1\le \tau\le 2$ to the initial data $(u(\cdot,\eps^{-1}),\partial_t u(\cdot,\eps^{-1}))$, constructed as in Theorem~\ref{thm:mfe-3}. Then,
 $$
 \big|\calI\bigl(\bfy(1),\dot\bfy(1)\bigr) - \calI\bigl(\widetilde\bfy(1),\dot{\widetilde\bfy}(1)\bigr)\big|\le C\eps^{4-1/N},
 $$
 where $C$ is independent of $\eps$.
\end{lemma}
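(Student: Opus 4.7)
The plan is to mirror the proof of Lemma~\ref{lem:trans} from the one-dimensional case, replacing the $\trn\cdot\trn$-norm of \eqref{norm} by that of \eqref{norm-3} and keeping track of the small-divisor factor $\eps^{-(1-1/N)}$ that enters the three-dimensional construction of Section~\ref{sect:construction-3}. Extended to $\tau\in[1,2]$ through the same recursion, the modulation functions $\bfz(\tau)$ of Theorem~\ref{thm:mfe-3} are (up to $\bigo(\eps^{N+2})$) the MFE starting at $\tau=1$ from the \emph{approximate} values $(\widetilde u(\cdot,\eps^{-1}),\partial_t\widetilde u(\cdot,\eps^{-1}))$, whereas $\widetilde\bfz(\tau)$ is the MFE starting from the \emph{exact} values $(u(\cdot,\eps^{-1}),\partial_t u(\cdot,\eps^{-1}))$. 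The discrepancy between these two choices of initial data at $t=\eps^{-1}$ is controlled by the remainder bound of Theorem~\ref{thm:mfe-3}, which yields
$$
\|u(\cdot,\eps^{-1})-\widetilde u(\cdot,\eps^{-1})\|_{H^1_0} + \|\partial_t u(\cdot,\eps^{-1})-\partial_t\widetilde u(\cdot,\eps^{-1})\|_{L_2} \le C\eps^{3-1/N}.
$$

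The next step is to transfer this estimate to a bound on $\trn\bfz(\tau)-\widetilde\bfz(\tau)\trn$ at $\tau=1$. I would subtract the relations \eqref{odezjkt-kl}--\eqref{odezjkt-jl} and the initial conditions \eqref{initialbjl-3}--\eqref{initialbjl2-3} written for $\bfz$ and $\widetilde\bfz$, obtaining a linear recursion in the differences $D_l := \bfz_l-\widetilde\bfz_l$ with the same coefficients as the original system. The forcing comes from the initial perturbation, which after division by $\eps$ in \eqref{initialbjl-3} enters the $l=1$ level at the scale $\eps^{2-1/N}$, and from trilinear differences $g_{\bfj,l}^\bfk(\bfZ)-g_{\bfj,l}^\bfk(\widetilde\bfZ)$, which I would expand telescopically and estimate via Lemma~\ref{lem:boundg-3} using the precomputed bounds of Lemma~\ref{lem:boundsz-3} for $\bfz$ and $\widetilde\bfz$. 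Running the induction of Lemma~\ref{lem:boundsz-3} on the differences $D_l$ layer by layer, the small-divisor factor $\eps^{-(1-1/N)}$ enters each off-diagonal inversion exactly as before, and the diagonal contributions are controlled through the variation-of-constants formula \eqref{est-zjlj} on the interval $\tau\in[1,2]$. Each layer thereby acquires the extra factor $\eps^{2-1/N}$ inherited from the initial perturbation, and after summing over $l$ with the prefactors $\eps^l$ of the ansatz \eqref{ansatz-3} the $l=1$ contribution $\eps\cdot\eps^{2-1/N}=\eps^{3-1/N}$ dominates (the $l\ge 2$ contributions are bounded by $\eps^{4+(l-3)/N}$ thanks to the compensation between $\eps^l$ and the small-divisor blow-up). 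This yields
$$
\trn\bfz(\tau)-\widetilde\bfz(\tau)\trn \le C\eps^{3-1/N}, \qquad 1\le\tau\le 2,
$$
together with analogous bounds for a fixed number of $\tau$-derivatives.

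Finally, as in the proof of Theorem~\ref{thm:calI-3}, after differentiating $y_\bfj^\bfk=z_\bfj^\bfk\e^{\iu(\bfk\cdot\bfomega)\phi/\eps}$ and pairing with $y_{-\bfj}^{-\bfk}$, the phases cancel and $\calI(\bfy,\dot\bfy)$ becomes a bilinear expression in $(\bfz,\eps\dot\bfz)$. Splitting
$$
\calI(\bfy)-\calI(\widetilde\bfy) = B(\bfz-\widetilde\bfz,\,\bfz,\,\dot\bfz) + B(\widetilde\bfz,\,\bfz-\widetilde\bfz,\,\dot{\widetilde\bfz}) + \eps\,\widetilde B(\bfz,\widetilde\bfz,\,\dot\bfz-\dot{\widetilde\bfz})
$$
and applying the Cauchy--Schwarz inequality in the $\trn\cdot\trn$-norm, combined with the bounds $\trn\bfz\trn,\trn\widetilde\bfz\trn\le C\eps$ from Lemma~\ref{lem:boundsz-3} and the newly obtained bound on the difference, one arrives at
$$
\big|\calI\bigl(\bfy(1),\dot\bfy(1)\bigr)-\calI\bigl(\widetilde\bfy(1),\dot{\widetilde\bfy}(1)\bigr)\big| \le C\,\eps\cdot\eps^{3-1/N} = C\eps^{4-1/N},
$$
which is the claim.

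The main obstacle is the second step: one must verify that at every level $l$ of the induction, the off-diagonal factor $\eps^{-(1-1/N)}$ and the cubic couplings in $g_{\bfj,l}^\bfk$ produce a difference-estimate that is precisely $\eps^{2-1/N}$ smaller than the corresponding bound in Lemma~\ref{lem:boundsz-3}, so that the $l=1$ contribution remains dominant in $\trn\bfz-\widetilde\bfz\trn$. This is the difference-version of Lemma~\ref{lem:boundsz-3} and is exactly the bookkeeping that the paper relegates to ``omitting the details'' in the one-dimensional case.
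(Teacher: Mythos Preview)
Your proposal is correct and follows the same route as the paper, which for Lemma~\ref{lem:trans-3} gives no independent proof but simply refers back to the argument of Lemma~\ref{lem:trans} with the construction of Section~\ref{sec:mfe-t-3} substituted. You have in fact supplied considerably more detail than the paper does: the explicit identification of the remainder size $\eps^{3-1/N}$ at $t=\eps^{-1}$, the layer-by-layer difference estimate with the small-divisor factor $\eps^{-(1-1/N)}$, and the bilinear splitting of $\calI$ are all implicit in the paper's one-line reference.

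One small imprecision: the parenthetical ``up to $\bigo(\eps^{N+2})$'' is the one-dimensional figure carried over verbatim. In three dimensions the restart error in \eqref{init-udot-3}, namely $\eps^{N+2}\sum_\bfk \dot z_{\bfj,N+1}^\bfk$, is of size $\bigo(\eps^{4-1/N})$ by Lemma~\ref{lem:boundsz-3} rather than $\bigo(\eps^{N+2})$. This does not affect your conclusion, since that term is already at the target order and is dominated by (or equal to) the main contribution $\eps\cdot\eps^{3-1/N}$ you extract from the $l=1$ layer.
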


\subsection{Long-time conservation of the adiabatic invariant}
\label{subsec:patch-3}
In the same way as in Section~\ref{subsec:patch} we obtain from Theorem~\ref{thm:calI-3} and Lemmas~\ref{lem:action-3} and~\ref{lem:trans-3} that for $t\le \kappa_N\eps^{-3+1/N}$ with a sufficiently small $\kappa_N$,
$$
| I(t)-I(0)|\le C_1\eps^3+C_2\, t\, \eps^{5-1/N}   \le \eps^2.
$$
This yields the bound of Theorem~\ref{thm:main-t-3}.

\section*{Acknowledgement} We thank the referee for indicating a possible alternative proof of the result for the one-dimensional case (Theorem~\ref{thm:main-t}) by using techniques of Hamiltonian perturbation theory as developed by Neishtadt and Bambusi \& Giorgilli.

\providecommand{\bysame}{\leavevmode\hbox to3em{\hrulefill}\thinspace}
\providecommand{\MR}{\relax\ifhmode\unskip\space\fi MR }
\providecommand{\MRhref}[2]{%
  \href{http://www.ams.org/mathscinet-getitem?mr=#1}{#2}
}
\providecommand{\href}[2]{#2}

\end{document}